\let\oldcap\cap
\let\cap\oldcap
\newcommand{\TitleWithUrl}[1]{\IfEmptyBibField{doi}%
	{\IfEmptyBibField{url}{\textit{#1}}%
		{\IfEmptyBibField{eprint}{\href {\BibField{url}}{\textit{#1}}}{\textit{#1}}}%
	}%
	{\href {https://doi.org/\BibField{doi}}{\textit{#1}}}}
\renewcommand{\eprint}[1]{\IfEmptyBibField{url}{\url{#1}}%
	{\href {\BibField{url}}{#1}}}
\title{Loop Weierstrass Representation}
\author{Thomas Raujouan \and Nick Schmitt \and Jonas Ziefle}
\date{}
\newcommand{\Rr}{\mathbb{R}}
\newcommand{\Zz}{\mathbb{Z}}
\newcommand{\Cc}{\mathbb{C}}
\newcommand{\Hh}{\mathbb{H}}
\newcommand{\Ss}{\mathbb{S}}
\newcommand{\Nn}{\mathbb{N}}
\newcommand{\Qq}{\mathbb{Q}}
\newcommand{\lL}{\mathcal{L}}
\newcommand{\xX}{\mathcal{X}}
\newcommand{\jJ}{\mathcal{J}}
\newcommand{\Ee}{\mathbb{E}}
\newcommand{\CP}{\Cc P}
\newcommand{\I}{\mathrm{I}}
\newcommand{\tr}{\mathop\mathrm{tr}}
\newcommand{\SL}{\mathrm{SL}}
\newcommand{\GL}{\mathrm{GL}}
\newcommand{\SU}{\mathrm{SU}}
\newcommand{\SO}{\mathrm{SO}}
\newcommand{\su}{\mathrm{su}}
\newcommand{\la}{\lambda}
\newcommand{\inv}{^{-1}}
\renewcommand{\bar}[1]{{\overline{#1}}}
\newcommand{\wW}{\mathcal{W}}
\newcommand{\vV}{\mathcal{V}}
\newcommand{\hH}{\mathcal{H}}
\newcommand{\rR}{\mathcal{R}}
\newcommand{\sS}{\mathcal{S}}
\newcommand{\qQ}{\mathcal{Q}}
\newcommand{\Xx}{\mathbb{X}}
\newcommand{\Iso}{\mathrm{Iso}}
\newcommand{\SF}{\mathrm{SF}}
\newcommand{\RP}{\Rr\mathrm{P}}
\newcommand{\scal}[1]{\left\langle #1 \right\rangle}
\newcommand{\norm}[1]{\lVert #1 \rVert}
\newcommand{\abs}[1]{\left|#1\right|}
\def\Re{\mathop{\rm Re}}
\newcommand{\Ker}{\mathop{\mathrm{Ker}}}
\newcommand{\tendsto}[1]{\mathop{\longrightarrow}_{#1}}
\newcommand{\Dd}{\mathbb{D}}
\newcommand{\zbar}{{\bar{z}}}
\newcommand{\tT}{\mathcal{T}}
\newcommand{\ord}{\mathop{\mathrm{ord}}}
\newcommand{\St}{{\mathrm{St}}}
\newcommand{\spann}{\mathop\mathrm{span}}
\newcommand{\LWR}{\mathop{\mathrm{LWR}}}
\newcommand{\T}{\mathrm{T}}
\newcommand*{\rom}[1]{\expandafter\@slowromancap\romannumeral #1@}
\renewcommand{\matrix}[4]{
	\begin{pmatrix}
		#1 & #2 \\ #3 & #4
 	\end{pmatrix}
 }
\newcommand{\smatrix}[4]{
	\big(\begin{smallmatrix}
		#1 & #2 \\ #3 & #4
	\end{smallmatrix}\big)
}
\newcommand{\vectorr}[2]{
	\begin{pmatrix}
		#1 \\ #2 
	\end{pmatrix}
}
\newtheorem{theorem}{Theorem}[section]
\newtheorem{lemma}[theorem]{Lemma}
\newtheorem{proposition}[theorem]{Proposition}
\newtheorem{corollary}[theorem]{Corollary}
\theoremstyle{definition}
\newtheorem{remark}[theorem]{Remark}
\newtheorem{definition}[theorem]{Definition}
\newtheorem{example}[theorem]{Example}
\numberwithin{equation}{section}
\normalfont\fontsize{12}{17}\sffamily\bfseries}
\normalfont\fontsize{10}{14}\sffamily\bfseries}
\normalfont\fontsize{10}{14}\sffamily}
\begin{document}

\renewcommand{\numberline}[1]{#1~}
\hypertarget{toc}{}
\setcounter{tocdepth}{1}


\begin{abstract}
	We introduce the Loop Weierstrass Representation for minimal surfaces in Euclidean space and constant mean curvature 1 surfaces in hyperbolic space by applying integral system methods to the Weierstrass and Bryant representations. We unify associated families, dual surfaces and Goursat transformations under the same holomorphic data, we introduce a simple factor dressing for minimal surfaces, and we compute and classify various examples.
\end{abstract}
\maketitle


\section*{Introduction}

Minimal surfaces in Euclidean $3$-space and constant mean curvature $1$ (CMC 1) surfaces in hyperbolic $3$-space can be produced from holomorphic data by the Weierstrass and Bryant representations, respectively~\cite{weierstrass, bryant1987}. In this article, we introduce the Loop Weierstrass Representation (LWR), a single framework that allows for the construction of both types of surfaces. It recovers the Weierstrass and Bryant representations as special cases of a wider class of holomorphic representations that arise by introducing a loop  parameter (originating from associated families) and a gauge freedom in the holomorphic data. 
Consequently, all questions posed for the Weierstrass and the Bryant representations -- such as closing, end behavior, symmetries, total curvature -- can be covered in the single framework of the LWR. This unification leads to some structural insights, technical advantages and new aspects that we expose in the present paper:

\begin{itemize}
	\item The LWR produces various families of related surfaces from the same holomorphic data. It covers the classical associated family and extends it to a 4-real-parameter family (figure~\ref{fig:dual-assoc-cat}). It also covers the Goursat transformations \cite{Hertrich-Jeromin_2003} as well as the dual CMC 1 surfaces introduced by Umehara and Yamada \cite{umehara-yamada1997}. In the LWR, all these transformations arise after solving one single ODE, whether the target space is Euclidean or hyperbolic.
	\item The LWR provides a suitable framework for simple factor dressing, an integrable system transformation that produces new surfaces from old ones. It adds planar or horospherical ends to a given surface while preserving its periods (figures~\ref{fig:dressed-catenoids-R3},~\ref{fig:dressed-catenoid2},~\ref{fig:dressed-catenoid2H3} and~\ref{fig:dressed-catenoidR31plane}).
	\item In the LWR, catenoids and surfaces with similar ends can be constructed from potentials that are locally gauge equivalent to potentials with simple poles. 
	This brings the theory of Fuchsian systems into the construction of minimal and CMC~1 surfaces, a strategy that has been fruitful for the DPW representation~\cite{dpw}.
	\item An LWR potential for a surface with symmetries can be pushed down by a covering map to a potential on a simpler surface. For example, $n$-noids with Platonic symmetries can be constructed with an LWR potential on a three-punctured $\CP^1$, and the Schwarz $P$-surface on a four-punctured $\CP^1$.
\end{itemize}

\begin{figure}[h]
	\centering
	\begin{subfigure}{0.22\textwidth}
		\includegraphics[width=\linewidth]{./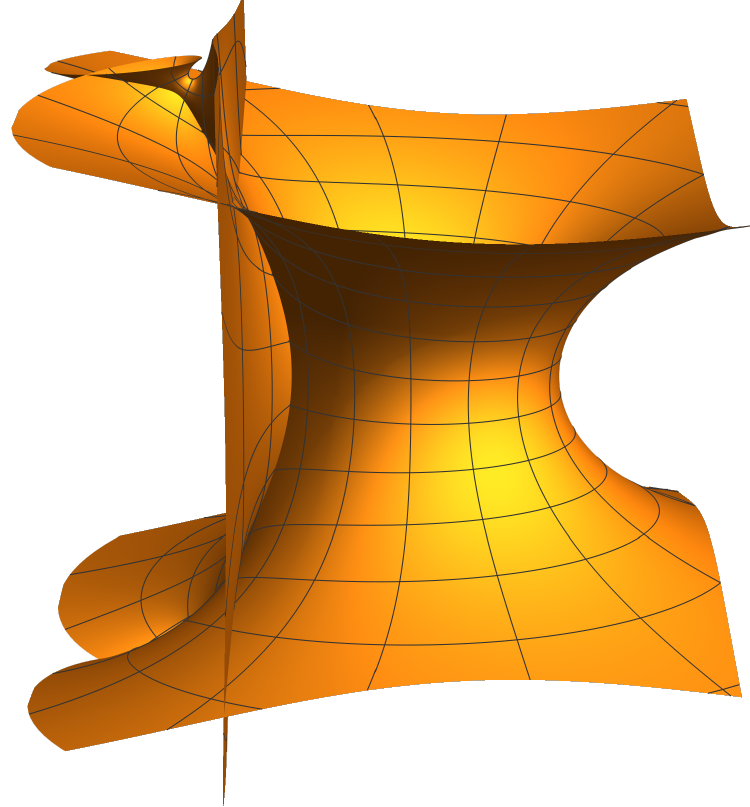}
		\caption{Side}
		\label{fig:mesh1}
	\end{subfigure}
	\hfill
	\begin{subfigure}{0.22\textwidth}
		\includegraphics[width=\linewidth]{./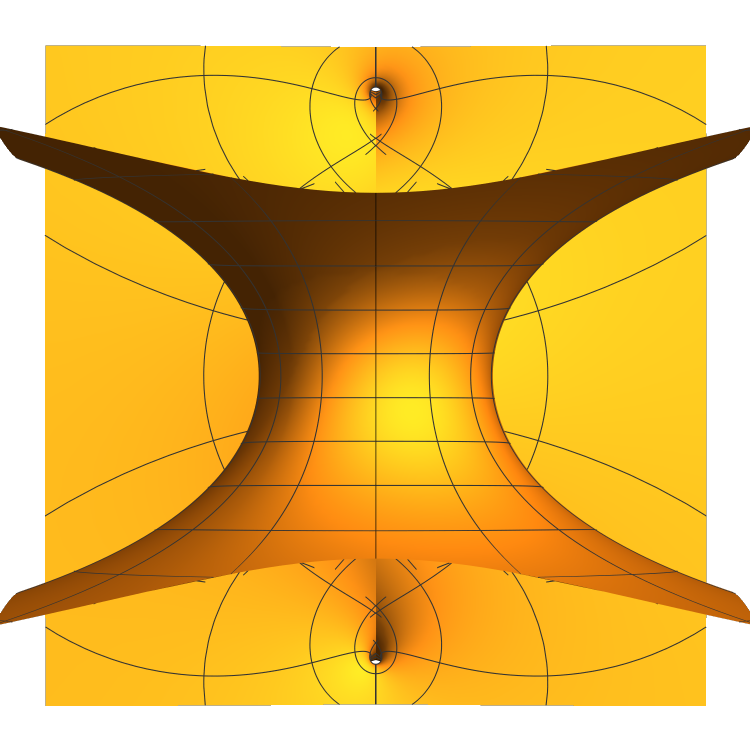}
		\caption{Front}
		\label{fig:mesh2}
	\end{subfigure}
	\hfill
	\begin{subfigure}{0.22\textwidth}
		\includegraphics[width=\linewidth]{./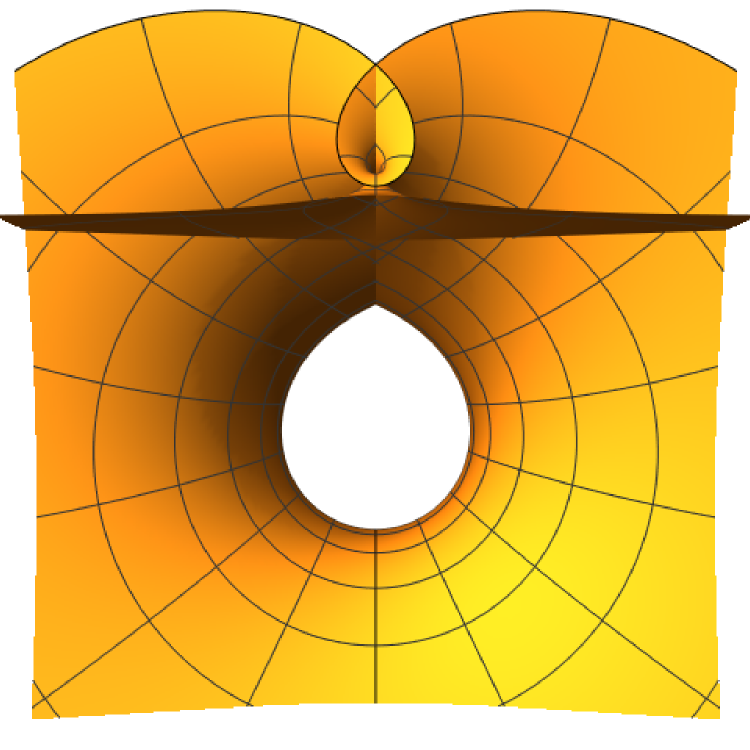}
		\caption{Top}
		\label{fig:mesh3}
	\end{subfigure}
	\hfill
	\begin{subfigure}{0.22\textwidth}
		\includegraphics[width=\linewidth]{./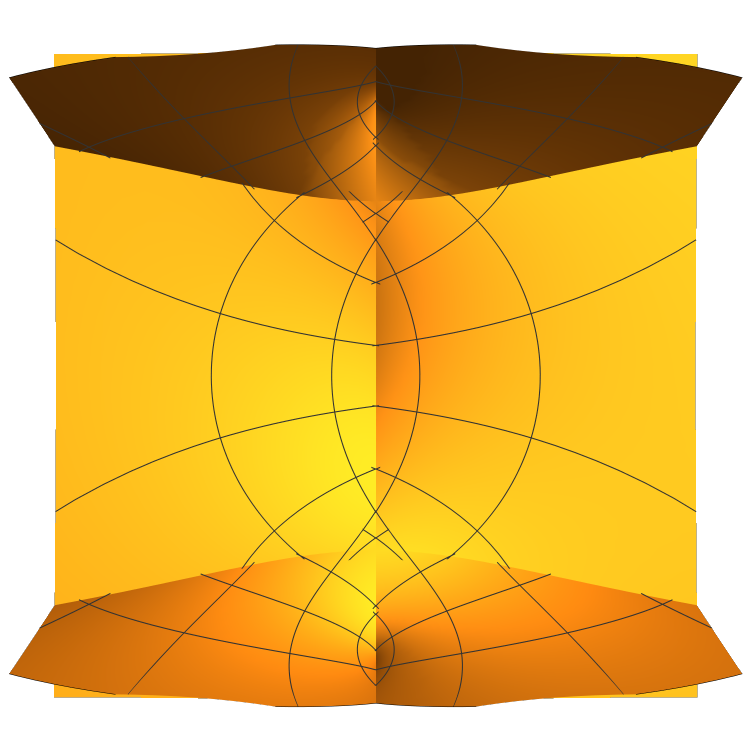}
		\caption{Back}
		\label{fig:mesh2}
	\end{subfigure}
	\caption{Simple factor dressing of the doubly-wrapped catenoid as in example~\ref{ex:dressed-catenoid1plane} with $(p,q)=(1,1)$ and $(u,\ell)=(\frac{1}{2},1)$.}
	\label{fig:dressed-catenoids-R3}
\end{figure}

\subsection*{Holomorphic Representation of Surfaces}

The representation of surfaces as immersions is based on their first and second fundamental forms, encoding the intrinsic and extrinsic geometry of the surface, respectively. The compatibility of the fundamental forms is given by the Gauss-Codazzi equations. By Bonnet's theorem, immersions can locally be constructed, provided the Gauss-Codazzi equations are satisfied.

When studying isometric constant mean curvature $H$ (CMC $H$) immersions of surfaces into spaces of constant sectional curvature $k$, the Lawson correspondence~\cite{lawson1970} states that the Gauss-Codazzi equations for two immersions are the same if the quantity $H^2+k$ is the same for both. Thereby minimal (i.e. CMC~$0$) surfaces in Euclidean space $\mathbb{E}^3$ (i.e. $k=0$) correspond to CMC~$1$ surfaces in hyperbolic space $\mathbb{H}^3$ (i.e. $k=-1$).
In the Weierstrass and Bryant representations, the correspondence becomes locally explicit in terms of an $\mathrm{sl}_2\Cc$-valued holomorphic one-form $\eta$ of rank~$1$ that encodes the first and second fundamental forms. 
Considering two different integration procedures for $\eta$ yields two different holomorphic null-curves. One represents a minimal immersion in $\mathbb{E}^3$ and is obtained by solving the ODE $d\psi = \eta$ for $\psi$ in $\mathrm{sl}_2\Cc$, the other one gives a CMC $1$ immersion in $\mathbb{H}^3$and is obtained by solving the ODE $d \Psi = \Psi \eta$ for $\Psi$ in $\SL_2\Cc$.

\subsection*{Motivation for the LWR}

In the following we explain how the LWR appears as a natural extension of the Weierstrass and Bryant representation by introducing an associate family parameter and a gauge freedom. 

Firstly, we note that scaling the Hopf differential of a CMC $1$ immersion in $\mathbb{H}^3$ by a complex parameter $\la$ and the metric by $\abs{\la}^2$ gives new solutions of the Gauss-Codazzi equation. The corresponding family of CMC $1$ immersions is called an associated family.
For the Bryant representation this amounts to scaling the potential
\begin{equation*}
	\eta \longmapsto \zeta_\lambda := \lambda \eta
\end{equation*}
and considering for the associated family of null-curves the ODE 
\begin{equation*}
	d \Psi_\lambda  = \Psi_\lambda  \zeta_\lambda.
\end{equation*}

Secondly, we observe that gauging the null-curve by an $\SL_2\Cc$-valued  holomorphic function $z \mapsto g(z)$, that is 
\begin{equation}\label{eq:Psi-lambda}
	\Psi_\la \longmapsto \Phi_\la := \Psi_\la g,
\end{equation}
 changes the potential by
\begin{equation*}
	\zeta_\la \longmapsto \xi_\la := g^{-1} \zeta_\la g +g^{-1} dg .
\end{equation*}
We thus arrive at the ODE
\begin{equation*}
	d \Phi_\lambda = \Phi_\lambda \xi_\lambda
\end{equation*}
for a potential $\xi_\lambda := \lambda \alpha +\beta$ with $\alpha$ an $\mathrm{sl}_2\Cc$-valued holomorphic one-form of rank one. 
Provided that the initial value for $\Psi_0$ has been set to the identity, then $\Phi_0 = g$ and, from \eqref{eq:Psi-lambda}, one can read off the null curve $\Psi$ as 
\begin{equation*}
	 \Psi = \Phi_1 \Phi_0^{-1}.
\end{equation*}
Furthermore, any frame of the form ${\Phi_{\la_1}\Phi_{\la_0}\inv}$ is a holomorphic null-curve and therefore produces a CMC $1$ surface.
We call this representation of CMC $1$ immersions the ``Loop Weierstrass Representation'' (LWR) and $\Phi_\lambda$ the ``LWR frame''.
The LWR possesses a gauge freedom since gauging the LWR frame by $\Phi_\lambda \mapsto \Phi_\lambda h$ leaves the null-curve unchanged. 

Null-curves $\psi$ for minimal surfaces in Euclidean space can be obtained from the LWR frame $\Phi_\lambda$ by taking a logarithmic derivative
\begin{equation*}
	\psi := (\dot{\Phi} \Phi^{-1})_{\mid \la=\la_0} ,
\end{equation*}
where the partial derivative with respect to $\lambda$ is denoted by the dot. This is reminicent of the transition from the Lie group $\SL_2\Cc$ to the Lie algebra $\mathrm{sl}_2\Cc$. As an interpretation for this formula, minimal surfaces in Euclidean space can be seen as a blow-up limit of CMC $1$ surfaces in hyperbolic space (Proposition~\ref{proposition:blow-up}).

\subsection*{Structure of the Paper}

\begin{itemize}
	\item Section~\ref{sec:lwr} provides the background for studying minimal immersions in Euclidean space and CMC $1$ immersions in hyperbolic space and introduces LWR. 
	\item In section~\ref{sec:gauging}, gauge freedom for the LWR is explained and used in order to obtain two standard forms for LWR potentials.
	\item As a follow up, section~\ref{sec:local-description} focuses on the local geometric data of surfaces in terms of the LWR, that is the first and second fundamental forms and the Gauss map.
	\item Section~\ref{sec:related-surfaces} studies how a surface changes when changing the loop parameters $(\la_0,\la_1)$ and the initial condition of the LWR frame. This yields associated families, rigid motions and as a new aspect, we see that holomorphic dressing induces Goursat transformation.
	\item The global aspects of closing a surface in the LWR and surfaces with symmetry are discussed in section~\ref{sec:symmetries}.
	\item In section~\ref{sec:examples} we describe surfaces whose first fundamental form is invariant under rotations. This provides a classification of the well-known examples of Enneper surfaces and catenoids.
	\item Section~\ref{sec:noids} is dedicated to $n$-noids and the classification of irreducible trinoids is recovered in the context of the LWR and Fuchsian systems.
	\item Finally, in section~\ref{sec:simple-factor-dressing}, we introduce simple factor dressing for minimal surfaces in Euclidean space and CMC $1$ surfaces in hyperbolic space. Note that simple factor dressing is treated without recourse to the Birkhoff splitting.
\end{itemize}

\tableofcontents


\section{The Loop Weierstrass Representation}\label{sec:lwr}

In this section, we introduce the Loop Weierstrass Representation (LWR) for constructing minimal immersions into Euclidean space and CMC $1$ immersions into hyperbolic space. LWR relies on the Lie group $\SL_2\Cc$ and its Lie algebra $\mathrm{sl}_2\Cc$, so we first recall in section~\ref{sec:ambient-spaces} how to describe Euclidean and hyperbolic spaces in a matrix model. We recall some differential geometry definitions in Section~\ref{sec:null-curves} and state the constructions of Weierstrass~\cite{weierstrass} and Bryant~\cite{bryant1987} that produce minimal and CMC $1$ surfaces out of holomorphic null curves. In Section~\ref{sec:lwr-null-curves}, we come to the LWR itself and show how it can build families of null curves from a given holomorphic connection, and can thus induce families of minimal and CMC $1$ surfaces.

\subsection{Ambient spaces}\label{sec:ambient-spaces}

Minkowski space $\Rr^{1,3}$ is identified with the real 4-dimensional vector space $\hH_2$ of 2-by-2 Hermitian matrices equipped with the Lorentzian inner product
\begin{equation}\label{eq:lorentzian-inner-product}
	\scal{x,y} := -\frac{1}{2}\tr(x\widehat{y}),\quad x,y\in\hH_2
\end{equation}
where the hat denotes the adjugate matrix, i.e.:
\begin{equation}\label{eq:adjugate}
	\widehat{x} = \matrix{d}{-b}{-c}{a} \quad \text{if}\quad x = \matrix{a}{b}{c}{d}.
\end{equation}
The Lorentzian norm is then given by the determinant:
\begin{equation}\label{eq:norm}
	\norm{x}^2 = -\det x.
\end{equation}
Under this identification, hyperbolic space is
\begin{equation*}
	\Hh^3 = \left\{x\in\hH_2\mid \det x = 1, \quad \tr x >0\right\} = \left\{FF^*\mid F\in\SL_2\Cc\right\},
\end{equation*}
and Euclidean space is identified with the tangent space of $\Hh^3$ at the identity matrix $\I$, which is nothing else but the trace-free 2-by-2 Hermitian matrices:
\begin{equation*}
	\Ee^3 = T_{\I}\Hh^3 = i\cdot \su_2.
\end{equation*}
An orientation of $\Ee^3$ is chosen so that $(x, y, x\times y)$ is positively oriented, where
\begin{equation}\label{eq:cross-product-E3}
	x\times y := -\frac{i}{2}\left[x,y\right],\quad x,y\in\Ee^3.
\end{equation} 
The group $\SL_2\Cc$ acts isometrically and transitively on $\Hh^3$ via
\begin{equation}\label{eq:isometries-H3}
	x \mapsto VxV^*
\end{equation}
where $V\in\SL_2\Cc$ and $x\in\Hh^3$. This action extends the orientation from $\Ee^3=T_{\I}\Hh^3$ to the entire $\Hh^3$.
The group $\SU_2\ltimes\Ee^3$ acts isometrically and transitively on $\Ee^3$ via
\begin{equation*}
	x \mapsto UxU\inv + T
\end{equation*}
where $U\in\SU_2$, $T\in \Ee^3$ and $x\in\Ee^3$.

\subsection{Null curves and minimal or CMC $1$ immersions}\label{sec:null-curves}

For a given function $f$ defined on a Riemann surface $\Sigma$ with local coordinate $z=x+i y$, we write  $f_x$, $f_y$, $f_z$, and $f_\bar{z}$ for the $x$, $y$, $z$ and $\bar{z}$ derivatives of $f$, respectively. In this paper, all the maps defined on a Riemann surface $\Sigma$ are assumed to be real-analytic, but not necessarily holomorphic. Let $\Xx^3$ denote Euclidean space $\Ee^3$ or hyperbolic space $\Hh^3$.

\begin{definition}
	An immersion $f\colon\Sigma\to\Xx^3$ is \textbf{conformal} if $\scal{f_z,f_z}=0$. In such case, the first fundamental form (or \textbf{metric}) reads
	\begin{equation}\label{eq:metric}
		ds^2 := \scal{df,df} = 2\scal{f_z,f_\zbar}\abs{dz}^2.
	\end{equation}

	The \textbf{unit normal} vector $N$ of a conformal immersion $f$ at a point $p = f(z_0)$ is the unit vector such that $(f_x,f_y,N)$ is a positively oriented orthogonal basis of $\T_p\Xx^3$.
	
	The \textbf{Hopf differential} $Qdz^2$ and the \textbf{mean curvature} $H$ of a conformal immersion at a point are defined via the second fundamental form:
	\begin{equation}\label{eq:definition-hopf}
		-\scal{df,dN} =: Qdz^2 + Hds^2 + \bar{Q}d\bar{z}^2.
	\end{equation}
	A conformal immersion into Euclidean space is \textbf{minimal} if $H\equiv 0$. 
	A conformal immersion into hyperbolic space is \textbf{CMC $1$} if $H\equiv 1$. 
	An \textbf{umbilic} of $f$ is a point $z_0\in\Sigma$ such that $Q(z_0)=0$. 
	If $H$ is constant and $Q$ is constantly vanishing, the immersion is \textbf{flat}.
	The \textbf{Gauss-Codazzi equations} for minimal surfaces in Euclidean space and CMC $1$ surfaces in hyperbolic space are the same. Writing $ds^2 = 4e^{2\omega}\abs{dz}^2$, they are
	\begin{equation}\label{eq:gauss-codazzi}
		\abs{Q}^2 = 4e^{2\omega}\omega_{z\zbar} ,\quad Q_\bar{z}=0.
	\end{equation}
\end{definition}

\begin{definition}\label{definition:null-curve}
	A map $\psi\colon\Sigma\to\mathrm{sl}_2\Cc$ (resp. $\Psi\colon\Sigma\to\SL_2\Cc$) is a \textbf{holomorphic null curve} if it is holomorphic with nowhere vanishing differential, and if $\det d\psi$ (resp. $\det d\Psi$) is constantly vanishing.
\end{definition}

\begin{theorem}[Weierstrass~\cite{weierstrass}]\label{theorem:weierstrass}
	Let $\psi$ be a holomorphic null curve into $\mathrm{sl}_2\Cc$ . Then $\psi + \psi^*$ is a conformal, minimal immersion into Euclidean space. Moreover, any conformal, minimal immersion can locally be obtained this way.
\end{theorem}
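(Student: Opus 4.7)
The plan is to verify, in order, that $f:=\psi+\psi^*$ is $\Ee^3$-valued, conformal, an immersion, and has $H\equiv 0$, and then to prove the local converse by building a canonical $\mathrm{sl}_2\Cc$-valued holomorphic 1-form out of any given conformal minimal $f$ and integrating it on a simply-connected patch.

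\textbf{Forward direction.} Since $\tr\psi=0$ and $f=f^*$, one has $f\in i\cdot\su_2=\Ee^3$. Holomorphy of $\psi$ yields $f_z=\psi_z$ and $f_\zbar=\psi_z^*$; extending \eqref{eq:lorentzian-inner-product} $\Cc$-bilinearly and using $A\widehat{A}=\det(A)I$ gives $\scal{f_z,f_z}=-\det\psi_z$, which vanishes by the null condition $\det d\psi=0$, so $f$ is conformal. For the immersion property, the Cayley--Hamilton identity $\widehat{A}=(\tr A)I-A$ applied to the trace-free $\psi_z^*$ gives $\widehat{\psi_z^*}=-\psi_z^*$, so $\scal{f_z,f_\zbar}$ simplifies to $\tfrac12\tr(\psi_z\psi_z^*)=\tfrac12\sum_{i,j}\abs{(\psi_z)_{ij}}^2$, which is strictly positive since $d\psi$ is nowhere vanishing; by \eqref{eq:metric} this is the conformal factor of $ds^2$. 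Minimality then follows from $f_{z\zbar}=(\psi_z)_\zbar=0$ together with the standard fact that, for a conformal immersion into a Riemannian 3-space, $f_{z\zbar}$ is normal (differentiate $\scal{f_z,f_z}=0$ in $\zbar$) and is a positive multiple of $H N$.

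\textbf{Local converse.} Given a conformal minimal $f\colon\Sigma\to\Ee^3$, set $\alpha:=f_z\,dz$. It is $\mathrm{sl}_2\Cc$-valued by trace-freeness of $f$, holomorphic by minimality (equivalently $f_{z\zbar}=0$), and automatically closed, hence exact on any simply-connected chart: write $\alpha=d\psi$ with $\psi$ holomorphic and trace-free. Then $d\psi\neq 0$ because $f$ is an immersion, and $\det d\psi=0$ by conformality, so $\psi$ is a holomorphic null curve in the sense of Definition~\ref{definition:null-curve}. Using $(f_z)^*=f_\zbar$, which follows from Hermiticity of $f$, one checks that $\psi+\psi^*$ and $f$ have the same partial derivatives, hence differ by a constant trace-free Hermitian matrix that is absorbed by shifting the constant of integration in $\psi$.

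\textbf{Main obstacle.} Arguably the minimality step, which requires invoking the general identification of $f_{z\zbar}$ with the mean curvature vector and verifying that this identification carries over verbatim to the matrix model $\Ee^3=i\cdot\su_2$ with the inner product \eqref{eq:lorentzian-inner-product}; once this is in place everything else is bookkeeping with $\widehat{A}=(\tr A)I-A$.
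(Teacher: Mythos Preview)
The paper does not actually prove this theorem: it is stated as the classical Weierstrass representation, with a citation, and used as input to the LWR machinery. So there is no ``paper's own proof'' to compare against.

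Your argument is correct. The forward direction is clean: the key identities $f_z=\psi_z$, $f_\zbar=\psi_z^*$, $\widehat{A}=-A$ for trace-free $A$, and $f_{z\zbar}=0$ do exactly what you claim, and the identification of $f_{z\zbar}$ with $H\scal{f_z,f_\zbar}N$ is standard for conformal immersions into flat space and needs no modification in the matrix model since only the inner product \eqref{eq:lorentzian-inner-product} enters. The converse is also fine; the only small point worth making explicit is that the constant discrepancy $T\in\Ee^3$ can always be absorbed because $T=T^*$ lets you take $c=T/2\in\mathrm{sl}_2\Cc$ with $c+c^*=T$.
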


\begin{theorem}[Bryant~\cite{bryant1987}]\label{theorem:bryant}
	Let $\Psi$ be a holomorphic null curve into $\SL_2\Cc$. Then $\Psi^*\Psi$ is a conformal, CMC $1$ immersion into hyperbolic space. Moreover, any conformal, {CMC~1} immersion can locally be obtained this way.
\end{theorem}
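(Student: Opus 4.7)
The plan is to follow the strategy of Theorem~\ref{theorem:weierstrass}, with the multiplicative group $\SL_2\Cc$ and the quadratic form $V\mapsto V^*V$ playing the roles of the Lie algebra $\mathrm{sl}_2\Cc$ and the additive expression $\psi\mapsto\psi+\psi^*$.

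For the forward direction, I first check that $f := \Psi^*\Psi$ lands in $\Hh^3$: it is Hermitian, $\det f = |\det\Psi|^2 = 1$, and positive definite since $\langle fv, v\rangle = \norm{\Psi v}^2 > 0$ for $v\neq 0$, so in particular $\tr f > 0$. Holomorphicity of $\Psi$ gives $d\Psi = \Psi_z\,dz$ and $d\Psi^* = \Psi_z^*\,d\bar z$, whence $f_z = \Psi^*\Psi_z$ and $f_{\bar z} = f_z^*$. The null condition $\det\Psi_z = 0$ together with $\det\Psi^* = 1$ yields $\det f_z = 0$, so by \eqref{eq:norm}, $\scal{f_z,f_z} = -\det f_z = 0$, proving conformality. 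Using the Cayley--Hamilton identity $\widehat M = \tr(M)I - M$ one rewrites $\scal{X,Y} = \tfrac12(\tr(XY) - \tr X\,\tr Y)$; Cauchy--Schwarz applied to the rank-one matrix $f_z$ then gives $\scal{f_z, f_{\bar z}} = \tfrac12(\norm{f_z}_{\mathrm{HS}}^2 - \abs{\tr f_z}^2) \geq 0$, so the metric $ds^2 = 2\scal{f_z,f_{\bar z}}\abs{dz}^2$ is Riemannian wherever $f$ is immersive.

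The core step is $H\equiv 1$. I introduce the Maurer--Cartan form $\omega := \Psi^{-1}d\Psi = \smatrix{a}{b}{c}{-a}\,dz$, which is trace-free (from $\det\Psi = 1$) and has vanishing determinant, i.e., $a^2 + bc = 0$ (null condition). The $\SL_2\Cc$-action $V\mapsto\Psi^*V\Psi$ is isometric on $\Hh^3$ and sends $\I$ to $f$, so a unit normal $N_0$ at $\I$ transports to the unit normal $N = \Psi^*N_0\Psi$ at $f$, provided $N_0$ is chosen orthogonal to the (rank-one) image of $\omega$ at the base point. Computing $-\scal{df,dN}$ with this explicit $\omega$ and matching the decomposition \eqref{eq:definition-hopf} then reads off the Hopf differential $Q\,dz^2$ and $H=1$, the latter being forced by the null relation $a^2 + bc = 0$.

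For the converse, given a conformal CMC $1$ immersion $f\colon\Sigma\to\Hh^3$, I locally factor $f = B^*B$ with $B\colon\Sigma\to\SL_2\Cc$ smooth (unique up to $B\mapsto UB$ for $U\colon\Sigma\to\SU_2$ smooth). The Gauss--Codazzi equations \eqref{eq:gauss-codazzi} for CMC $1$ provide the integrability condition needed to choose $U$ so that $\Psi := UB$ satisfies $\Psi_{\bar z}=0$, i.e., is holomorphic---essentially a Ruh--Vilms-type assertion for CMC $1$ in $\Hh^3$. Then $\Psi^*\Psi = B^*U^*UB = B^*B = f$, and $\det d\Psi = 0$ follows from the rank-one structure of the Hopf differential, making $\Psi$ a holomorphic null curve representing $f$. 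The main obstacle is the $H=1$ verification, which requires a careful identification of $N_0$ from the rank-one structure of $\omega$ and uses the null relation $a^2+bc=0$; the converse is, structurally, the recognition of the Gauss--Codazzi equations as the flatness of a specific $\mathrm{sl}_2\Cc$-valued connection built from the fundamental forms.
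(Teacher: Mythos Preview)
The paper does not prove this theorem: it is stated as background and attributed to Bryant~\cite{bryant1987}, with no argument supplied. So there is no paper proof to compare against; I can only assess your sketch on its own.

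Your outline is sound and follows the standard route. A few points where it remains a plan rather than a proof:

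\begin{itemize}
\item \emph{Immersion.} Your Cauchy--Schwarz argument gives $\scal{f_z,f_{\bar z}}\geq 0$ but not strict positivity. Equality would occur precisely when the two factors of the rank-one matrix $f_z$ are parallel; you need to rule this out using $\Psi_z\neq 0$. One clean way (implicit in the paper's later computations, Theorem~\ref{theorem:fundamental-forms}) is to write $\Psi_z\Psi^{-1}=ww^\perp$ for a nonzero spinor $w$ and compute $\scal{f_z,f_{\bar z}}=\tfrac12\norm{w}^4>0$ directly.
\item \emph{The $H\equiv 1$ step.} You correctly identify this as the crux and correctly set it up via the transported normal $N=\Psi^*N_0\Psi$, but you do not actually carry out the computation. ``Reads off $H=1$'' is the whole theorem; the null relation $a^2+bc=0$ must be used explicitly at this point, and you have not shown how.
\item \emph{Converse.} Your factorization $f=B^*B$ and the search for $U\colon\Sigma\to\SU_2$ making $UB$ holomorphic is exactly the right structure. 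But the sentence ``the Gauss--Codazzi equations provide the integrability condition needed'' is an assertion, not an argument: one has to write down the $\su_2$-valued $1$-form whose flatness is equivalent to \eqref{eq:gauss-codazzi} and then integrate it. This is the substantive content of Bryant's original proof, and it is not done here.
\end{itemize}

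In short: the architecture is correct, but both nontrivial computations---$H=1$ in the forward direction and the integrability/gauge argument in the converse---are named rather than executed.
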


\subsection{The Loop Weierstrass Representation}\label{sec:lwr-null-curves}

We first introduce the Loop Weierstrass Representation as an algorithm, and then show that it produces holomorphic null-curves (Lemma~\ref{lemma:null-curves}), and hence CMC $1$ surfaces in $\Hh^3$ and minimal surfaces in $\Ee^3$ (theorem~\ref{theorem:immersions}).

\begin{definition}\label{def:LWRpotential}
		An \textbf{LWR potential} $\xi = (\xi_\la)_{\la\in\Cc}$ is an affine linear family of meromorphic 1-forms on a Riemann surface $\Sigma$ with values in $\mathrm{sl}_2\Cc$, whose linear term is nilpotent. Locally, away from its poles, for all $\la\in\Cc$, it can be written as
		\begin{equation}\label{eq:xi=Alambda+B}
			\xi_\la = (A\la+B)dz
		\end{equation}
		where $z\in U \subset\Sigma$ is a local coordinate, $A\colon U \to\mathrm{sl}_2\Cc$ is holomorphic, nilpotent, and $B\colon U \to\mathrm{sl}_2\Cc$ is holomorphic.
		
		An \textbf{LWR frame} $\Phi=(\Phi_\la)_{\la\in\Cc}$ is a holomorphic family of maps $\Phi_\la\colon\Sigma\to\SL_2\Cc$ such that $\Phi\inv d\Phi$ is an LWR potential.

	The \textbf{Loop Weierstrass Representation} (LWR) is the following algorithm.
	\begin{enumerate}
		\item Take an LWR potential $\xi$ defined on some Riemann surface $\Sigma$.  
		\item Let $\widetilde{\Sigma}$ be the universal cover of $\Sigma$ and solve for $\Phi_\la\colon\widetilde{\Sigma}\to\SL_2\Cc$ the following initial value problem:
		\begin{equation}\label{eq:cauchy-problem}
			\begin{cases}
				\Phi\inv d\Phi = \xi,\\
				\Phi_\la(z_0) = C_\la
			\end{cases}
		\end{equation}
		where $z_0\in\widetilde{\Sigma}$ and $C_\la\in\SL_2\Cc$ is holomorphic with respect to  $\la\in\Cc$.
		\item Let $\la_0\neq\la_1\in\Cc$ and define $\psi\colon\widetilde{\Sigma}\to \mathrm{sl}_2\Cc$ and $\Psi\colon\widetilde{\Sigma}\to\SL_2\Cc$ as
		\begin{equation}\label{eq:null-curves}
			\psi := (\la_1 - \la_0)(\dot{\Phi}\Phi\inv)_{\la_0},\quad \Psi := \Phi_{\la_1}\Phi_{\la_0}\inv
		\end{equation}
		where the dot denotes the partial derivative with respect to $\la$.
		\item Define $f^\Ee\colon\widetilde{\Sigma}\to\Ee^3$ and $f^\Hh\colon\widetilde{\Sigma}\to\Hh^3$ as
		\begin{equation}\label{eq:immersions}
			f^\Ee := \psi + \psi^*,\quad f^\Hh := \Psi^*\Psi.
		\end{equation}
	\end{enumerate}
	The input data $(\Sigma, \xi, \Phi, \la_0, \la_1)$ is called \textbf{LWR data}. The ordered pair $(\la_0,\la_1)$ are the \textbf{evaluation points}. We will sometimes write $f^\Xx = \LWR(\Sigma, \xi, \Phi, \la_0, \la_1)$ to denote the immersion induced by LWR.
\end{definition}

\begin{lemma}\label{lemma:null-curves}
	Fix $\la_0\neq\la_1\in\Cc$. For any LWR frame $\Phi$, the maps $\psi$ and $\Psi$ defined in \eqref{eq:null-curves} are holomorphic null curves (possibly branched).
	Moreover, any holomorphic null curve into $\mathrm{sl}_2\Cc$ or $\SL_2\Cc$ can locally be obtained this way.
\end{lemma}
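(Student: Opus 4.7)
The plan is to verify both directions by direct computation with the LWR ODE $d\Phi_\la = \Phi_\la(A\la+B)dz$.

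For the forward direction, write $\xi_\la=(A\la+B)dz$ with $A$ nilpotent. Using $\Phi_{\la_1}=\Psi\Phi_{\la_0}$, a direct computation from the ODE gives
\begin{equation*}
d\Psi \;=\; \Phi_{\la_1}A(\la_1-\la_0)\,dz\,\Phi_{\la_0}\inv \;=\; (\la_1-\la_0)\,\Psi\cdot\Ad(\Phi_{\la_0})A\,dz.
\end{equation*}
For $\psi$, differentiating the relation $\Phi\inv d\Phi = \xi$ with respect to $\la$ and rearranging yields $d(\dot\Phi\Phi\inv)=\Ad(\Phi)A\,dz$, hence
\begin{equation*}
d\psi \;=\; (\la_1-\la_0)\,\Ad(\Phi_{\la_0})A\,dz.
\end{equation*}
Since $A$ is nilpotent, so is $\Ad(\Phi_{\la_0})A$, forcing $\det d\Psi \equiv 0$ and $\det d\psi \equiv 0$; the ``possibly branched'' clause absorbs the zero locus of $A$. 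That $\psi$ takes values in $\mathrm{sl}_2\Cc$ follows from $\partial_\la\det\Phi_\la=0$, which gives $\tr(\dot\Phi\Phi\inv)=0$. Holomorphy in $z$ is immediate from $\xi$ being of type $(1,0)$, and $\dot\Phi$ is well-defined because the ODE and its initial condition depend holomorphically on $\la$.

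For the converse, let $\Psi\colon\Sigma\to\SL_2\Cc$ be a given null curve and set $\eta := \Psi\inv d\Psi$, which is $\mathrm{sl}_2\Cc$-valued (from $\det\Psi=1$) and nilpotent (from $\det d\Psi=0$). Writing $\eta=\tilde A\,dz$, define the LWR potential
\begin{equation*}
\xi_\la := \tfrac{\la-\la_0}{\la_1-\la_0}\,\tilde A\,dz,
\end{equation*}
which satisfies $\xi_{\la_0}=0$ and $\xi_{\la_1}=\eta$. Choose a holomorphic map $\la\mapsto C_\la$ in $\SL_2\Cc$ with $C_{\la_0}=\I$ and $C_{\la_1}=\Psi(z_0)$, and solve~\eqref{eq:cauchy-problem}. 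Then $\Phi_{\la_0}\equiv\I$, while $\Phi_{\la_1}$ satisfies the same ODE as $\Psi$ with the same value at $z_0$, so $\Phi_{\la_1}=\Psi$ by ODE uniqueness, and $\Phi_{\la_1}\Phi_{\la_0}\inv=\Psi$. For the $\mathrm{sl}_2\Cc$ case, define $\tilde A$ by $d\psi=\tilde A\,dz$, take the same $\xi_\la$, and choose $C_\la = \exp\bigl(\tfrac{\la-\la_0}{\la_1-\la_0}\,\psi(z_0)\bigr)$; then $\Phi_{\la_0}\equiv\I$ and $\dot C_{\la_0}=\psi(z_0)/(\la_1-\la_0)$, so the LWR reproduces both $d\psi$ and the value of $\psi$ at $z_0$.

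The one step needing a separate argument is the existence of a holomorphic map $\Cc\to\SL_2\Cc$ taking prescribed values at two distinct points; I would produce it using the polar decomposition $\Psi(z_0)=UP$ with $U=\exp X\in\SU_2$ for some $X\in\su_2$ and $P=\exp Y$ with $Y$ Hermitian traceless, then setting $C_\la=\exp(s(\la)X)\exp(s(\la)Y)$ for $s(\la)=(\la-\la_0)/(\la_1-\la_0)$. The remaining verifications are routine.
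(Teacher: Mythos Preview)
Your proof is correct and follows essentially the same route as the paper: the same forward computations yielding $d\psi=(\la_1-\la_0)\Phi_{\la_0}A\Phi_{\la_0}\inv dz$ and $d\Psi=(\la_1-\la_0)\Phi_{\la_1}A\Phi_{\la_0}\inv dz$, and the same choice of potential $\xi_\la=\tfrac{\la-\la_0}{\la_1-\la_0}\,\eta$ with the same exponential initial condition for the $\slfrak_2\Cc$ converse. The only difference is in the $\SL_2\Cc$ converse: the paper takes $C_\la=\exp\bigl(\alpha\log\Psi(z_0)\bigr)$ directly, whereas your polar-decomposition construction $C_\la=\exp(s(\la)X)\exp(s(\la)Y)$ avoids the minor issue that not every element of $\SL_2\Cc$ lies in the image of $\exp\colon\slfrak_2\Cc\to\SL_2\Cc$.
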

\begin{proof}
	Let $\xi$ be the LWR potential of $\Phi$. 
	We first show that $\psi$ is a holomorphic null curve. The LWR frame $\Phi$ is holomorphic, so $\psi$ is holomorphic.  Compute 
	\begin{align*}
		d(\dot{\Phi}\Phi\inv) &= d(\dot{\Phi})\Phi\inv - \dot{\Phi}\Phi\inv d{\Phi}\Phi\inv \\
		&= \frac{\partial \left(d\Phi\right)}{\partial \la}\Phi\inv - \dot{\Phi}\xi\Phi\inv\\
		&= \frac{\partial \left(\Phi \xi\right)}{\partial \la}\Phi\inv - \dot{\Phi}\xi\Phi\inv\\
		&= \Phi\dot{\xi}\Phi\inv.
	\end{align*}
	Therefore, with $\xi_\la = (A\la + B)dz$,
	\begin{equation}\label{eq:diff-null-curve-euclidean}
		d\psi = (\la_1-\la_0)\Phi_{\la_0}A\Phi_{\la_0}\inv dz.
	\end{equation}
	By definition of $\xi$, $\det A = 0$, so $\det d\psi = 0$. Similarly, $\Psi$ is holomorphic and
	\begin{equation}\label{eq:diff-null-curve-hyperbolic}
		d\Psi = (\la_1-\la_0)\Phi_{\la_1} A \Phi_{\la_0}\inv dz
	\end{equation} 
	so $\Psi$ is a holomorphic null curve.
	
	We now prove that all null curves can be obtained this way. Let $\psi\colon D\to\mathrm{sl}_2\Cc$ be a holomorphic null curve from a simply-connected neighborhood $D$ of $z_0\in\Cc$. Let $\alpha_\la = \frac{\la-\la_0}{\la_1-\la_0}$ and let $\xi_\la = \alpha_\la d\psi$. The potential $\xi$ is an LWR potential on $D$ because $\psi$ is a holomorphic null curve and $\alpha$ is linear in $\la$. 
	Define the LWR frame $\Phi$ on $D$ as the unique solution of $\Phi\inv d\Phi = \xi$ with $\Phi({z}_0) = \exp\left(\alpha\psi(z_0)\right)$. Let $X = (\la_1-\la_0)(\dot{\Phi}\Phi\inv)_{\la_0}$. A computation gives $X({z}_0) = \psi(z_0)$ because $\dot{\alpha} = (\la_1-\la_0)\inv$. Moreover, $\alpha_{\la_0}= 0$, so $X_z = \psi_z$. By the Picard-Lindelöf Theorem, $\psi = X$. 
	The case of $\Psi\colon D\to\SL_2\Cc$ is covered similarly with $\xi = \alpha \Psi\inv d\Psi$ and $\Phi({z}_0) = \exp\left(\alpha \log\Psi(z_0)\right)$, computing that $\Phi_{\la_0}=\I$ and $\Phi_{\la_1} = \Psi$.
\end{proof}

As a direct consequence of theorems~\ref{theorem:weierstrass},~\ref{theorem:bryant} and lemma~\ref{lemma:null-curves}, we have:
\begin{theorem}\label{theorem:immersions}
	Fix $\la_0\neq \la_1\in\Cc$. For any LWR frame $\Phi$, the maps $f^\Ee$ and $f^\Hh$ defined in \eqref{eq:immersions} are  minimal and CMC 1 conformal immersions, respectively.
	Moreover, any conformal, minimal (resp. CMC 1) immersion into $\Ee^3$ (resp. $\Hh^3$) can locally be obtained this way.
\end{theorem}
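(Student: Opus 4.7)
The plan is to deduce the theorem by straightforward composition of the three prerequisite results: Theorems \ref{theorem:weierstrass} and \ref{theorem:bryant} together with Lemma \ref{lemma:null-curves}. Since the lemma has already expressed $\psi$ and $\Psi$ as holomorphic null curves (and identified every null curve with such a construction), nothing substantial remains except to apply the classical Weierstrass and Bryant results and read off both implications.

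For the forward direction I would fix an LWR frame $\Phi$ with potential $\xi_\la=(A\la+B)dz$. Lemma \ref{lemma:null-curves} then asserts that $\psi=(\la_1-\la_0)(\dot\Phi\Phi\inv)_{\la_0}$ and $\Psi=\Phi_{\la_1}\Phi_{\la_0}\inv$ are holomorphic null curves into $\slfrak_2\Cc$ and $\SL_2\Cc$, with differentials described by \eqref{eq:diff-null-curve-euclidean} and \eqref{eq:diff-null-curve-hyperbolic}. On the open locus where $A$ is nonzero, these differentials do not vanish, so Definition \ref{definition:null-curve} is genuinely satisfied. Theorem \ref{theorem:weierstrass} then gives that $f^\Ee=\psi+\psi^*$ is a conformal minimal immersion into $\Ee^3$, and Theorem \ref{theorem:bryant} gives that $f^\Hh=\Psi^*\Psi$ is a conformal CMC $1$ immersion into $\Hh^3$.

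For the converse I would start from any conformal minimal immersion $f\colon\Sigma\to\Ee^3$. Restricting to a simply-connected patch $D\subset\Sigma$, Theorem \ref{theorem:weierstrass} produces a holomorphic null curve $\psi\colon D\to\slfrak_2\Cc$ with $f=\psi+\psi^*$. The second half of Lemma \ref{lemma:null-curves} then realizes $\psi$ as $(\la_1-\la_0)(\dot\Phi\Phi\inv)_{\la_0}$ for the LWR frame defined there, namely with potential $\xi_\la=\alpha_\la d\psi$ and initial condition $\Phi(z_0)=\exp(\alpha\psi(z_0))$. Thus locally $f=\LWR(D,\xi,\Phi,\la_0,\la_1)$. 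The CMC $1$ case is handled the same way with Theorem \ref{theorem:bryant} in place of Theorem \ref{theorem:weierstrass} and the second construction from the proof of Lemma \ref{lemma:null-curves} in place of the first.

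The only subtlety, and the one place where I would pause, is the distinction between an unbranched null curve (as required by Definition \ref{definition:null-curve} and used in the Weierstrass and Bryant theorems) and the possibly branched null curves produced by LWR. Since branch points of $\psi$ and $\Psi$ coincide with the zero locus of $A$, the conclusion should be read as asserting that $f^\Ee$ and $f^\Hh$ are conformal minimal or CMC $1$ immersions on the complement of this locus; in the converse direction the null curves constructed from an honest immersion are automatically unbranched, so no issue arises. I expect no real obstacle beyond stating this caveat cleanly.
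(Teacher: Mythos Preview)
Your proposal is correct and matches the paper's approach exactly: the paper simply states the theorem as a direct consequence of Theorems~\ref{theorem:weierstrass}, \ref{theorem:bryant} and Lemma~\ref{lemma:null-curves}, without further elaboration. Your write-up is in fact more detailed than the paper's, and your remark about the branched locus is a reasonable caveat that the paper leaves implicit.
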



\section{Gauge freedom and standard forms of LWR potentials}\label{sec:gauging}

\begin{definition}\label{def:gauge}
	An \textbf{LWR gauge} is a meromorphic map $g\colon~\Sigma\to\SL_2\Cc$, independent of $\la$.
\end{definition}

\begin{lemma}\label{lemma:gauging-lwr} 
	LWR gauges act on LWR frames and potentials without changing the induced immersions.
\end{lemma}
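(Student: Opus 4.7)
The plan is to verify directly that gauging by $g$ produces another LWR frame and leaves the two null curves $\psi$ and $\Psi$ (hence $f^\Ee$ and $f^\Hh$) unchanged. Let $\Phi$ be an LWR frame with potential $\xi$, and consider $\Phi g$.

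First I would compute the gauged potential:
\begin{equation*}
	(\Phi g)^{-1}\, d(\Phi g) \;=\; g^{-1} (\Phi^{-1} d\Phi)\, g + g^{-1}\, dg \;=\; g^{-1}\xi g + g^{-1}\, dg.
\end{equation*}
Writing locally $\xi_\la = (A\la + B)\, dz$ and using that $g$ is independent of $\la$, this equals $(g^{-1} A g)\,\la\, dz + \bigl(g^{-1} B g\, dz + g^{-1}\, dg\bigr)$, which is affine linear in $\la$. Since conjugation preserves nilpotency, $g^{-1} A g$ remains nilpotent, so the gauged one-form satisfies Definition~\ref{def:LWRpotential} and $\Phi g$ is indeed an LWR frame (the poles of $g$ just enlarge the pole set, which is allowed since LWR potentials are meromorphic).

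Next, because $g$ does not depend on $\la$, evaluation at $\la_0$ and $\la_1$ yields $(\Phi g)_{\la_j} = \Phi_{\la_j} g$, so
\begin{equation*}
	(\Phi g)_{\la_1}\,(\Phi g)_{\la_0}^{-1} \;=\; \Phi_{\la_1}\, g g^{-1}\, \Phi_{\la_0}^{-1} \;=\; \Psi,
\end{equation*}
and for the $\la$-derivative, $\partial_\la(\Phi g) = \dot{\Phi}\, g$, hence
\begin{equation*}
	\bigl(\partial_\la(\Phi g)\bigr)(\Phi g)^{-1} \;=\; \dot{\Phi}\, g\, g^{-1}\Phi^{-1} \;=\; \dot{\Phi}\Phi^{-1},
\end{equation*}
so $\psi$ is also unchanged. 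Consequently $f^\Ee = \psi + \psi^*$ and $f^\Hh = \Psi^*\Psi$ are both unchanged, proving the lemma.

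There is no serious obstacle here: the result is a direct consequence of the fact that the gauge is $\la$-independent, so the two copies of $g$ appearing in each of the null-curve formulas cancel. The only care needed is in observing that nilpotency and the affine-linear structure of the potential are preserved under conjugation by the $\la$-independent map $g$, which is immediate.
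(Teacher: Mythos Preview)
Your proof is correct and follows essentially the same approach as the paper's own argument: define $\xi\cdot g := g^{-1}\xi g + g^{-1}dg$, use $\la$-independence of $g$ to see that the gauged potential is again an LWR potential, and then observe that the $g$'s cancel in the formulas for $\psi$ and $\Psi$. You have simply written out explicitly the computations that the paper leaves as ``easy to check.''
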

\begin{proof}
	Let $\Phi$ be an LWR frame on $\Sigma$ with LWR potential $\xi$ and let $g\colon\Sigma\to\SL_2\Cc$ be an LWR gauge. Define
	\begin{equation*}
		\xi\cdot g := g\inv\xi g + g\inv dg.
	\end{equation*}
	It is easy to check that $\Phi\cdot g := \Phi g$ is a frame for $\xi\cdot g$. The fact that $g$ does not depend on $\la$ implies that $\xi\cdot g$ is linear in $\la$ and that the linear factor has vanishing determinant. Therefore, $\xi\cdot g$ is an LWR potential, and $\Phi g$ is an LWR frame. The $\la$-independence of $g$ also implies that the holomorphic null curves $\psi$ and $\Psi$ are unchanged after gauging. Therefore, the immersions themselves are preserved.
\end{proof}

In this section, we give two useful representatives in the gauge orbit of a given LWR potential. The first one (section~\ref{sec:weierstrass-potential}) is linear in $\la$ with no constant term and is directly related to the Weierstrass and Bryant representations. The second one (section~\ref{sec:schwarz-potential-1}) is off-diagonal and is uniquely determined by the Schwarzian derivative of a spinorial quantity that we define in section~\ref{sec:spinors}.

\textbf{Acknowledgement:} The authors wish to thank Franz Pedit for suggesting the Schwarz potential.

\subsection{Spinors}\label{sec:spinors}

\begin{definition}\label{def:spinor}
	Let $A\in\mathrm{sl}_2\Cc$ be a nilpotent matrix. A vector $x=(u,v)\in\Cc^2$ is a \textbf{spinor} for $A$ if 
	\begin{equation}\label{eq:xxperp}
		A = \matrix{-uv}{u^2}{-v^2}{uv} = \vectorr{u}{v} (-v\quad u) =: xx^\perp.
	\end{equation}
	A spinor for $A$ is uniquely determined up to a sign. We extend the definition and say that $x\colon \Dd\to \Cc^2$ is a spinor for an LWR potential $\xi$ if $xx^\perp = A$ where $\xi = (A\la + B)dz$ on some simply connected domain $\Dd\subset\Sigma$. 
\end{definition}

\begin{remark}\label{remark:xperpx}
	The notation $x^\perp$ has been chosen because for all $x=(u,v)\in\Cc^2$,
	\begin{equation*}
		x^\perp x = (-v\quad u)\vectorr{u}{v} = 0.
	\end{equation*}
\end{remark}

We give some properties of $x^\perp$ that can be directly computed:
\begin{lemma}\label{lemma:spin-properties}
	For all $x:=(u,v)\in\Cc^2$, writing $\norm{x}^2 := \abs{u}^2 + \abs{v}^2$,
	\begin{equation}\label{eq:spin-property-2}
		\scal{xx^\perp, (xx^\perp)^*} = \norm{x}^2,
	\end{equation}
	\begin{equation}\label{eq:spin-property-3}
		[xx^\perp, (xx^\perp)^*] = -\norm{x}^2(\norm{x}^2\I - 2xx^*),
	\end{equation}
	\begin{equation}\label{eq:spin-property-4}
		\norm{[xx^\perp, (xx^\perp)^*]} = \norm{y}^4,
	\end{equation}
	\begin{equation}\label{eq:spin-property-5}
		\scal{yy^*, (yy^\perp)'} = -\frac{1}{2}\norm{y}^2y^\perp y',
	\end{equation}
	\begin{equation}\label{eq:spin-property-1}
		\forall \Phi\in\SL_2\Cc,\quad \forall x \in\Cc^2,\quad(\Phi x)^\perp = x^\perp \Phi\inv.
	\end{equation}
\end{lemma}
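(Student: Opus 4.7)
The plan is to verify each identity by direct matrix computation, leaning on three elementary vector-level reductions. With $x=(u,v)$ viewed as a column and $x^\perp = (-v,u)$ as a row, one has (a) $x^* x = \norm{x}^2$, (b) $x^\perp (x^\perp)^* = \norm{x}^2$, and (c) the completeness relation $xx^* + (x^\perp)^* x^\perp = \norm{x}^2\,\I$. These three scalar/matrix identities collapse the quartic products $(xx^\perp)(xx^\perp)^*$ and $(xx^\perp)^*(xx^\perp)$ into $\norm{x}^2\cdot xx^*$ and $\norm{x}^2\cdot (x^\perp)^* x^\perp$ respectively, which is the engine behind everything else.

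For \eqref{eq:spin-property-2}, I would use the general-matrix identity $\scal{A,B} = \tfrac{1}{2}(\tr AB - \tr A \tr B)$, which follows from $\widehat{B} = (\tr B)\I - B$, combined with the tracelessness of $xx^\perp$. This reduces the claim to computing $\tr(xx^*) = \norm{x}^2$. For \eqref{eq:spin-property-3}, I expand the commutator using the two product simplifications above and then apply (c) to substitute $(x^\perp)^* x^\perp = \norm{x}^2\,\I - xx^*$, which reorganizes into the stated $-\norm{x}^2(\norm{x}^2 \I - 2xx^*)$. For \eqref{eq:spin-property-4}, I apply $\norm{M}^2 = -\det M$ to the result of \eqref{eq:spin-property-3}: the inner factor $2xx^*-\norm{x}^2\I$ is traceless and has determinant $-\norm{x}^4$ (its eigenvalues being $\pm\norm{x}^2$, since $xx^*$ has eigenvalues $\norm{x}^2$ and $0$), so the norm squared evaluates to $\norm{x}^8$.

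For \eqref{eq:spin-property-5}, I split $(yy^\perp)' = y'y^\perp + y(y^\perp)'$ and handle the two summands separately. Cyclic use of the scalars $y^\perp y = 0$ (remark~\ref{remark:xperpx}) and $y^\perp y' = uv'-u'v$ kills the mixed $\tr(AB)$ in the $y'y^\perp$ summand and leaves only the $-\tfrac{1}{2}\tr A \tr B$ contribution; the $y(y^\perp)'$ summand contributes zero once one uses $(y^\perp)' y = -y^\perp y'$ to cancel the two traces. For \eqref{eq:spin-property-1}, the identity $\Phi\inv = \widehat{\Phi}$ on $\SL_2\Cc$ enables a quick direct check: writing $\Phi = \smatrix{a}{b}{c}{d}$ with $ad-bc=1$, both $(\Phi x)^\perp$ and $x^\perp\Phi\inv$ evaluate to the row $(-cu-dv,\ au+bv)$.

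The main obstacle is purely bookkeeping: consistently keeping track of which quantities are scalars (such as $x^* x$, $y^\perp y$, and $y^\perp y'$) versus column or row vectors versus matrices, so that the associativity regroupings stay transparent. Introducing a shorthand like $W := y^\perp y' = uv'-u'v$ for the Wronskian in \eqref{eq:spin-property-5} would clarify the final step. None of the five identities should require more than a few lines of calculation once (a), (b), (c) are in hand.
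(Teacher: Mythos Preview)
Your plan is correct and coincides with the paper's, which gives no argument beyond the remark preceding the lemma that the identities ``can be directly computed.'' One caveat: carrying your own reduction for \eqref{eq:spin-property-2} to its conclusion gives $\tfrac{1}{2}\norm{x}^2\,\tr(xx^*)=\tfrac{1}{2}\norm{x}^4$, not the printed $\norm{x}^2$ --- this is a typo in the statement (and $\tfrac{1}{2}\norm{x}^4$ is exactly what the application in theorem~\ref{theorem:fundamental-forms} requires, via $ds^2=2\scal{f_z,f_{\zbar}}\abs{dz}^2$); likewise the $\norm{y}^4$ on the right of \eqref{eq:spin-property-4} should read $\norm{x}^4$.
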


\begin{proposition}\label{prop:q}
	Let $\xi$ be an LWR potential on $\Sigma$. Let $\Phi$ be a local LWR frame for $\xi$ and let $\la\in\Cc$. Let $y:=\Phi_\la x$ where $x$ is a spinor for $\xi$. Then the function
	\begin{equation}
		q := \det(y,y_z)
	\end{equation}
	is well-defined on $\Sigma$ and only depends on the orbit of $\xi$ under LWR gauging (not on $\Phi$, not on $\la$).
\end{proposition}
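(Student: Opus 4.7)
The core idea is that even though $y=\Phi_\la x$ seems to depend on both $\la$ and the frame $\Phi$, the nilpotency condition $A=xx^\perp$ forces $Ax=0$, which makes the $\la$-dependence drop out after computing $y_z$, and the $\SL_2\Cc$-valued nature of $\Phi_\la$ then makes the determinant frame-independent.

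First I would compute $y_z$ explicitly. Since $\Phi_\la^{-1}(\Phi_\la)_z = A\la+B$, we have $(\Phi_\la)_z = \Phi_\la(A\la+B)$, so
\begin{equation*}
y_z = (\Phi_\la)_z x + \Phi_\la x_z = \Phi_\la\bigl((A\la+B)x + x_z\bigr).
\end{equation*}
Writing $A=xx^\perp$ and using Remark~\ref{remark:xperpx} gives $Ax = x(x^\perp x)=0$, so the $\la$-term vanishes and $y_z = \Phi_\la(Bx+x_z)$. Since $\det\Phi_\la=1$, this yields
\begin{equation*}
q = \det(y,y_z) = \det(\Phi_\la x,\Phi_\la(Bx+x_z)) = \det(x,Bx+x_z),
\end{equation*}
a formula manifestly independent of $\la$ and of $\Phi$. (Two frames for the same $\xi$ differ by left multiplication by some $L_\la\in\SL_2\Cc$ constant in $z$, and this has no effect on $\det(y,y_z)$ for the same reason.) It also shows that swapping the spinor $x\mapsto -x$ multiplies $q$ by $(-1)^2=1$, so $q$ is insensitive to the sign ambiguity in Definition~\ref{def:spinor}.

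Next I would verify gauge invariance. Given an LWR gauge $g$, the gauged potential is $\xi\cdot g = g\inv\xi g + g\inv dg$, whose nilpotent part is $g\inv A g = g\inv(xx^\perp)g$. Using \eqref{eq:spin-property-1} with $\Phi=g\inv$, we get $(g\inv x)^\perp = x^\perp g$, hence
\begin{equation*}
(g\inv x)(g\inv x)^\perp = g\inv x\cdot x^\perp g = g\inv A g,
\end{equation*}
so $g\inv x$ is a spinor for $\xi\cdot g$. The gauged frame is $\Phi g$, and the corresponding $y$-vector is $(\Phi_\la g)(g\inv x) = \Phi_\la x = y$. Thus $y$, and hence $q$, is untouched by the gauge action.

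Finally I would address the global well-definedness on $\Sigma$. Locally, the formula $q=\det(x,Bx+x_z)$ shows $q$ is a meromorphic function, defined up to the sign of $x$, which cancels. Globally the LWR frame lives on $\widetilde\Sigma$ and spinors are defined chart by chart, but by the computation above $q$ is computed purely from $\xi$ in a single coordinate patch, so sign choices can be made consistently on overlaps; equivalently, $q$ is quadratic in the pair $(x,x_z)$, so the two local branches agree on overlaps and patch to a meromorphic function on $\Sigma$. I do not anticipate a serious obstacle here: the only subtle point is making sure the gauge action on spinors via $x\mapsto g\inv x$ is the correct one, which is why property \eqref{eq:spin-property-1} has been singled out in Lemma~\ref{lemma:spin-properties}.
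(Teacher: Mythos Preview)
Your proof is correct and follows essentially the same approach as the paper: compute $y_z$, use $x^\perp x=0$ to kill the $\la$-term, then use $\det\Phi_\la=1$ to obtain $q=\det(x,Bx+x_z)$; for gauge invariance, observe via \eqref{eq:spin-property-1} that the gauged spinor is $g\inv x$ and hence the gauged $y$-vector equals the original one. Your treatment is in fact slightly more thorough than the paper's, since you explicitly address the spinor sign ambiguity and comment on patching $q$ globally on $\Sigma$.
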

\begin{proof}
	We first show that $q$ satisfies
	\begin{equation}\label{eq:q}
		q = \det(x, Bx + x_z)
	\end{equation}
	where $\xi=(A\la + B)dz$. 
	To do so, differentiate $y=\Phi_\la x$ using $d\Phi=\Phi\xi$ and recall that $x^\perp x =0$ to get
	\begin{align*}
		dy &= \Phi\xi x + \Phi dx\\
		&= \Phi(\la A x + Bx + x_z)dz \\
		&= \Phi(\la xx^\perp x + Bx + x_z)dz\\
		&= \Phi(Bx + x_z)dz.
	\end{align*}
	Therefore, 
	\begin{equation*}
		q = \det(\Phi x, \Phi(Bx + x_z))
	\end{equation*}
	and \eqref{eq:q} follows from $\det \Phi = 1$. This shows that $q$ is well-defined on $\Sigma$ and does not depend on $\Phi$ nor does it depend on $\la$.
	
	To show that $q$ is invariant under LWR gauging, let $\tilde{\xi} := \xi\cdot g$ where $g$ is an LWR gauge. Let $\tilde{\Phi}:=\Phi g$, let $\tilde{x}$ be a spinor for $\tilde{\xi}$ and let $\tilde{y}:=\tilde{\Phi}\tilde{x}$ for some $\la\in\Cc$. Then by \eqref{eq:spin-property-1}, $\tilde{x} = g\inv x$, so $\tilde{y} = y$ and $q=\det(\tilde{y}, \tilde{y}_z)$.
\end{proof}

\begin{definition}
	An LWR potential is \textbf{degenerate} if $q$ is constantly vanishing, and \textbf{non-degenerate} otherwise.
\end{definition}

\subsection{Weierstrass potential}\label{sec:weierstrass-potential}

\begin{proposition}\label{prop:weierstrass-potential}
	Let $\xi$ be an LWR potential on $\Sigma$ and let $\Phi$ be an LWR frame for $\xi$ on a simply-connected domain $\Dd\subset\Sigma$. With $\Phi_0:=\Phi|_{\la=0}$,
	\begin{equation}\label{eq:weierstrass-potential}
		\xi\cdot\Phi_0\inv = \la\matrix{g}{-g^2}{1}{-g}\frac{qdz}{g_z}
	\end{equation}
	where 
	\begin{equation}\label{eq:defgq}
		g:=u/v,\qquad q:= uv_z-vu_z
	\end{equation}
	and
	\begin{equation*}
		(u,v):=\Phi_0 x
	\end{equation*}
	where $x$ is a spinor for $\xi$.
\end{proposition}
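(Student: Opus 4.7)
The plan is to start from the gauge formula from Lemma~\ref{lemma:gauging-lwr}, apply it with $g=\Phi_0\inv$, and then use the spinor identity \eqref{eq:spin-property-1} to recognize the rank-one form of the resulting matrix.

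Firstly, I would write out
\begin{equation*}
	\xi\cdot\Phi_0\inv \;=\; \Phi_0\,\xi\,\Phi_0\inv + \Phi_0\,d(\Phi_0\inv) \;=\; \Phi_0(A\la+B)\Phi_0\inv\,dz - (d\Phi_0)\Phi_0\inv.
\end{equation*}
By definition of $\Phi_0$, we have $\Phi_0\inv d\Phi_0 = \xi|_{\la=0} = B\,dz$, hence $(d\Phi_0)\Phi_0\inv = \Phi_0 B\Phi_0\inv\,dz$. The $B$-term therefore cancels and
\begin{equation*}
	\xi\cdot\Phi_0\inv \;=\; \la\,\Phi_0 A\Phi_0\inv\,dz.
\end{equation*}
This already explains why the Weierstrass normal form has no constant term: gauging by $\Phi_0\inv$ is precisely designed to kill the $\la$-independent part.

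Secondly, I would use the spinor decomposition $A=xx^\perp$ together with \eqref{eq:spin-property-1} to compute
\begin{equation*}
	\Phi_0 A\Phi_0\inv \;=\; (\Phi_0 x)(\Phi_0 x)^\perp \;=\; \vectorr{u}{v}(-v\quad u) \;=\; \matrix{-uv}{u^2}{-v^2}{uv}.
\end{equation*}
Factoring out $-v^2$ rewrites this as
\begin{equation*}
	\Phi_0 A\Phi_0\inv \;=\; -v^2\matrix{g}{-g^2}{1}{-g}, \qquad g=u/v,
\end{equation*}
so only the scalar prefactor remains to be identified.

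Finally, a direct computation of $g_z=(u/v)_z = (vu_z-uv_z)/v^2 = -q/v^2$ shows $-v^2 = q/g_z$, which plugs in to give exactly \eqref{eq:weierstrass-potential}. I do not expect any real obstacle here: everything is a short matrix manipulation once the cancellation at $\la=0$ is observed, and Proposition~\ref{prop:q} guarantees that $q$ and $g_z$ are genuinely well-defined on $\Dd$ (independent of the sign ambiguity in $x$, since both $u$ and $v$ flip sign simultaneously). The only mild care needed is to restrict to the open set where $v\neq 0$ (so that $g$ is defined) and where $g_z\neq 0$ (i.e. where the potential is non-degenerate in the sense defined just above the proposition); on this set the identity is an equality of meromorphic $1$-forms and extends by analytic continuation to all of $\Dd$.
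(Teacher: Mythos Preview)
Your proof is correct and follows essentially the same approach as the paper: both compute the gauge action, observe that the $B$-term cancels because $\Phi_0\inv d\Phi_0 = Bdz$, apply the spinor identity \eqref{eq:spin-property-1} to rewrite $\Phi_0 A\Phi_0\inv$ as $(\Phi_0 x)(\Phi_0 x)^\perp$, and then identify the resulting matrix in terms of $g$ and $q$. Your version is slightly more explicit about the final algebraic step (factoring out $-v^2$ and computing $g_z=-q/v^2$), which the paper leaves to the reader.
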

\begin{proof}
	Write $\xi = (A\la + B)dz$ so that $xx^\perp = A$ and let $y:=\Phi_0 x$. By \eqref{eq:spin-property-1},
	\begin{equation*}
		\Phi_0 A \Phi_0\inv = \Phi_0 x x^\perp \Phi_0\inv = yy^\perp.
	\end{equation*}
	Moreover, $d\Phi=\Phi\xi$, so
	\begin{equation*}
		\Phi_0 d(\Phi_0\inv) = -d\Phi_0 \Phi_0\inv = -\Phi_0B\Phi_0\inv dz.
	\end{equation*}
	Therefore,
	\begin{equation*}
		\xi\cdot\Phi_0\inv = \la y y^\perp dz
	\end{equation*}
	and \eqref{eq:weierstrass-potential} follows from $y=(u,v)$ and the definitions of $g$ and $q$.
\end{proof}

\begin{remark}Let $\xi_1$ be the potential given by \eqref{eq:weierstrass-potential}.
	\begin{enumerate}[1.]
		\item The functions $g$ and $q$ in $\xi_1$ do not depend on the choice of sign for the spinor $x$, and the definition of $q$ in proposition~\ref{prop:weierstrass-potential} agrees with its definition in proposition~\ref{prop:q}.
		\item Since $\Phi_0$ is only defined locally on $\Sigma$, so is the potential $\xi_1$.
		\item Since $\Phi_0$ is uniquely determined by $\xi$ up to conjugation by $\SL_2\Cc$, so is the function $g$. However, proposition~\ref{prop:q} shows that the function $q$ is uniquely determined by $\xi$.
		\item As we will see in section~\ref{sec:local-description}, the potential $\xi_1$ is directly related to the Weierstrass representation: if $\la_0 = 0$, then $g$ is the Gauss map of $f^\Ee$ and the hyperbolic Gauss map of $f^\Hh$. Moreover, if $\la_1=1$, then $qdz^2$ is the Hopf differential of $f^\Ee$ and $f^\Hh$.
	\end{enumerate}
\end{remark}

\subsection{Schwarz potential}\label{sec:schwarz-potential-1}

\begin{definition}
	Let $g\colon\Cc\to\Cc$ be a meromorphic function. The map
	\begin{equation*}
		\sS[g] = \left(\frac{g''}{2g'}\right)^2 - \left(\frac{g''}{2g'}\right)'
	\end{equation*}
	is the \textbf{Schwarzian derivative} of $g$. It is invariant under Möbius transformations of $g$. It acts as a second order operator under pre-composition: for any meromorphic $\phi\colon \Cc\to \Cc$,
	\begin{equation}\label{eq:schwarzian-deriv-equiv}
		\sS[g\circ \phi] = (\sS[g] \circ \phi)(\phi')^2 + \sS[\phi].
	\end{equation}
\end{definition}

The following lemma justifies our normalization for the Schwarzian derivative and is merely a computation.

\begin{lemma}\label{lemma:equation-schwarz}
	Let $u,v$ be two holomorphic solutions of the equation
	\begin{equation*}
		y'' =Sy 
	\end{equation*}
	where $S$ is a meromorphic function. If $u/v$ is not constant, then $S = \sS[u/v] $.
\end{lemma}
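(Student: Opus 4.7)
The plan is a direct computation, hinging on the fact that the Wronskian of two solutions of a second-order linear ODE with no first-order term is constant.

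First I would set $g := u/v$ and compute $g' = (u'v - uv')/v^2 = W/v^2$, where $W := u'v - uv'$ is the Wronskian. The key observation is that
\begin{equation*}
W' = u''v - uv'' = (Su)v - u(Sv) = 0,
\end{equation*}
so $W$ is a (nonzero, since $g$ is nonconstant) constant. Taking a logarithmic derivative of $g' = W v^{-2}$ then yields
\begin{equation*}
\frac{g''}{g'} = -2\,\frac{v'}{v}, \qquad \text{hence}\qquad \frac{g''}{2g'} = -\frac{v'}{v}.
\end{equation*}

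Next I would substitute this into the definition of $\sS[g]$. Squaring gives $(g''/(2g'))^2 = (v'/v)^2$, while differentiating and using the ODE $v'' = Sv$ gives
\begin{equation*}
\left(\frac{g''}{2g'}\right)' = -\left(\frac{v'}{v}\right)' = -\frac{v''}{v} + \left(\frac{v'}{v}\right)^2 = -S + \left(\frac{v'}{v}\right)^2.
\end{equation*}
Subtracting, the $(v'/v)^2$ terms cancel and I obtain $\sS[g] = S$, as required.

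There is essentially no obstacle: the argument is a short computation, with the only subtlety being to verify that $W \neq 0$, which follows from the hypothesis that $g = u/v$ is nonconstant (so that $g' \neq 0$, forcing $W = g' v^2 \not\equiv 0$). The normalization constant $1/2$ inside $\sS$ is exactly what makes the $(v'/v)^2$ contributions cancel, which explains the lemma's role as justification for this choice of normalization.
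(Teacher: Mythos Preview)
Your proof is correct and is precisely the standard computation the paper has in mind; the paper itself omits the proof entirely, remarking only that the lemma ``is merely a computation'' and that it justifies the chosen normalization of $\sS$. Your observation that the Wronskian is constant (because the ODE has no first-order term) and the logarithmic-derivative identity $g''/(2g') = -v'/v$ are exactly the two ingredients needed, and your verification that the $(v'/v)^2$ terms cancel confirms the normalization remark.
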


\begin{theorem}\label{theorem:schwarz-potential}
	Let $\xi=(A\la + B)dz$ be a non-degenerate LWR potential in the coordinate $z$. Then there exists an LWR gauge $k$, unique up to a sign, such that
	\begin{equation*}
		\xi\cdot k = \matrix{0}{1}{\la q +s}{0}dz
	\end{equation*}
	for some functions $q$ and $s$.
	
	Moreover, if $(u,v):=\Phi x$ where $x$ is a spinor for $A$ and $\Phi$ is an LWR frame for $\xi$ at $\la=0$, then
	\begin{equation*}
		q = uv_z-vu_z\qquad\text{and}\qquad s = \sS[u/v]|_{\la=0}.
	\end{equation*}
\end{theorem}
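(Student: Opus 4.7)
My plan is to construct $k$ explicitly from the data $(u,v)$, verify by direct computation that it yields the claimed Schwarz form, and deduce uniqueness from a stabilizer argument. The starting observation is that if an LWR frame $\tilde\Phi$ for the Schwarz potential $\smatrix{0}{1}{\la q+s}{0}dz$ exists, then at $\la=0$ its columns must read $(b_z,d_z)^T$ and $(b,d)^T$ for scalars $b,d$ solving $y_{zz}=sy$, while the spinor of the $\la$-linear part $\smatrix{0}{0}{q}{0}$ is $(0,\pm i\sqrt q)^T$. By the gauge invariance of $y=\Phi_0 x$ (Proposition~\ref{prop:q}) this forces $(u,v)=\pm i\sqrt q\,(b,d)$, so $b=cu$, $d=cv$ with $c^2=-1/q$. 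Fixing a branch of $\sqrt q$ on a simply connected subdomain avoiding the zeros of $q$ (possible because non-degeneracy means $q\not\equiv 0$), I set $c:=i/\sqrt q$ and
\begin{equation*}
	\tilde\Phi_0 := \matrix{b_z}{b}{d_z}{d},\qquad k := \Phi_0^{-1}\tilde\Phi_0.
\end{equation*}
The Wronskian $bd_z-db_z=c^2(uv_z-vu_z)=-1$ is a nonzero constant: this gives $\det\tilde\Phi_0=1$, and, via $(bd_z-db_z)_z=bd_{zz}-db_{zz}=0$, also forces $b_{zz}/b=d_{zz}/d=:S$, so $b,d$ share a common ODE $y_{zz}=Sy$. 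Lemma~\ref{lemma:equation-schwarz} combined with the Möbius invariance $\sS[b/d]=\sS[u/v]$ identifies $S=s$. Since $\Phi_0$ and $\tilde\Phi_0$ are both $\la$-independent, so is $k$.

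The verification that $\xi\cdot k=\smatrix{0}{1}{\la q+s}{0}dz$ splits into $\la$-linear and $\la$-constant parts. For the $\la$-linear part, the identity $\Phi_0 A\Phi_0^{-1}=yy^\perp$ from the proof of Proposition~\ref{prop:weierstrass-potential} reduces $k^{-1}Ak$ to $\tilde\Phi_0^{-1}yy^\perp\tilde\Phi_0$; since $y\propto\tilde\Phi_0 e_2$ with proportionality constant $-i\sqrt q$, applying \eqref{eq:spin-property-1} to $y^\perp$ collapses the product to $q\smatrix{0}{0}{1}{0}$. For the $\la$-constant part, differentiating $k=\Phi_0^{-1}\tilde\Phi_0$ using $d\Phi_0=\Phi_0 B\,dz$ and $d\tilde\Phi_0=\tilde\Phi_0\smatrix{0}{1}{s}{0}dz$ gives
\begin{equation*}
	k^{-1}dk = -k^{-1}Bk\,dz + \smatrix{0}{1}{s}{0}dz,
\end{equation*}
so the $B$-conjugation in the gauge formula $k^{-1}\xi k+k^{-1}dk$ cancels and only the desired Schwarz piece survives.

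For uniqueness, if $k'$ is another such gauge then $g:=k^{-1}k'$ is a $\la$-independent meromorphic gauge stabilizing $\smatrix{0}{1}{\la q+s}{0}dz$. Commutation with the $\la$-linear part wherever $q\ne 0$ pins $g$ to the form $\smatrix{a}{0}{c}{a}$ with $a^2=1$, and then the $\la$-constant equation $g^{-1}\smatrix{0}{1}{s}{0}g+g^{-1}g_z=\smatrix{0}{1}{s}{0}$ forces $c\equiv 0$, yielding $g=\pm I$ and $k'=\pm k$.

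The only real technical nuisance I anticipate is that $c=i/\sqrt q$, and hence $k$, is genuinely branched at the zeros of $q$; on any simply connected subdomain with a chosen branch of $\sqrt q$, $k$ is an honest meromorphic LWR gauge, and the two branch choices produce $k$ and $-k$, which matches the \emph{up to a sign} clause in the statement. Apart from this branch issue, the verifications are short matrix computations relying only on the spinor identity \eqref{eq:spin-property-1} and the ODE characterisation of the Schwarzian in Lemma~\ref{lemma:equation-schwarz}.
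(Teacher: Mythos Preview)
Your proof is correct and the uniqueness half matches the paper's argument almost verbatim, but your existence half is genuinely different. The paper first gauges by $k_1=\Phi_0^{-1}$ to reach the Weierstrass form $\la y y^\perp dz$ (Proposition~\ref{prop:weierstrass-potential}) and then applies three explicit elementary gauges $k_2,k_3,k_4$ (an upper-triangular shear by $g=u/v$, a diagonal rescaling by $\sqrt{g'}$, and a lower-triangular shear by $g''/2g'$) to reach the Schwarz form, reading off $s=\sS[g]$ from the last step. You instead build the target frame $\tilde\Phi_0$ in one shot from the scalar ODE $y_{zz}=Sy$ solved by $b=cu$, $d=cv$, and then set $k=\Phi_0^{-1}\tilde\Phi_0$; the identification $S=\sS[u/v]$ comes from Lemma~\ref{lemma:equation-schwarz} rather than from an explicit gauge computation. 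Your route is more conceptual and makes the appearance of the Schwarzian transparent, while the paper's factorisation has the advantage of exhibiting the gauge as a product of elementary moves and of isolating exactly which step introduces the branching (the $\sqrt{g'}$ in $k_3$, equivalent to your $\sqrt q$). Both approaches share the same local caveat at the zeros of $q$, and both yield the same $k$ up to sign, as they must.
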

\begin{proof}
	We first show existence. Gauge $\xi$ with $k_1:=\Phi_0\inv$ so that $\xi_1:=\xi\cdot k_1$ is a Weierstrass potential as in proposition~\ref{prop:weierstrass-potential}. With $g:=(u/v)|_{\la=0}$, compute the successive LWR gauge transformations:
	\begin{equation*}
		k_2:=\matrix{1}{g}{0}{1},\quad \xi_2 := \xi_1\cdot k_2 = \matrix{0}{g'}{\la q/g'}{0}dz,
	\end{equation*}
	\begin{equation*}
		k_3:=\matrix{\sqrt{g'}}{0}{0}{\frac{1}{\sqrt{g'}}},\quad \xi_3:=\xi_2\cdot k_3 = \matrix{h}{1}{\la q}{-h}dz,\quad h:=\frac{g''}{2g'},
	\end{equation*}
	\begin{equation*}
		k_4 := \matrix{1}{0}{-h}{1},\quad \xi_4:=\xi_3\cdot k_4 = \matrix{0}{1}{\la q +s}{0}dz,\quad s:=\sS[g].
	\end{equation*}
	Therefore, with $k:=k_1k_2k_3k_4$, the gauged potential $\xi\cdot k$ has the expected form.
	
	We now show uniqueness. Let $r_1$ and $r_2$ be two LWR gauges such that
	\begin{equation*}
		\eta_1:=\xi\cdot r_1 = \matrix{0}{1}{\la q_1 +s_1}{0}dz\quad \text{and}\quad \eta_2:=\xi\cdot r_2 = \matrix{0}{1}{\la q_2 +s_2}{0}dz.
	\end{equation*}
	Then $\eta_2 = \eta_1\cdot r$ with $r:=r_1\inv r_2$. This implies that
	\begin{equation}\label{eq:dr}
		dr = r\eta_2 - \eta_1 r.
	\end{equation}
	With
	\begin{equation*}
		r:=\matrix{a}{b}{c}{d},
	\end{equation*}
	\eqref{eq:dr} is equivalent to
	\begin{equation*}
		\matrix{a'}{b'}{c'}{d'} = \la \matrix{bq_2}{0}{dq_2-aq_1}{bq_1} + \matrix{-c}{a-d}{ds_2-as_1}{c}.
	\end{equation*}
	But $\eta_1$ and $\eta_2$ are gauge-equivalent, so by proposition~\ref{prop:q}, $q_1=q_2=q$. Equating the $\la^1$-coefficients and recalling that $q$ is not constantly vanishing because $\xi$ is non-degenerate, $b=0$ and $a=d$. We deduce that $c=0$ and that $r=\pm \I$ because $\det r =1$. Therefore, $r_1=\pm r_2$ and $s_1=s_2$.
\end{proof}

\begin{corollary}\label{corrolary:schwarz-spinor}
	Let $\xi$ be an LWR potential. Let $\Phi$ be a frame for $\xi$, let $x$ be a spinor for $\xi$ and let $y:=\Phi x$. With $y=(u,v)$, the function $\sS[u/v]$ only depends on the orbit of $\xi$ under LWR gauging and satisfies for all $\la\in\Cc$,
	\begin{equation}\label{eq:laq+s}
		\sS[u/v] = \la q + s
	\end{equation}
	where $q$ and $s$ are as in theorem~\ref{theorem:schwarz-potential}.
\end{corollary}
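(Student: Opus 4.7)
My plan is to reduce both assertions to the Schwarz gauge form provided by Theorem~\ref{theorem:schwarz-potential}, exploiting gauge invariance of the vector $y=\Phi x$ as the main lever.

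First, I would establish the gauge invariance of $u/v$, and hence of $\sS[u/v]$. Given any LWR gauge $g$, the gauged frame is $\Phi g$ and, by \eqref{eq:spin-property-1}, the spinor of the gauged potential is $\tilde{x}=g\inv x$ (up to sign). Therefore $(\Phi g)\tilde{x} = \Phi x = y$, so the pair $(u,v)$ is literally preserved under LWR gauging. This is exactly the same calculation already carried out in the proof of Proposition~\ref{prop:q} and requires no new work.

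Second, having reduced to an arbitrary gauge, I would pass to the Schwarz gauge, $\xi\cdot k = \smatrix{0}{1}{\la q + s}{0}\,dz$. In this gauge the nilpotent matrix $A' = \smatrix{0}{0}{q}{0}$ has spinor $\tilde{x} = (0,\sqrt{-q})$ (with a local branch choice in a simply-connected domain where $q\neq 0$), by comparison with \eqref{eq:xxperp}. Writing the gauged frame as $\tilde\Phi = \smatrix{a}{b}{c}{d}$, the equation $d\tilde\Phi = \tilde\Phi\,(\xi\cdot k)$ yields the system
\begin{equation*}
	a_z = (\la q + s)\,b,\quad b_z = a,\quad c_z = (\la q + s)\,d,\quad d_z = c,
\end{equation*}
from which $b_{zz} = (\la q + s)\,b$ and $d_{zz} = (\la q + s)\,d$. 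Moreover $\tilde y = \tilde\Phi\tilde x = \sqrt{-q}\,(b,d)^\top$, so the ratio $u/v$ (which is gauge-invariant by the first step) equals $b/d$.

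Finally, I would invoke Lemma~\ref{lemma:equation-schwarz}: since $b$ and $d$ are two holomorphic solutions of $y_{zz} = (\la q + s)\,y$, their ratio has Schwarzian equal to $\la q + s$, provided $b/d$ is non-constant. The latter follows from $\det\tilde\Phi = 1$: if $b=\mu d$ for a constant $\mu$, then $a = b_z = \mu d_z = \mu c$, giving $ad - bc = 0$, a contradiction. This proves $\sS[u/v] = \la q + s$ for all $\la\in\Cc$.

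The only mildly subtle point is that Theorem~\ref{theorem:schwarz-potential} requires non-degeneracy of $\xi$; in the degenerate case $q\equiv 0$, the vector $y$ has constant direction and $u/v$ is locally constant, so $\sS[u/v]$ is trivially $0 = \la\cdot 0 + 0$, matching the convention that $s=0$ when $q=0$. No genuine obstacle arises; the proof is essentially bookkeeping given Proposition~\ref{prop:q}, Theorem~\ref{theorem:schwarz-potential}, and Lemma~\ref{lemma:equation-schwarz}.
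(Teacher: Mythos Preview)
Your proposal is correct and follows essentially the same route as the paper: gauge invariance of $y=\Phi x$, reduction to the Schwarz form, derivation of the second-order ODE $y''=(\la q+s)y$ for the frame entries, and application of Lemma~\ref{lemma:equation-schwarz}. Minor differences are that the paper separately argues independence from the initial condition of $\Phi$ via a M\"obius-invariance step (which your formula yields implicitly), and that your $\det\tilde\Phi=1$ argument for non-constancy is slightly more direct than the paper's appeal to non-degeneracy via Proposition~\ref{prop:q}; the degenerate-case digression is unnecessary since Theorem~\ref{theorem:schwarz-potential} already assumes $q\not\equiv 0$.
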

\begin{proof}
	We first show that $\sS[u/v]$ does not depend on the initial condition for $\Phi$. Let $\tilde{\Phi} := C\Phi$ where $C\in\SL_2\Cc$ is holomorphic in $\la\in\Cc$ and let $\tilde{y}:=\tilde{\Phi}x$. Then  $\tilde{y} = Cy$. With $\tilde{y} = (\tilde{u}, \tilde{v})$, $\tilde{u}/\tilde{v}$ and $u/v$ differ by a $\la$-dependent M\"obius transformation. Therefore, their respective Schwarzian derivative are the same.
	
	We now show that $\sS[u/v]$ is invariant under LWR gauging. Let $g$ be an LWR gauge and let $\tilde{\Phi} := \Phi g$ and $\tilde{\xi}:=\xi\cdot g$. Then, by \eqref{eq:spin-property-1}, $\widetilde{x}:=g\inv x$ is a spinor for $\tilde{\xi}$. Let $\tilde{y} := \tilde{\Phi}\tilde{x}$. Then $\tilde{y} = \Phi x = y$, so $\sS[u/v]$ is invariant under LWR gauging.
	
	We now prove \eqref{eq:laq+s} by assuming, without loss of generality, that $\xi$ is given in its Schwarz form, as in theorem~\ref{theorem:schwarz-potential}.
	Let $y = \Phi x$ where $x$ is a spinor for $\xi$ and $\Phi$ is an LWR frame associated with $\xi$. Considering the form of $\xi$, we have $x = i\sqrt{q}(0,1)$. Writing
	\begin{equation*}
		\Phi = \matrix{a}{c}{b}{d},
	\end{equation*}
	we get
	\begin{equation*}
		y = i\sqrt{q}(c,d)
	\end{equation*}
	and
	\begin{equation*}
		\frac{u}{v} = \frac{c}{d}.
	\end{equation*}
	Now differentiate the equation $d\Phi = \Phi\xi$ componentwise in $z$ to get
	\begin{equation*}
		\begin{cases}
			c'' = a' = (\la q + s)c\\
			d'' = b' = (\la q + s)d.
		\end{cases}
	\end{equation*}
	By hypothesis, $\xi$ is non-degenerate, so by proposition~\ref{prop:q}, $\det(y,y_z)\neq 0$, therefore $c/d$ is not constant. By lemma~\ref{lemma:equation-schwarz}, $\sS[u/v] = \la q + s$.
\end{proof}

\begin{remark}
	Theorem~\ref{theorem:fundamental-forms} and corollary~\ref{corollary:gaussmap} will give a geometric interpretation of $q$ and $s$ in terms of the the induced immersion's Hopf differential and its Gauss map's Schwarzian derivative.
\end{remark}

\section{Local description in the LWR}\label{sec:local-description}

In this section, we compute the first and second fundamental forms as well as the Gauss map of immersions in terms of their LWR data. This allows us in corollary~\ref{corollary:gaussmap} to complete the geometric interpretation of the Schwarz potential introduced in section~\ref{sec:schwarz-potential-1}.

\subsection{First and second fundamental forms}

\begin{theorem}\label{theorem:fundamental-forms}
	Let $\Phi$ be an LWR frame with LWR potential $\xi$. Let $x$ be a spinor for $\xi$ and let $y:=\Phi x$.
	The metric of $f^\Xx=\LWR(\Sigma, \xi, \Phi, \la_0, \la_1)$ is
	\begin{equation}\label{eq:metric-lwr}
		ds^2 = \begin{cases}
			\abs{\la_1-\la_0}^2\norm{y_{\la_0}}^4\abs{dz}^2 & \text{ if $\Xx^3 = \Ee^3$,}\\ \abs{\la_1-\la_0}^2\norm{y_{\la_1}}^4\abs{dz}^2 & \text{ if  $\Xx^3 = \Hh^3$}.\\
		\end{cases}
	\end{equation}
	In both cases, the Hopf differential of $f^\Xx$ is
	\begin{equation}\label{eq:hopf}
		Qdz^2 = (\la_1-\la_0)qdz^2
	\end{equation}
	where $q:=\det(y,y_z)$. 
\end{theorem}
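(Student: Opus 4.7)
The plan is to reduce both the metric and the Hopf differential to algebraic identities in the spinor $y_\la := \Phi_\la x$. Starting from the formulas for $d\psi$ and $d\Psi$ derived in the proof of lemma~\ref{lemma:null-curves}, I would substitute $A = xx^\perp$ and use the transformation rule $(\Phi x)^\perp = x^\perp \Phi\inv$ from equation~\eqref{eq:spin-property-1} to rewrite these differentials in the compact forms
\begin{equation*}
	d\psi = (\la_1-\la_0)\, y_{\la_0}\, y_{\la_0}^\perp\, dz, \qquad d\Psi = (\la_1-\la_0)\, y_{\la_1}\, y_{\la_0}^\perp\, dz.
\end{equation*}
From here, everything becomes a $2\times 2$ matrix manipulation.

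For the metric I would use $ds^2 = 2\scal{f_z,f_{\bar z}}|dz|^2 = -\tr(f_z\widehat{f_{\bar z}})|dz|^2$ together with the two observations that $\widehat{X} = -X$ for any nilpotent $X = xx^\perp$ (a direct check from equation~\eqref{eq:adjugate}) and that a direct $2\times 2$ computation gives $\tr\bigl(xx^\perp(xx^\perp)^*\bigr) = \norm{x}^4$. In the Euclidean case $f_z = \psi_z = (\la_1-\la_0)\,y_{\la_0}y_{\la_0}^\perp$ and the identity applies verbatim. In the hyperbolic case, I would write $f_z = f\beta$ with $\beta := \Psi\inv\partial_z\Psi = (\la_1-\la_0)\,y_{\la_0}y_{\la_0}^\perp$; the key step is the conjugation identity
\begin{equation*}
	\Psi\beta\Psi\inv = (\la_1-\la_0)\, y_{\la_1}y_{\la_1}^\perp,
\end{equation*}
which follows from $\Psi y_{\la_0} = y_{\la_1}$ and the dual relation $y_{\la_0}^\perp\Psi\inv = y_{\la_1}^\perp$ (again via equation~\eqref{eq:spin-property-1}). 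Cyclic invariance of the trace then converts the expression for $\tr(f_z\widehat{f_{\bar z}})$ from a $y_{\la_0}$-form to a $y_{\la_1}$-form, producing the stated metric.

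For the Hopf differential I would use $Q = \scal{f_{zz},N}$ (obtained by differentiating the tangency relation $\scal{f_z, N} = 0$). In the Euclidean case the cross-product formula $f_x\times f_y = -[f_z,f_{\bar z}]$ combined with equation~\eqref{eq:spin-property-3} identifies the unit normal as $N = \I - 2\,y_{\la_0}y_{\la_0}^*/\norm{y_{\la_0}}^2$. Expanding $f_{zz} = (\la_1-\la_0)\bigl((y_{\la_0})_z y_{\la_0}^\perp + y_{\la_0}\,((y_{\la_0})_z)^\perp\bigr)$ and repeatedly using $y^\perp y = 0$ from remark~\ref{remark:xperpx} collapses $\tr(f_{zz}N)$ to $2(\la_1-\la_0)\det\bigl(y_{\la_0},(y_{\la_0})_z\bigr) = 2(\la_1-\la_0)q$, so $Q = \frac{1}{2}\tr(f_{zz}N) = (\la_1-\la_0)q$. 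The hyperbolic case proceeds analogously via $f_{zz} = f\beta_z$ (because $\beta^2 = 0$) and the transported normal in $\Hh^3$; the same invariant $q$ appears thanks to its $\la$-independence from proposition~\ref{prop:q}. The main obstacle is the hyperbolic metric step, where tracking the interplay of $\Psi^*$-conjugation, the adjugate, and the nilpotent $\beta$ requires the key identity $\Psi\beta\Psi\inv = (\la_1-\la_0)y_{\la_1}y_{\la_1}^\perp$; once that identity is in hand, the remaining trace computation collapses cleanly to the claimed expression.
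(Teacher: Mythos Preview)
Your proposal is correct and follows essentially the same route as the paper: express $d\psi$ and $d\Psi$ via the spinor $y_\la$, reduce the metric to the trace identity $\tr\bigl(yy^\perp(yy^\perp)^*\bigr)=\norm{y}^4$, obtain the normal from the commutator (equation~\eqref{eq:spin-property-3}), and compute $Q=\scal{f_{zz},N}$ using $y^\perp y=0$; the only cosmetic difference is that the paper works directly with $\Psi_z\Psi^{-1}=(\la_1-\la_0)y_{\la_1}y_{\la_1}^\perp$ and invokes the isometric action~\eqref{eq:isometries-H3}, whereas you start from $\beta=\Psi^{-1}\Psi_z$ and conjugate back, which is the same computation in a slightly different order.
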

\begin{proof}
	We start with Euclidean space and compute the metric given by \eqref{eq:metric}. Consider $\psi$ defined in \eqref{eq:null-curves} and compute
	\begin{equation}\label{eq:fz-euclidean}
		f_z = \psi_z,\quad f_\zbar=(\psi_z)^*.
	\end{equation}
	By \eqref{eq:diff-null-curve-euclidean}, with $y_0 := \Phi_{\la_0}x$ (to ease the notation),
	\begin{equation}\label{eq:psiz-euclidean}
		\psi_z = (\la_1-\la_0)y_0y_0^\perp.
	\end{equation}
	The metric in $\Ee^3$ follows from \eqref{eq:spin-property-2}.
	
	To compute the Hopf differential, we first compute the normal. Write $f_x=f_z+f_\zbar$ and $f_y = i(f_z-f_\zbar)$ and recall \eqref{eq:cross-product-E3} to get
	\begin{equation}\label{eq:normal-fzfzbar}
		f_x\times f_y = -2if_z\times f_\zbar = -\left[f_z,f_\zbar\right].
	\end{equation}
	Use equations \eqref{eq:fz-euclidean}, \eqref{eq:psiz-euclidean}, \eqref{eq:spin-property-3} and \eqref{eq:spin-property-4} to get
	\begin{equation}\label{eq:N-euclidean}
		N = \I - 2\norm{y_0}^{-2}y_0y_0^*.
	\end{equation}
	To get the Hopf differential, compute
	\begin{equation*}
		f_{zz} = (\la_1-\la_0)(y_0y_0^\perp)_z.
	\end{equation*}
	Therefore, with \eqref{eq:spin-property-5},
	\begin{equation*}
		\scal{f_{zz}, N} = (\la_1-\la_0)\det(y_0, (y_0)_z).
	\end{equation*}
	By proposition~\ref{prop:q}, this gives the expected formula for the Hopf differential in $\Ee^3$.
	
	In hyperbolic space, we have
	\begin{equation}\label{eq:fz-hyperbolic}
		f_z = \Psi^*\left(\Psi_z\Psi\inv\right)\Psi,\quad f_\zbar=\Psi^*\left(\Psi_z\Psi\inv\right)^*\Psi.
	\end{equation}
	By \eqref{eq:diff-null-curve-hyperbolic} and with $y_1 := \Phi_{\la_1}x$,
	\begin{equation}\label{eq:psiz-hyperbolic}
		\Psi_z\Psi\inv = (\la_1-\la_0)y_1y_1^\perp.
	\end{equation}
	Because $\Psi$ acts as an isometry of $\Hh^3$,
	\begin{equation*}
		\scal{f_z,f_\zbar} = \scal{\Psi_z\Psi\inv, (\Psi_z\Psi\inv)^*} = \abs{\la_1-\la_0}^2\scal{y_1y_1^\perp,(y_1y_1^\perp)^*}.
	\end{equation*}
	The metric in $\Hh^3$ follows from \eqref{eq:spin-property-2}.
	
	To compute the normal in $\Hh^3$, we define $n$ with
	\begin{equation}\label{eq:orthonormal-frame-H3}
		(f_z,f_\zbar,N) =  \Psi^* (\Psi_z\Psi\inv, (\Psi_z\Psi\inv)^*, n) \Psi.
	\end{equation}
	By equations~\eqref{eq:psiz-hyperbolic}, \eqref{eq:spin-property-3} and \eqref{eq:spin-property-4}, 
	\begin{equation}\label{eq:n}
		n = \I - 2\norm{y_1}^{-2}y_1y_1^*.
	\end{equation}
	To get the Hopf differential, compute
	\begin{equation*}
		f_{zz} = (\la_1-\la_0)\Psi^*(y_1y_1^\perp)_z\Psi.
	\end{equation*}
	Then, by \eqref{eq:spin-property-5},
	\begin{equation*}
		\scal{f_{zz}, N} = (\la_1-\la_0)\det(y_1, (y_1)_z).
	\end{equation*}
	By proposition~\ref{prop:q}, this gives the expected formula for the Hopf differential in $\Hh^3$.
\end{proof}

\begin{remark}\label{remark:hopf}
	By proposition~\ref{prop:q} and \eqref{eq:q}, one can get the Hopf differential of $f$ given in \eqref{eq:hopf} without computing $\Phi$:
	\begin{equation*}
		Qdz^2 = (\la_1-\la_0)\det(x, Bx+x_z)dz^2.
	\end{equation*}
\end{remark}

\begin{remark}
	For fixed $(\la_0, \la_1)$ the immersions $f^\Ee$ and $f^\Hh$ induced by $\Phi$ are not necessarily isometric, but they have the same Hopf differential.
\end{remark}

\subsection{Gauss map}\label{sec:gauss-maps}

Let $f^\Hh\colon\Sigma\to\Hh^3$ be a CMC $1$ conformal immersion and let $N$ be the normal of $f$. For each $z\in\Sigma$, the Riemannian exponential map at $f(z)$ in the direction of $N(z)$ defines a geodesic $\gamma\colon\Rr\to\Hh^3$. In Minkowski space $\Rr^{1,4}$, this geodesic is the intersection between the hyperboloid $\Hh^3$ and the linear plane spanned by $f(z)$ and $N(z)$. As $t$ tends to infinity, the asymptotic direction of $\gamma(t)$ can be identified with the null line through $f(z)+N(z)$. This null-line has a unit-length representative in the tangent space at $\I$:
\begin{equation}\label{eq:null-line}
	v_N := \frac{2(f+N)}{\tr(f+N)} - \I \in  T_{\I}\Hh^3.
\end{equation}
We identify $\Ss^2 \subset T_{\I}\Hh^3$ with the Riemann sphere via the stereographic projection $\St\colon\Ss^2\to\Cc\cup\{\infty\}$ defined by
\begin{equation}\label{eq:stereo}
	\St\inv(u/v) := \I-2\norm{\nu}^{-2}\nu \nu^*,\quad \nu:=(u,v).
\end{equation} 

\begin{definition}
	The \textbf{hyperbolic Gauss} map of a CMC 1 immersion is the map $G=\St\circ v_N$. The \textbf{Gauss map} of a minimal immersion $f^\Ee$ is simply $G = \St(N)$ where $N$ is the unit normal of $f^\Ee$.
\end{definition}

\begin{theorem}\label{theorem:gauss-map}
	Let $\Phi$ be an LWR frame with potential $\xi$, let $x$ be a spinor for $\xi$ and let $y:=\Phi x$. Let $f^\Ee$ and $f^\Hh$ be the immersions induced by $\Phi$ at the evaluation points $(\la_0,\la_1)$. The Gauss map of $f^\Ee$ and the hyperbolic Gauss map of $f^\Hh$ are both given by
	\begin{equation*}
		G = \frac{u}{v},\quad (u,v):=y_{\la_0}.
	\end{equation*} 
\end{theorem}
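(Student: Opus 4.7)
My plan is to treat the two cases in parallel, starting from the formulas for the ambient normals already computed in the proof of Theorem~\ref{theorem:fundamental-forms}, and then comparing them with the stereographic projection formula~\eqref{eq:stereo}. In the Euclidean case, this is essentially immediate: equation~\eqref{eq:N-euclidean} reads
\begin{equation*}
    N = \I - 2\norm{y_{\la_0}}^{-2}y_{\la_0}y_{\la_0}^*,
\end{equation*}
which is exactly $\St\inv(u/v)$ with $(u,v) := y_{\la_0}$ by~\eqref{eq:stereo}. Applying $\St$ gives $G=u/v$.

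In the hyperbolic case, the ambient normal is $N = \Psi^* n \Psi$ with $n = \I - 2\norm{y_{\la_1}}^{-2}y_{\la_1}y_{\la_1}^*$ by~\eqref{eq:orthonormal-frame-H3} and~\eqref{eq:n}, so
\begin{equation*}
    f^\Hh + N = \Psi^*(\I + n)\Psi.
\end{equation*}
The key linear-algebra identity that I plan to verify by a direct $2\times 2$ computation is that for any $w=(u,v)\in\Cc^2$ one has $\norm{w}^2 \I - w w^* = (w^\perp)^* w^\perp$. Applied to $w=y_{\la_1}$, this yields $\I + n = 2\norm{y_{\la_1}}^{-2}(y_{\la_1}^\perp)^* y_{\la_1}^\perp$, and therefore
\begin{equation*}
    f^\Hh + N = 2\norm{y_{\la_1}}^{-2}\Psi^*(y_{\la_1}^\perp)^* y_{\la_1}^\perp \Psi.
\end{equation*}

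Now I would use the crucial property~\eqref{eq:spin-property-1} of $(\cdot)^\perp$. Writing $y_{\la_j}=\Phi_{\la_j}x$ and recalling $\Psi=\Phi_{\la_1}\Phi_{\la_0}\inv$, we get $y_{\la_1}^\perp\Psi = x^\perp \Phi_{\la_1}\inv\Phi_{\la_1}\Phi_{\la_0}\inv = x^\perp\Phi_{\la_0}\inv = y_{\la_0}^\perp$, and by taking adjoints $\Psi^*(y_{\la_1}^\perp)^* = (y_{\la_0}^\perp)^*$. Substituting back and using $\tr(y_{\la_0}^\perp)^* y_{\la_0}^\perp = \norm{y_{\la_0}}^2$, the trace of $f^\Hh + N$ simplifies to $2\norm{y_{\la_1}}^{-2}\norm{y_{\la_0}}^2$, and the definition~\eqref{eq:null-line} of $v_N$ collapses to
\begin{equation*}
    v_N = \frac{2(y_{\la_0}^\perp)^*y_{\la_0}^\perp}{\norm{y_{\la_0}}^2} - \I = \I - 2\norm{y_{\la_0}}^{-2}y_{\la_0}y_{\la_0}^*,
\end{equation*}
after applying the same linear-algebra identity in reverse. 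Comparing with~\eqref{eq:stereo} gives $v_N = \St\inv(u/v)$ with $(u,v):=y_{\la_0}$, and the formula $G = u/v$ follows.

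The only place where there is any real content is the cancellation that sends the $y_{\la_1}$-dependence of the ambient normal to a $y_{\la_0}$-dependent expression for the hyperbolic Gauss map; this relies entirely on~\eqref{eq:spin-property-1}. Everything else is a coordinate computation. The main thing to watch out for is consistent conventions: $x$ is a column vector, $x^\perp$ is a row vector, and the adjoint $(\cdot)^*$ swaps rows and columns while conjugating, so the identity $(\Phi x)^\perp = x^\perp\Phi\inv$ used as $\Psi^*(y_{\la_1}^\perp)^* = (y_{\la_0}^\perp)^*$ requires one to take $^*$ of~\eqref{eq:spin-property-1} with $\Phi=\Psi$. No further obstacle is expected.
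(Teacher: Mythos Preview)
Your proof is correct and follows essentially the same route as the paper's. The only cosmetic difference is that the paper packages your identity $\norm{w}^2\I - ww^* = (w^\perp)^*w^\perp$ via the adjugate, writing $\I+n = 2\norm{y_{\la_1}}^{-2}\widehat{y_{\la_1}y_{\la_1}^*}$ and using $\Psi^*\widehat{y_{\la_1}y_{\la_1}^*}\Psi = \widehat{y_{\la_0}y_{\la_0}^*}$ in place of your spinor step $y_{\la_1}^\perp\Psi = y_{\la_0}^\perp$; since $(w^\perp)^*w^\perp = \widehat{ww^*}$, the two computations coincide.
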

\begin{proof}	
	In $\Ee^3$, the formula is directly given by \eqref{eq:N-euclidean}.
	
	In $\Hh^3$, use the definition of $f$ in \eqref{eq:immersions} and recall \eqref{eq:orthonormal-frame-H3} to get
	\begin{equation*}
		f+N = \Psi^*(\I + n)\Psi.
	\end{equation*}
	Note that for all $x\in\Cc^2$,
	\begin{equation*}
		\widehat{xx^*} = \I - \norm{x}^{-2}xx^*.
	\end{equation*}
	Therefore, with \eqref{eq:n}:
	\begin{equation*}
		\I + n = 2\norm{y_1}^{-2}\widehat{y_1y_1^*}.
	\end{equation*}
	Noting that 
	\begin{equation*}
		\Psi^*\widehat{y_1y_1^*}\Psi = \widehat{y_0y_0^*}, \quad y_0:=y_{\la_0},
	\end{equation*}
	we get
	\begin{equation*}
		f+N = 2\norm{y_1}^{-2}\widehat{y_0y_0^*}.
	\end{equation*}
	Noting that $\tr(\widehat{xx^*})=1$ for all $x\in\Cc^2$,
	\begin{align*}
		v_N &= \frac{2(f+N)}{\tr(f+N)} - \I \\
		&= 2\widehat{y_0y_0^*} - \I \\
		&= 2\I - 2\norm{y_0}^{-2}y_0y_0^* - \I\\
		&=\St\inv(u/v).\qedhere
	\end{align*}
\end{proof}

\begin{corollary}\label{corollary:gaussmap}
	The Schwarzian derivative of $G$ satisfies
	\begin{equation*}
		\sS[G] = \la_0 q + s
	\end{equation*}
	where $q$ and $s$ are as in theorem~\ref{theorem:schwarz-potential}.
\end{corollary}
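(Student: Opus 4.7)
The plan is to combine Theorem~\ref{theorem:gauss-map} with Corollary~\ref{corrolary:schwarz-spinor}: the former identifies $G$ as a specific ratio of components of $y_{\la_0}$, and the latter gives the Schwarzian derivative of such a ratio as a function of $\la$.

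More precisely, fix an LWR frame $\Phi$ with potential $\xi$, a spinor $x$ for $\xi$, and set $y := \Phi x = (u,v)$ (where $u,v$ depend on both $z$ and $\la$). By Theorem~\ref{theorem:gauss-map}, the Gauss map (resp. hyperbolic Gauss map) is the function $z \mapsto (u/v)|_{\la = \la_0}$. By Corollary~\ref{corrolary:schwarz-spinor}, the Schwarzian derivative (with respect to $z$) satisfies, as an identity of functions of $z$ for each fixed $\la$,
\begin{equation*}
	\sS[u/v] = \la q + s.
\end{equation*}
Evaluating this identity at $\la = \la_0$ and observing that the Schwarzian derivative is taken in the variable $z$ only (so evaluation at $\la = \la_0$ commutes with $\sS$), one obtains
\begin{equation*}
	\sS[G] = \sS[(u/v)|_{\la=\la_0}] = (\la q + s)|_{\la=\la_0} = \la_0 q + s.
\end{equation*}

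There is no serious obstacle here; the statement is essentially the specialization of Corollary~\ref{corrolary:schwarz-spinor} at the evaluation point $\la_0$, reinterpreted geometrically via Theorem~\ref{theorem:gauss-map}. The only point that deserves a brief remark in the write-up is that the Schwarzian acts only on the $z$-dependence, so it is unaffected by fixing $\la$ at $\la_0$ after (or before) taking $\sS$.
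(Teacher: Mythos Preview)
Your proof is correct and follows the same route as the paper: the statement is indeed the specialization of Corollary~\ref{corrolary:schwarz-spinor} at $\la=\la_0$, combined with the identification $G=(u/v)|_{\la=\la_0}$ from Theorem~\ref{theorem:gauss-map}. The paper's own proof is the one-liner ``direct consequence of Corollary~\ref{corrolary:schwarz-spinor}''; your version just makes the two ingredients and the commutation of $\sS$ with evaluation in $\la$ explicit.
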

\begin{proof}
	This is a direct consequence of corollary~\ref{corrolary:schwarz-spinor}.
\end{proof}
 

\section{Related surfaces}\label{sec:related-surfaces}

Starting with an LWR potential $\xi$, in order to make an immersion via the LWR, one has to choose a pair of evaluation points $(\la_0, \la_1)$ and an initial condition $\Phi(z_0)$ for the LWR frame $\Phi$. This section investigates the consequences of these choices. We first focus on the evaluation points and show that moving them describes associated families. We then show how changing the initial condition for the LWR frame amounts to Goursat transformations of the induced surfaces. Noting that rigid motions induce a special class of Goursat transformations, we study them in this section.

\subsection{Moving the evaluation points}

We fix an LWR frame $\Phi$ and consider the evaluation points $(\la_0, \la_1)$ as free parameters. We want to study how the immersions $f^\Ee_{(\la_0, \la_1)}$ and $f^\Hh_{(\la_0, \la_1)}$ induced by LWR at $(\la_0, \la_1)$ change as $\la_0$ or $\la_1$ moves in $\Cc$.

\subsubsection{Associated family}

The classical associated family is parametrized by $\la\in\Ss^1\subset\Cc$. We extend this family to any $\la\in\Cc^*$.

\begin{definition}
	Let $f$ be a minimal or CMC $1$ conformal immersion with metric $ds^2$ and Hopf differential $Qdz^2$. The \textbf{complex associated family} $(f_\la)_{\la\in\Cc^*}$ of $f$ is the family of minimal or CMC $1$ immersions whose metric and Hopf differential read
	\begin{equation*}
		ds_\la^2 = \abs{\la}^2ds^2,\quad Q_\la dz^2 = \la Qdz^2
	\end{equation*}
	for all $\la\in \Cc^*$. By Bonnet's theorem, the complex associated family is defined up to ambient isometries.
\end{definition}

\begin{theorem}\label{theorem:complex-associated-family}
	The complex associated family of $f^\Ee$ is obtained by fixing $\la_0$ and moving $\la_1$. The complex associated family of $f^\Hh$ is obtained by fixing $\la_1$ and moving $\la_0$. 
\end{theorem}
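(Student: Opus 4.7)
The plan is to read off the scaling of the metric and Hopf differential directly from Theorem~\ref{theorem:fundamental-forms} and observe that each corresponds to moving exactly one of the two evaluation points. The point of the proof is just the observation that the spinor evaluated at $\la_0$ does not depend on $\la_1$ (and vice versa), so one of the two parameters contributes to the immersion only through the overall factor $(\la_1-\la_0)$.

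First I would fix an LWR frame $\Phi$ and write $y_\la := \Phi_\la x$ for a spinor $x$ of the underlying potential. By Theorem~\ref{theorem:fundamental-forms}, the immersion $f^\Ee_{(\la_0,\la_1)}$ has metric and Hopf differential
\begin{equation*}
ds^2 = |\la_1-\la_0|^2\,\norm{y_{\la_0}}^4\abs{dz}^2,\qquad Qdz^2 = (\la_1-\la_0)\det(y_{\la_0},(y_{\la_0})_z)dz^2.
\end{equation*}
Now fix $\la_0$ and a reference value $\la_1^\ast$, and let $\la_1$ vary; set $\mu := (\la_1-\la_0)/(\la_1^\ast-\la_0)\in\Cc^\ast$. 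Since $y_{\la_0}$ is independent of $\la_1$, comparing with the analogous formulas for $f^\Ee_{(\la_0,\la_1^\ast)}$ gives
\begin{equation*}
ds^2_{(\la_0,\la_1)} = |\mu|^2\,ds^2_{(\la_0,\la_1^\ast)},\qquad Q_{(\la_0,\la_1)}dz^2 = \mu\,Q_{(\la_0,\la_1^\ast)}dz^2.
\end{equation*}
This is exactly the scaling that defines the complex associated family, and by Bonnet's theorem the immersion is determined up to ambient isometry by its first and second fundamental forms, so $\{f^\Ee_{(\la_0,\la_1)}\}_{\la_1\in\Cc\setminus\{\la_0\}}$ is the complex associated family of $f^\Ee_{(\la_0,\la_1^\ast)}$.

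For the hyperbolic statement I would argue symmetrically. By Theorem~\ref{theorem:fundamental-forms} the metric of $f^\Hh_{(\la_0,\la_1)}$ is $|\la_1-\la_0|^2\norm{y_{\la_1}}^4|dz|^2$, whose dependence on $\la_0$ is only through the scalar factor, and the Hopf differential is again $(\la_1-\la_0)qdz^2$ with $q$ independent of $\la_0$ by Proposition~\ref{prop:q}. Fixing $\la_1$ and a reference $\la_0^\ast$, setting $\mu := (\la_1-\la_0)/(\la_1-\la_0^\ast)$ rescales the metric by $|\mu|^2$ and the Hopf differential by $\mu$, so Bonnet's theorem again identifies the family $\{f^\Hh_{(\la_0,\la_1)}\}_{\la_0}$ with the complex associated family of $f^\Hh_{(\la_0^\ast,\la_1)}$.

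There is essentially no obstacle here beyond bookkeeping: the entire content was packaged into Theorem~\ref{theorem:fundamental-forms} and Proposition~\ref{prop:q}, which already ensure that $y_{\la_0}$ and $q$ are blind to $\la_1$ (and $y_{\la_1}$ blind to $\la_0$). The only mildly subtle point worth emphasizing is this asymmetry between $f^\Ee$ and $f^\Hh$, which is the reason why moving $\la_1$ yields the associated family in the Euclidean case but moving $\la_0$ is required in the hyperbolic case.
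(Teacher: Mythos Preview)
Your proof is correct and is exactly the approach the paper takes: the paper's own proof is the single line ``It is a direct consequence of theorem~\ref{theorem:fundamental-forms},'' and you have simply unpacked that consequence explicitly, including the appeal to Bonnet's theorem that is implicit in the definition of the complex associated family.
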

\begin{proof}
	It is a direct consequence of theorem~\ref{theorem:fundamental-forms}.
\end{proof}

\subsubsection{Dual associated family}

\begin{definition}[\cite{umehara-yamada1997}]
	Let ${f} = {\Psi}^*{\Psi} $ be a conformal, CMC $1$ immersion into $\Hh^3$. The immersion $f^\sharp = (\Psi\inv)^*\Psi\inv$ is the \textbf{dual} of $f$. It is also a conformal, CMC $1$ immersion into $\Hh^3$.
\end{definition}

\begin{theorem}\label{theorem:dual}
	The dual of $f^\Hh$ is obtained by permuting $\la_0$ and $\la_1$.
\end{theorem}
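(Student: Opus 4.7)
The plan is to unwind the definitions and observe that permuting $\la_0$ and $\la_1$ inverts the relevant null curve.

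First I would recall from \eqref{eq:null-curves} that the hyperbolic null curve associated to the LWR data at evaluation points $(\la_0,\la_1)$ is $\Psi = \Phi_{\la_1}\Phi_{\la_0}\inv$, and the induced CMC~$1$ immersion from \eqref{eq:immersions} is $f^\Hh = \Psi^*\Psi$. Let $\tilde{\Psi}$ denote the null curve obtained by swapping the evaluation points, that is by evaluating LWR at $(\la_1,\la_0)$. Then directly from the definition, $\tilde{\Psi} = \Phi_{\la_0}\Phi_{\la_1}\inv = \Psi\inv$.

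Substituting into \eqref{eq:immersions} gives the induced immersion
\begin{equation*}
    \tilde{f}^\Hh = \tilde{\Psi}^*\tilde{\Psi} = (\Psi\inv)^*\Psi\inv,
\end{equation*}
which is precisely the dual $f^\sharp$ of $f^\Hh$ as defined just above the theorem.

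There is essentially no obstacle here; the statement follows by inspection once one notices the antisymmetry $\tilde{\Psi} = \Psi\inv$ in the evaluation points. The only point worth remarking on is that the construction makes sense: since $\Phi_\la \in \SL_2\Cc$ for every $\la$, the matrix $\Psi\inv$ is again in $\SL_2\Cc$, and lemma~\ref{lemma:null-curves} applied with the evaluation points $(\la_1,\la_0)$ confirms that $\tilde{\Psi}$ is indeed a holomorphic null curve, so that $\tilde{f}^\Hh$ is a bona fide conformal CMC~$1$ immersion by theorem~\ref{theorem:immersions}.
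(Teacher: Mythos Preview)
Your proof is correct and follows exactly the same approach as the paper's own proof: permuting the evaluation points inverts the null curve $\Psi$, which by definition yields the dual immersion. The paper states this in a single sentence, while you have simply unwound the definitions explicitly.
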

\begin{proof}
	By \eqref{eq:null-curves}, permuting the evaluation points amounts to taking the inverse null curve, which is what duality does.
\end{proof}

\begin{definition}\label{def:dual-assoc-family}
	The \textbf{dual associated family} of a conformal, CMC $1$ immersion $f$ into $\Hh^3$ is the family $(\hat{f}_\la)_{\la\in\Cc^*}$ defined by
	\begin{equation*}
		\hat{f}_\la := \left((f^\sharp)_\la\right)^\sharp
	\end{equation*}
	where $(\cdot)^\sharp$ denotes the dual and $(\cdot)_\la$ denotes the complex associated family.
\end{definition}

\begin{corollary}
	The dual associated family of $f^\Hh$ is obtained by fixing $\la_0$ and moving $\la_1$. 
\end{corollary}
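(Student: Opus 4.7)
The plan is to compose the three operations that appear in Definition~\ref{def:dual-assoc-family} of the dual associated family $\hat{f}_\mu = ((f^\Hh)^\sharp)_\mu^{\sharp}$, and simply track how each one acts on the ordered pair of evaluation points $(\la_0,\la_1)$ using Theorems~\ref{theorem:complex-associated-family} and~\ref{theorem:dual}.

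First, by Theorem~\ref{theorem:dual}, the outer dual $(f^\Hh)^\sharp$ is the LWR immersion built from the same LWR data $(\Sigma,\xi,\Phi)$ but with evaluation points $(\la_1,\la_0)$. Next, Theorem~\ref{theorem:complex-associated-family} says that the complex associated family of a CMC~1 LWR surface is obtained by fixing the \emph{second} evaluation point and moving the \emph{first}. Applying this to $(f^\Hh)^\sharp$ with evaluation points $(\la_1,\la_0)$, it is $\la_0$ that is held fixed and the slot previously occupied by $\la_1$ that becomes a new parameter $\tilde\la_1$ depending on $\mu$. Thus $((f^\Hh)^\sharp)_\mu$ has evaluation points $(\tilde\la_1,\la_0)$. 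Finally, dualizing once more via Theorem~\ref{theorem:dual} permutes these back to $(\la_0,\tilde\la_1)$. Compared with the original $(\la_0,\la_1)$, the first evaluation point is unchanged and only the second one moves with $\mu$, which is exactly the statement of the corollary.

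The only subtlety — and it is not really an obstacle — is bookkeeping: Theorem~\ref{theorem:complex-associated-family} is asymmetric in the two evaluation points (the ``fixed'' slot is opposite for $\Ee^3$ and $\Hh^3$), so one must be careful when composing it with the swap coming from duality. To confirm that $\mu\mapsto\tilde\la_1$ really sweeps out the whole family and is not just some proper subset, one can appeal to the explicit formulas of Theorem~\ref{theorem:fundamental-forms}: matching metrics and Hopf differentials yields $\tilde\la_1 = \la_0+\mu(\la_1-\la_0)$, an affine bijection of $\Cc^*$ onto $\Cc\setminus\{\la_0\}$. This makes the correspondence between the parameter $\mu$ of the dual associated family and the motion of $\la_1$ completely explicit.
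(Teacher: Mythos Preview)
Your proof is correct and follows exactly the same approach as the paper, which simply says ``Apply successively theorem~\ref{theorem:dual}, theorem~\ref{theorem:complex-associated-family} and theorem~\ref{theorem:dual}.'' Your version spells out the bookkeeping of evaluation points in more detail and adds the explicit affine correspondence $\tilde\la_1=\la_0+\mu(\la_1-\la_0)$, which is correct but not required.
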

\begin{proof}
	Apply successively theorem~\ref{theorem:dual}, theorem~\ref{theorem:complex-associated-family} and theorem~\ref{theorem:dual}.
\end{proof}

\begin{figure}[h]
	\centering
	\begin{subfigure}{0.19\textwidth}
		\includegraphics[width=\linewidth]{./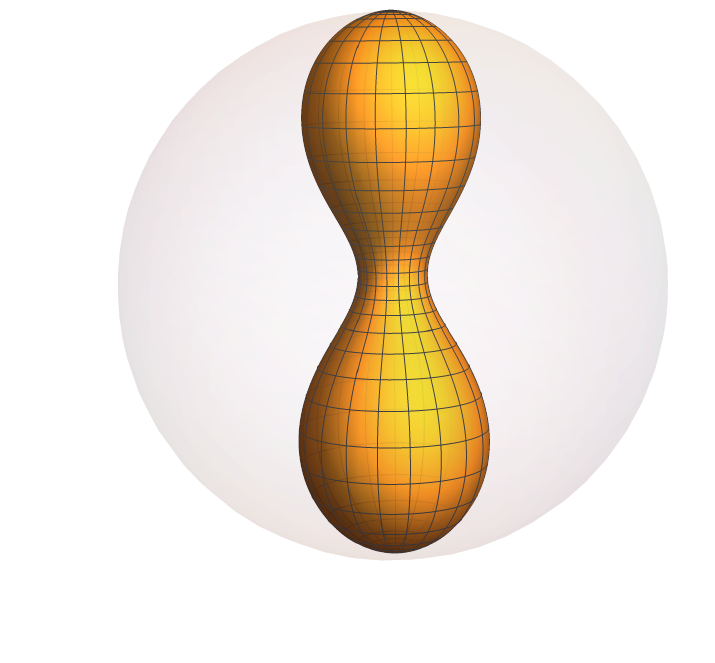}
		\caption{$\la_1 = 1$}
		\label{fig:cat1}
	\end{subfigure}
	\hfill
	\begin{subfigure}{0.19\textwidth}
		\includegraphics[width=\linewidth]{./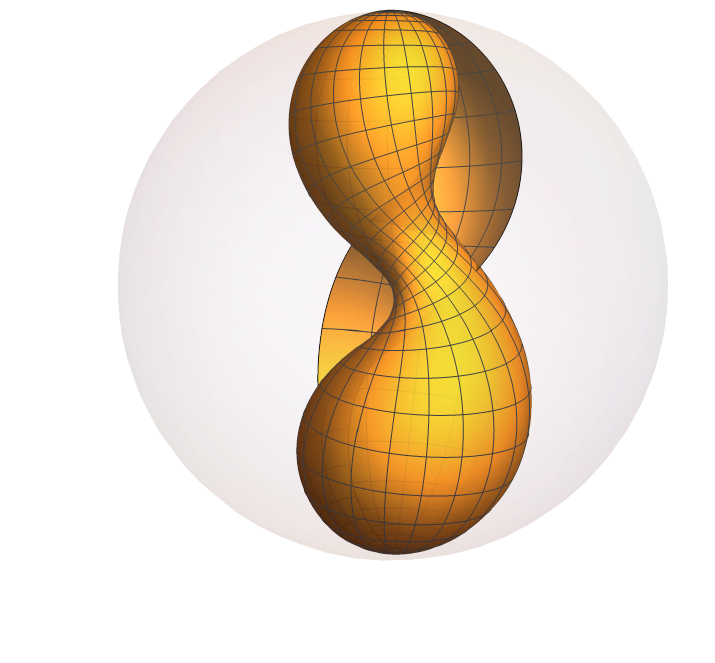}
		\caption{$\la_1 = e^{\frac{i\pi}{4}}$}
		\label{fig:cat2}
	\end{subfigure}
	\hfill
	\begin{subfigure}{0.19\textwidth}
		\includegraphics[width=\linewidth]{./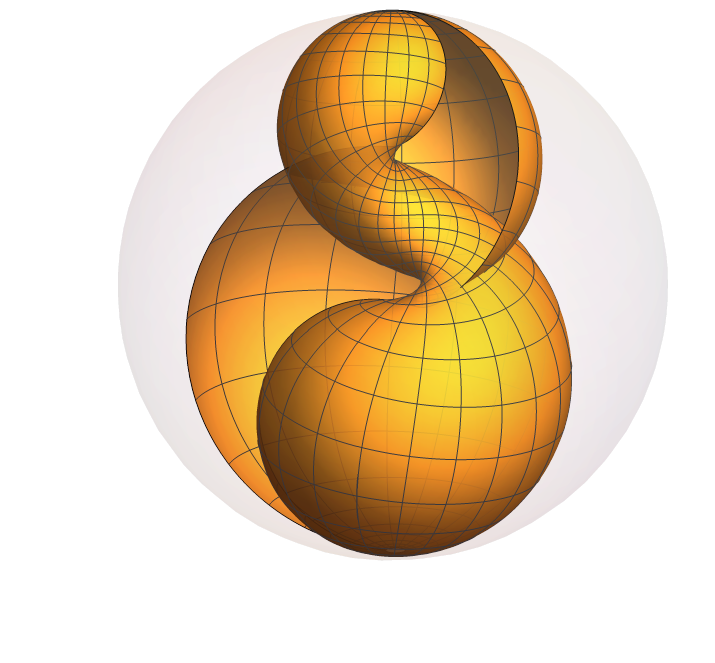}
		\caption{\centering$\la_1 = i$}
		\label{fig:cat3}
	\end{subfigure}
	\hfill
	\begin{subfigure}{0.19\textwidth}
		\includegraphics[width=\linewidth]{./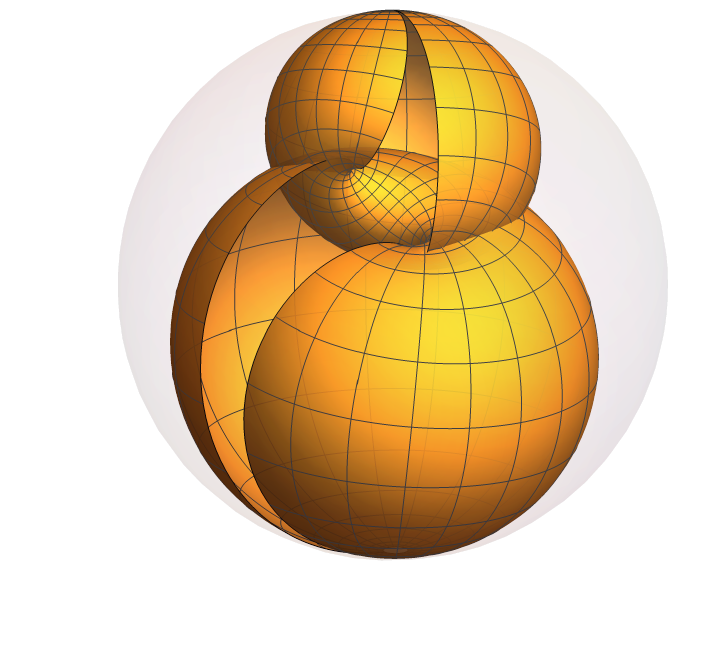}
		\caption{$\la_1 = e^{\frac{3i\pi}{4}}$}
		\label{fig:cat4}
	\end{subfigure}
	\hfill
	\begin{subfigure}{0.19\textwidth}
		\includegraphics[width=\linewidth]{./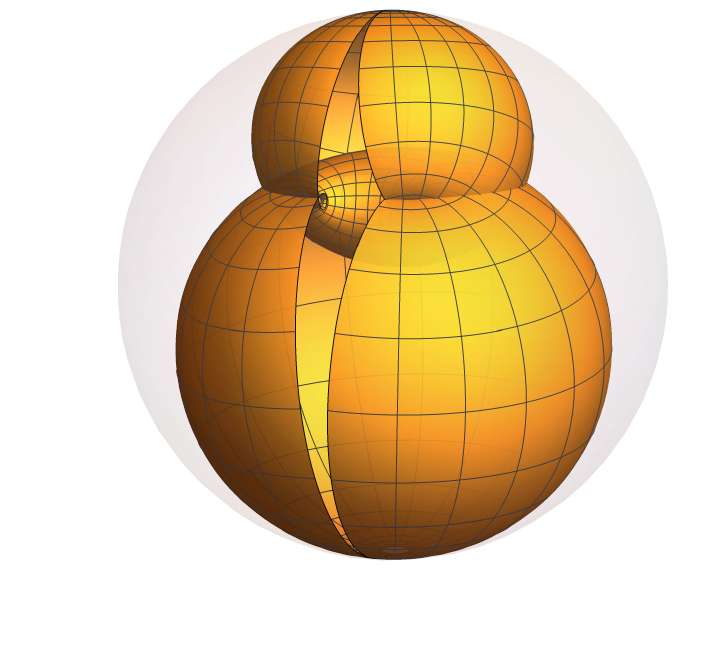}
		\caption{$\la_1 = -1$}
		\label{fig:cat5}
	\end{subfigure}
	\caption{Five members in the dual associated family of a catenoid (definition~\ref{def:dual-assoc-family}).\\
	The evaluation point $\la_0=0$ is fixed while $\la_1$ moves around the origin. All these immersions have the same hyperbolic Gauss map.}
	\label{fig:dual-assoc-cat}
\end{figure}

The following proposition shows that a minimal immersion can be obtained as the blow-up limit in the dual associated family of a CMC $1$ immersion. 

\begin{proposition}[Blow-up limit of CMC $1$ surfaces]\label{proposition:blow-up}
	Let $\gamma\colon(-\epsilon,\epsilon)\to \Cc$ such that $\gamma(0) = \la_0$ and ${\gamma}'(0) = \la_1-\la_0\neq 0$. Let $\Phi$ be an LWR frame and let $\Psi_t := \Phi_{\gamma(t)}\Phi_{\gamma(0)}\inv$. Define $f^\Hh_t := \Psi_t^*\Psi_t$. Then
	\begin{equation*}
		\frac{d}{dt}\left(f^\Hh_t\right)_{\mid t=0} = f^\Ee,
	\end{equation*}
	where $f^\Ee$ is the minimal immersion induced by the LWR frame at the evaluation points $(\la_0, \la_1)$.
\end{proposition}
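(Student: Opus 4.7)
The plan is a direct chain-rule calculation, keyed on the observation that $\Psi_0 = \Phi_{\gamma(0)}\Phi_{\gamma(0)}^{-1} = \I$. This makes $t = 0$ a particularly clean point at which to differentiate $f_t^\Hh = \Psi_t^* \Psi_t$: the Leibniz rule collapses to
\begin{equation*}
    \frac{d}{dt}\bigl(\Psi_t^*\Psi_t\bigr)_{\mid t=0} = \dot{\Psi}_0^*\,\Psi_0 + \Psi_0^*\,\dot{\Psi}_0 = \dot{\Psi}_0 + \dot{\Psi}_0^{\,*},
\end{equation*}
where I write $\dot{(\cdot)}$ for $\frac{d}{dt}$ on $t$-dependent objects.

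Next, I would compute $\dot{\Psi}_0$ by the chain rule, using the hypothesis $\gamma(0) = \la_0$ and $\gamma'(0) = \la_1 - \la_0$:
\begin{equation*}
    \dot{\Psi}_0 = \frac{d}{dt}\bigl[\Phi_{\gamma(t)}\bigr]_{\mid t=0}\Phi_{\la_0}^{-1} = \gamma'(0)\,\bigl(\partial_\la \Phi\bigr)_{\la_0}\Phi_{\la_0}^{-1} = (\la_1 - \la_0)\bigl(\dot{\Phi}\Phi^{-1}\bigr)_{\la_0}.
\end{equation*}
By the definition \eqref{eq:null-curves} of the Euclidean null curve, the right-hand side equals $\psi$.

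Substituting back into the first display and invoking the definition \eqref{eq:immersions} of $f^\Ee$ gives
\begin{equation*}
    \frac{d}{dt}\bigl(f_t^\Hh\bigr)_{\mid t=0} = \psi + \psi^* = f^\Ee,
\end{equation*}
which is the desired identity. There is no real obstacle here; the only subtlety worth flagging is that the statement is a pointwise equality of maps $\widetilde{\Sigma} \to \hH_2$, with both sides landing in the tangent space $T_\I \Hh^3 = \Ee^3 = i\cdot\su_2$ (the left side does so automatically because $\Psi_0 = \I$ makes $t \mapsto f_t^\Hh$ a curve in $\Hh^3$ through $\I$, whose velocity is tangent to $\Hh^3$ there). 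The whole argument is three lines once one notices that $\Psi_0 = \I$ is precisely the algebraic input needed to promote the group-level null curve $\Psi$ to the Lie-algebra-level null curve $\psi$, consistently with the motivational paragraph in the introduction.
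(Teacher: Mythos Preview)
Your proof is correct and essentially identical to the paper's own argument: both compute $\dot\Psi_0 = (\la_1-\la_0)(\dot\Phi\Phi^{-1})_{\la_0} = \psi$ via the chain rule and then apply the Leibniz rule to $\Psi_t^*\Psi_t$ at $t=0$ using $\Psi_0=\I$. The only cosmetic difference is the order of presentation (you do Leibniz first, the paper does the chain rule first).
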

\begin{proof}
	Compute
	\begin{equation*}
		\frac{d}{dt}\left(\Phi_{\gamma(t)}\right)_{\mid t=0} = {\gamma}'(0)\dot{\Phi}_{\gamma(0)} = (\la_1-\la_0)\dot{\Phi}_{\la_0}.
	\end{equation*}
	Therefore,
	\begin{equation*}
		\frac{d}{dt}(\Psi_t)_{\mid t=0} = (\la_1-\la_0)\dot{\Phi}_{\la_0}\Phi_{\la_0}\inv = \psi
	\end{equation*}
	where $\psi$ is the holomorphic null curve induced by the LWR frame $\Phi$ at the evaluation points $(\la_0, \la_1)$.
	Furthermore,
	\begin{equation*}
		\frac{d}{dt}(f_t^\Hh)_{\mid{t=0}} = \frac{d}{dt}(\Psi_t)_{\mid t=0}^* +  \frac{d}{dt}(\Psi_t)_{\mid t=0} = \psi^* + \psi = f^\Ee
	\end{equation*}
	because $\Psi_0 = \I$.
\end{proof}

\subsection{Holomorphic dressing}

\begin{definition}
	Let $\Phi$ be an LWR frame and let $g_\la\in\SL_2\Cc$ be constant in $z$ (but depending on $\la$). 
	\begin{itemize}
		\item \textbf{Dressing} $\Phi$ by $g$ is finding a map $h$ (depending on both $z$ and $\la$) such that $\hat{\Phi}:=g\Phi h\inv$ is an LWR frame.
		\item If $g$ is holomorphic in $\la\in\Cc$, a dressing is given by choosing $h=\I$. In this case, the map $\Phi\mapsto\hat{\Phi}$ is a \textbf{holomorphic dressing}. Note that holomorphic dressing is a group action of $\SL_2\Cc$-valued $\la$-holomorphic maps on LWR frames.
	\end{itemize}
\end{definition}

In this section, we study holomorphic dressing and show that they induce Goursat transformations (including rigid motions). We will consider simple factor dressing in section~\ref{sec:simple-factor-dressing}.

\begin{definition}[\cite{Hertrich-Jeromin_2003}, Lemma 5.3.1]
	Two minimal or CMC $1$ immersions are related by a \textbf{Goursat transformation} if they have the same Hopf differential and if the Schwarzian derivative of their Gauss maps are equal.
\end{definition}

\begin{theorem}\label{theorem:goursat}
	Holomorphic dressing induces a Goursat transformation of the induced immersion $f$. Moreover, any Goursat transformation of $f$ can be obtained by holomorphic dressing, provided that $f$ is not flat.
\end{theorem}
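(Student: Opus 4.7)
\emph{Forward direction.} The plan is first to observe that a holomorphic dressing leaves the LWR potential literally unchanged, and then to leverage the LWR-gauge invariance of $q$ together with the Möbius-invariance of the Schwarzian. If $\hat{\Phi} := g\Phi$ with $g = g(\la)$ holomorphic in $\la$, then, because $g$ is independent of $z$, $\hat{\Phi}\inv d\hat{\Phi} = \Phi\inv d\Phi = \xi$. By Proposition~\ref{prop:q}, $q$ depends only on the gauge class of $\xi$, hence $\hat{q} = q$ and Theorem~\ref{theorem:fundamental-forms} yields $\hat{Q} = (\la_1-\la_0)q = Q$. For the Gauss maps, Theorem~\ref{theorem:gauss-map} combined with $(\hat{u},\hat{v}) = g_{\la_0}(u,v)$ shows that $\hat{G}$ is the image of $G$ under the Möbius transformation associated with $g_{\la_0} \in \SL_2\Cc$; Möbius-invariance of $\sS$ then gives $\sS[\hat{G}] = \sS[G]$. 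These two equalities are precisely the definition of a Goursat transformation.

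\emph{Converse direction.} Assuming $f$ is not flat, the plan is to exhibit, for any Goursat transform $\tilde{f}$ of $f$, a $g = g(\la)$ making $g\Phi$ an LWR frame inducing $\tilde{f}$. By Theorem~\ref{theorem:immersions}, choose an LWR frame $\tilde{\Phi}$ with potential $\tilde{\xi}$ inducing $\tilde{f}$ at the same evaluation points $(\la_0,\la_1)$ as $\Phi$. Applying Theorem~\ref{theorem:fundamental-forms} and Corollary~\ref{corollary:gaussmap} to both frames, the equalities $\tilde{Q}=Q$ and $\sS[\tilde{G}]=\sS[G]$ rewrite as $(\la_1-\la_0)\tilde{q}=(\la_1-\la_0)q$ and $\la_0\tilde{q}+\tilde{s}=\la_0 q + s$, giving $\tilde{q}=q$ and $\tilde{s}=s$. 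Non-flatness of $f$ means $q \not\equiv 0$, so $\xi$ and $\tilde{\xi}$ are non-degenerate, and Theorem~\ref{theorem:schwarz-potential} furnishes LWR gauges $k,\tilde{k}$ bringing both potentials to the same Schwarz form $\smatrix{0}{1}{\la q + s}{0}dz$. The gauged frames $\Phi k$ and $\tilde{\Phi}\tilde{k}$ then satisfy the same ODE, hence differ by a $z$-independent factor $g(\la) := (\tilde{\Phi}\tilde{k})(z_0)\,(\Phi k)(z_0)\inv$, which inherits $\la$-holomorphy from the frames. Since LWR gauges do not alter the induced immersion (Lemma~\ref{lemma:gauging-lwr}), the dressed frame $g\Phi$ induces the same surface as $g\Phi k = \tilde{\Phi}\tilde{k}$, namely $\tilde{f}$.

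\emph{Main obstacle.} The delicate step is the uniqueness clause of Theorem~\ref{theorem:schwarz-potential}, which requires $q\not\equiv 0$: without it, the Schwarz invariant $s$ is not determined by the immersion, so one cannot conclude that $\xi$ and $\tilde{\xi}$ share a common Schwarz form, and the purely $\la$-dependent relation between the gauged frames can fail. This is exactly why the statement excludes flat surfaces. The forward implication, by contrast, is comparatively painless because holomorphic dressing preserves the entire LWR potential on the nose.
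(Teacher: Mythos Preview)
Your proof is correct and follows essentially the same route as the paper's: in the forward direction you observe that holomorphic dressing leaves $\xi$ unchanged and hence preserves $Q$, while the Gauss map changes by the M\"obius transformation determined by $g_{\la_0}$; in the converse direction you bring both potentials to their (common) Schwarz form via Theorem~\ref{theorem:schwarz-potential} and read off the dressing as the quotient of initial conditions. One small remark: in your ``Main obstacle'' paragraph you attribute the need for non-flatness to the \emph{uniqueness} clause of Theorem~\ref{theorem:schwarz-potential}, but it is really the \emph{existence} of the Schwarz form that requires non-degeneracy---without $q\not\equiv 0$ you cannot gauge to Schwarz form at all, so the comparison of $s$-invariants never gets off the ground.
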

\begin{proof}
	Holomorphic dressing amounts to changing the initial condition $\Phi(z_0)$ in the initial value problem~\eqref{eq:cauchy-problem}. Therefore, it does not change the potential $\xi$. By remark~\ref{remark:hopf}, the Hopf differential is unchanged.
	Moreover, by theorem~\ref{theorem:fundamental-forms}, the Gauss map $\widehat{G}$ of the dressed immersion $\widehat{f}$ reads
	\begin{equation*}
		\widehat{G} = \frac{a G + b}{c G + d},\quad R_{\la_0} = \matrix{a}{b}{c}{d}\in\SL_2\Cc
	\end{equation*}
	where $G$ is the Gauss map of $f$. Therefore, $\widehat{G}$ and $G$ differ by a M\"obius transformation, so they have the same Schwarzian derivative.
	
	Conversely, let $\widehat{f}$ be a Goursat transformation of $f$. 
	The immersions are not flat, so their respective potential can be gauged into a Schwarz potential (by theorem~\ref{theorem:schwarz-potential}). By corollary~\ref{corrolary:schwarz-spinor}, these Schwarz potentials agree because the respective Hopf differential and Schwarzian derivative of the Gauss map agree for a Goursat pair. Therefore, the gauged LWR frames agree, up to a choice of initial condition, and one can be obtained from the other via a holomorphic dressing.
\end{proof}

\subsection{Rigid motions}

\begin{definition}
	Let $f\colon\Sigma\to\Xx^3$ be an immersion. A \textbf{rigid motion} of $f$ is an immersion $\widehat{f}\colon\Sigma\to\Xx^3$ such that there exists an orientation-preserving isometry $\jJ$ of $\Xx^3$ satisfying $\widehat{f} = \jJ\circ f$. 
\end{definition}

\begin{theorem}\label{theorem:rigid-motions}
	Let $f$ be the immersion induced by the LWR frame $\Phi$ at $(\la_0, \la_1)$ and let $\hat{\Phi} = R\Phi$ be a holomorphic dressing of $\Phi$.
	\begin{itemize}
		\item In $\Ee^3$, if $R_{\la_0}\in\SU_2$, then the immersion induced by $\hat\Phi$ is a rigid motion of $f$.
		\item In $\Hh^3$, if $R_{\la_1}\in\SU_2$, then the immersion induced by $\hat\Phi$ is a rigid motion of $f$.
		\item Any rigid motion of $f$ can be obtained this way, provided that $f$ is not flat.
	\end{itemize}
\end{theorem}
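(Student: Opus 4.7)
The plan is to prove the forward direction by direct calculation from the LWR formulas and the converse either by invoking Theorem~\ref{theorem:goursat} or by constructing the dressing explicitly via polar decomposition.

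For the forward direction in $\Hh^3$, since $R$ is constant in $z$, the null curve dresses as $\hat\Psi = R_{\la_1}\Psi R_{\la_0}\inv$, so
\begin{equation*}
\hat f^\Hh \;=\; \hat\Psi^*\hat\Psi \;=\; (R_{\la_0}^*)\inv\,\Psi^*\,(R_{\la_1}^* R_{\la_1})\,\Psi\,R_{\la_0}\inv.
\end{equation*}
The hypothesis $R_{\la_1}\in\SU_2$ collapses the central factor to $\I$, giving $\hat f^\Hh = V f^\Hh V^*$ with $V := (R_{\la_0}^*)\inv \in \SL_2\Cc$, a rigid motion via~\eqref{eq:isometries-H3}. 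In $\Ee^3$, differentiating $\hat\Phi = R\Phi$ in $\la$ yields $\dot{\hat\Phi}\hat\Phi\inv = \dot R R\inv + R(\dot\Phi\Phi\inv)R\inv$, so with $M := (\la_1 - \la_0)(\dot R R\inv)|_{\la_0}$,
\begin{equation*}
\hat\psi \;=\; M + R_{\la_0}\,\psi\,R_{\la_0}\inv.
\end{equation*}
Under $R_{\la_0}\in\SU_2$, conjugation by $R_{\la_0}$ commutes with $(\cdot)^*$, so $\hat f^\Ee = \hat\psi + \hat\psi^* = R_{\la_0}\,f^\Ee\,R_{\la_0}\inv + T$ with $T := M + M^*$. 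I verify $T \in \Ee^3 = i\cdot\su_2$: Hermiticity is manifest, and $\tr(\dot R R\inv) = (\log\det R)' \equiv 0$ gives trace-freeness.

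For the converse, a rigid motion preserves the Hopf differential (a scalar invariant) and moves the Gauss map by a M\"obius transformation, so its Schwarzian $\sS[G]$ is preserved, making it a Goursat transformation. Since $f$ is not flat, Theorem~\ref{theorem:goursat} provides a holomorphic dressing $R$ realizing it, and matching against the forward formulas extracts the unitarity condition on $R_{\la_0}$ or $R_{\la_1}$. Alternatively, one can bypass Goursat and construct the dressing directly. In $\Hh^3$, given $\jJ(x) = VxV^*$, polar-decompose $(V^*)\inv = W\exp Y$ with $W\in\SU_2$ and $Y\in\slfrak_2\Cc$ Hermitian and trace-free (trace-freeness follows from $\det = 1$), and set $R(\la) := W\exp\!\bigl(\tfrac{\la - \la_1}{\la_0 - \la_1}Y\bigr)$: then $R_{\la_0} = (V^*)\inv$ and $R_{\la_1} = W \in \SU_2$, and the forward direction returns $\hat f^\Hh = V f^\Hh V^*$. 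In $\Ee^3$, given $\jJ(x) = UxU\inv + T_0$, set $X := \tfrac12\, U\inv T_0 U \in \Ee^3$ and $R(\la) := U\exp\!\bigl(\tfrac{\la - \la_0}{\la_1 - \la_0}X\bigr)$; computing $(\dot R R\inv)|_{\la_0} = \tfrac{1}{\la_1 - \la_0}\,UXU\inv$ shows that the induced translation equals $T_0$ and $R_{\la_0} = U \in \SU_2$.

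The main obstacle I anticipate is the extraction step in the Goursat route: one must argue that the pointwise identity $(R_{\la_0}^*)\inv \Psi^* (R_{\la_1}^*R_{\la_1}) \Psi R_{\la_0}\inv = V\Psi^*\Psi V^*$ (and its Euclidean analogue) isolates the unitarity constraint rather than being absorbed by an algebraic cancellation against the $z$-dependence of $\Psi$. This is precisely where non-flatness enters, guaranteeing that $\Psi^*\Psi$ varies enough to force $R_{\la_1}^*R_{\la_1} = \I$. The polar-decomposition construction sidesteps this entirely and reduces the converse to a direct one-line verification via the forward formulas.
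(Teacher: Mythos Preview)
Your forward computations are the same as the paper's. For the converse, your Goursat route is also the paper's route; the paper makes the ``extraction step'' precise by comparing metrics via theorem~\ref{theorem:fundamental-forms}: equality of $ds^2$ forces $\norm{y} = \norm{R_{\la_0}y}$ (resp.\ $\norm{R_{\la_1}y}$) identically in $z$, and non-flatness guarantees $y$ sweeps out enough of $\Cc^2$ to conclude $R_{\la_0}\in\SU_2$ (resp.\ $R_{\la_1}\in\SU_2$). Your alternative via polar decomposition is genuinely different and more constructive: you write down the dressing $R$ explicitly rather than appealing to Goursat and then extracting a constraint. This buys you something real---your construction does not use non-flatness at all, so it actually proves a slightly stronger converse than stated. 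The paper's approach, on the other hand, integrates seamlessly with the surrounding theory (corollary~\ref{cor:identical-immersions} and the symmetry theorems all rest on the same Goursat-plus-metric mechanism), so the non-flatness hypothesis is uniform across that section.
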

\begin{proof}
	The null curves $\hat{\psi}$ and $\hat{\Psi}$ induced by $\hat{\Phi}$ read
	\begin{equation*}
		\hat{\psi} = R_{\la_0}\psi R_{\la_0}\inv + (\la_1-\la_0)(\dot{R}R\inv)_{\la_0}
	\end{equation*}
	and
	\begin{equation*}
		\hat{\Psi} = R_{\la_1}\Psi R_{\la_0}\inv.
	\end{equation*}
	Therefore, if $R_{\la_0}\in\SU_2$, then the induced immersion in $\Ee^3$ reads
	\begin{equation}\label{eq:rigid-motion-E3}
		\hat{f}^\Ee = R_{\la_0} f^\Ee R_{\la_0}\inv + T
	\end{equation}
	where
	\begin{equation}\label{eq:rigid-motion-E3-T}
		T = c+c^*,\quad c = (\la_1-\la_0)(\dot{R}R\inv)_{\la_0}.
	\end{equation}
	If $R_{\la_1}\in\SU_2$, then
	\begin{equation}\label{eq:rigid-motion-H3}
		\hat{f}^\Hh = (R_{\la_0}\inv)^* f^\Hh R_{\la_0}\inv.
	\end{equation}
	Therefore, in both spaces, $\hat{f}$ is a rigid motion of $f$.
	
	Conversely, let $\hat{f}$ be a rigid motion of $f$. Rigid motions are a special case of Goursat transformations, and $f$ is not flat. Therefore, by theorem~\ref{theorem:goursat}, $\hat{f}$ can be obtained by dressing. Let $\hat{\Phi} = R\Phi$ inducing $\hat{f}$. Because $\hat{f}$ is a rigid motion of $f$, the corresponding metrics are the same. Therefore, in $\Ee^3$, by theorem~\ref{theorem:fundamental-forms}, with $y=\Phi x$, 
	\begin{equation*}
		\norm{y} = \norm{R_{\la_0}y}
	\end{equation*}
	constantly on $\Sigma$.
	Using that $f$ is not flat, we deduce that $R_{\la_0}\in\SU_2$ in $\Ee^3$.
	By the same argument, $R_{\la_1}\in\SU_2$ in $\Hh^3$.
\end{proof}

\begin{corollary}[Identity]\label{cor:identical-immersions}
	Let $f$ and $\hat{f}$ be two non-flat immersions obtained via the LWR at the same evaluation points $(\la_0, \la_1)$. Then $f=\hat{f}$ if and only if the corresponding LWR frames satisfy
	\begin{equation*}
		{\Phi} = R\hat{\Phi} g
	\end{equation*}
	for some LWR gauge $g$ and some holomorphic dressing $R$ such that:
	\begin{itemize}
		\item in $\Ee^3$, $R_{\la_0}=\pm \I$ and $(\la_1-\la_0)\dot{R}_{\la_0}\in\su_2$,
		\item in $\Hh^3$, $R_{\la_0}=\pm \I$ and ${R}_{\la_1}\in\SU_2$.
	\end{itemize}
\end{corollary}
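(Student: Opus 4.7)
The plan is to recognize that $f = \hat f$ is equivalent to saying $\hat f$ is the identity rigid motion of $f$, and then specialize Theorem~\ref{theorem:rigid-motions} so that the resulting isometry is the identity.

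For the $(\Leftarrow)$ direction, first absorb the gauge $g$ via Lemma~\ref{lemma:gauging-lwr}: since gauging preserves the immersion, it suffices to show that $\Phi$ and $R\hat\Phi$ induce the same immersion. In $\Ee^3$ the hypothesis $R_{\la_0} = \pm \I$ lies in $\SU_2$, so formulas~\eqref{eq:rigid-motion-E3}--\eqref{eq:rigid-motion-E3-T} from Theorem~\ref{theorem:rigid-motions} apply, giving $f^\Ee = R_{\la_0}\hat f^\Ee R_{\la_0}\inv + T$ with $T = c + c^*$ and $c = (\la_1-\la_0)(\dot R R\inv)_{\la_0} = \pm(\la_1-\la_0)\dot R_{\la_0}$; the hypothesis $(\la_1-\la_0)\dot R_{\la_0} \in \su_2$ then gives $c^* = -c$, hence $T = 0$, so $f^\Ee = \hat f^\Ee$. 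In $\Hh^3$ the hypothesis $R_{\la_1} \in \SU_2$ activates formula~\eqref{eq:rigid-motion-H3}, giving $f^\Hh = (R_{\la_0}\inv)^*\hat f^\Hh R_{\la_0}\inv = \hat f^\Hh$ when $R_{\la_0} = \pm \I$.

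For the $(\Rightarrow)$ direction, since $f = \hat f$ is trivially a rigid motion of itself and $f$ is non-flat, Theorem~\ref{theorem:rigid-motions} yields a holomorphic dressing $R$ and a gauge $g$ with $\Phi = R\hat\Phi g$, already satisfying the $\SU_2$ condition on the appropriate $R_{\la_i}$ (if the theorem is applied in the opposite direction one inverts $R$ and $g$, using that the conditions are invariant under $R \leftrightarrow R\inv$). It remains to translate ``the induced isometry is the identity'' into constraints on $R$. In $\Ee^3$, the map $x \mapsto R_{\la_0} x R_{\la_0}\inv + T$ equals the identity iff $R_{\la_0}$ acts trivially by conjugation on $\Ee^3 = i\cdot\su_2$ and $T = 0$; the former forces $R_{\la_0} = \pm \I$, the kernel of the double cover $\SU_2 \to \SO_3$, and then $T = c + c^* = 0$ becomes $(\la_1-\la_0)\dot R_{\la_0} \in \su_2$. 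In $\Hh^3$, the isometry $x \mapsto V x V^*$ with $V = (R_{\la_0}\inv)^*$ is the identity on $\Hh^3$ iff $V = \pm \I$, equivalent to $R_{\la_0} = \pm \I$.

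No real obstacle is expected, as the corollary is essentially a bookkeeping exercise reading off the identity case of Theorem~\ref{theorem:rigid-motions}. The only care needed is in keeping track of the sign ambiguity $R_{\la_0} = \pm \I$ coming from the double cover $\SU_2 \to \SO_3$, and, in $\Ee^3$, in correctly converting the vanishing of the translation $T$ into the $\su_2$ condition on $\dot R_{\la_0}$.
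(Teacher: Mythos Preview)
Your $(\Leftarrow)$ direction is correct and matches the paper's approach.

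Your $(\Rightarrow)$ direction has a gap. You write that ``Theorem~\ref{theorem:rigid-motions} yields a holomorphic dressing $R$ and a gauge $g$ with $\Phi = R\hat\Phi g$'', but the converse part of Theorem~\ref{theorem:rigid-motions} does not produce a gauge: it only asserts that any rigid motion of $f$ can be realized by dressing the given frame $\Phi$, i.e.\ there exists $R$ such that the frame $R\Phi$ induces $\hat f$. This does \emph{not} say the particular frame $\hat\Phi$ you started with equals $R\Phi$ up to a gauge. At this point you have two frames, $R\Phi$ and $\hat\Phi$, both inducing the same immersion $\hat f$; concluding that they differ by a gauge is precisely the content of the corollary you are proving, so the argument is circular.

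The paper closes this gap with a direct argument via the Schwarz potential: since $f=\hat f$, the two immersions share Hopf differential and Schwarzian of the Gauss map, so by the uniqueness in Theorem~\ref{theorem:schwarz-potential} their potentials lie in the same gauge class, giving $\hat\xi = \xi\cdot g$. Then $\Phi g$ and $\hat\Phi$ are both frames for $\hat\xi$ and hence differ by a holomorphic left factor $R$, yielding $\hat\Phi = R\Phi g$. Only after this relation is established does the paper reuse the \emph{argument} (not just the statement) of Theorem~\ref{theorem:rigid-motions} to force $R_{\la_0}\in\SU_2$, and then non-flatness plus $\hat f=f$ pin down $R_{\la_0}=\pm\I$ and the $\su_2$ condition on $\dot R_{\la_0}$, exactly as you sketch. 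Your plan becomes correct once you insert this Schwarz-potential step at the start of the forward direction.
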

\begin{proof}
	If $\hat{f}={f}$, then the two immersions have the same Hopf differential and the Schwarzian derivative of their Gauss map are the same. Therefore, the Schwarz form of their potential is the same. Thus, their respective potential belong to the same gauge class: there exists a gauge $g$ such that
	\begin{equation*}
		\hat{\xi} = \xi\cdot g.
	\end{equation*}
	Solving the corresponding differential system yields
	\begin{equation*}
		\hat{\Phi} = R\Phi g
	\end{equation*}
	for some $\la$-holomorphic $R\colon\Cc\to\SL_2\Cc$. By the same argument as in the proof of theorem~\ref{theorem:rigid-motions}, in $\Ee^3$, $R_{\la_0}\in\SU_2$ and $\hat{f}$ is given by \eqref{eq:rigid-motion-E3}. But $\hat{f}=f$ and the immersion is not flat, so $R_{\la_0}=\pm\I$ and $c+c^*=0$ where $c := (\la_1-\la_0)(\dot{R}R\inv)_{\la_0}$, implying that $(\la_1-\la_0)\dot{R}_{\la_0}\in\su_2$. The same arguments can be applied in the case of $\Hh^3$.
	
	The converse is a consequence of lemma~\ref{lemma:gauging-lwr}, theorem~\ref{theorem:rigid-motions}, equations~\eqref{eq:rigid-motion-E3}--\eqref{eq:rigid-motion-E3-T} and \eqref{eq:rigid-motion-H3}.
\end{proof}

We end this section by exhibiting a group homomorphism between holomorphic dressings and rigid motions. This homomorphism will be useful in section~\ref{sec:catenoids} in order to distinguish rotations from general screw motions.

\begin{proposition}\label{prop:rigid-motion-homomorphism-E3}
	Fix $(\la_0,\la_1)$. Let $G_{\la_0}$ be the subgroup of $\la$-holomorphic maps $R\colon\Cc\to\SL_2\Cc$ such that $R_{\la_0}\in\SU_2$. Consider the map $\phi\colon G_{\la_0}\to \Iso^+(\Ee^3)$ where for all $X\in\Ee^3$,
	\begin{equation*}
		\phi(R)(X) := R_{\la_0}XR_{\la_0}\inv + \rho(R) + \rho(R)^*
	\end{equation*}
	with
	\begin{equation*}
		\rho(X) := (\la_1-\la_0)(\dot{R}R\inv)_{\la_0}.
	\end{equation*}
	The map $\phi$ is a group homomorphism.
\end{proposition}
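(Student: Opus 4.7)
The plan is to verify first that $\phi$ maps into $\Iso^+(\Ee^3)$, and then to check the homomorphism identity $\phi(RS)=\phi(R)\circ\phi(S)$ by establishing a cocycle relation for $\rho$.

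First I would check well-definedness. Since $R_{\la_0}\in\SU_2$, conjugation $X\mapsto R_{\la_0}XR_{\la_0}\inv$ preserves $\Ee^3=i\cdot\su_2$ and is orientation-preserving. The vector $\rho(R)=(\la_1-\la_0)(\dot{R}R\inv)_{\la_0}$ lies in $\slfrak_2\Cc$ because $R$ takes values in $\SL_2\Cc$, so $\rho(R)+\rho(R)^*$ is traceless Hermitian, i.e.\ lies in $\Ee^3$. Hence $\phi(R)$ is a rotation-plus-translation of $\Ee^3$, giving $\phi(R)\in\Iso^+(\Ee^3)$. The identity element $R\equiv\I$ gives $\rho(R)=0$ and $R_{\la_0}=\I$, so $\phi(\I)=\id$.

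Next I would compute $\rho$ on a product. Using the Leibniz rule on $(RS)^{\cdot}=\dot{R}S+R\dot{S}$ and $(RS)\inv=S\inv R\inv$, I get
\begin{equation*}
  (RS)^{\cdot}(RS)\inv = \dot{R}R\inv + R\dot{S}S\inv R\inv.
\end{equation*}
Evaluating at $\la=\la_0$ and multiplying by $\la_1-\la_0$ yields the cocycle relation
\begin{equation*}
  \rho(RS)=\rho(R)+R_{\la_0}\,\rho(S)\,R_{\la_0}\inv .
\end{equation*}

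Then I would combine this with the defining formula for $\phi$. For any $X\in\Ee^3$,
\begin{align*}
  \phi(R)\bigl(\phi(S)(X)\bigr)
  &= R_{\la_0}\bigl(S_{\la_0}XS_{\la_0}\inv+\rho(S)+\rho(S)^*\bigr)R_{\la_0}\inv+\rho(R)+\rho(R)^*\\
  &= (RS)_{\la_0}\,X\,(RS)_{\la_0}\inv + R_{\la_0}\rho(S)R_{\la_0}\inv + R_{\la_0}\rho(S)^*R_{\la_0}\inv + \rho(R)+\rho(R)^*.
\end{align*}
Because $R_{\la_0}\in\SU_2$, we have $R_{\la_0}\inv=R_{\la_0}^*$, so $R_{\la_0}\rho(S)^*R_{\la_0}\inv=(R_{\la_0}\rho(S)R_{\la_0}\inv)^*$. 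Applying the cocycle relation to combine the translation parts gives
\begin{equation*}
  \rho(R)+R_{\la_0}\rho(S)R_{\la_0}\inv+\rho(R)^*+(R_{\la_0}\rho(S)R_{\la_0}\inv)^*=\rho(RS)+\rho(RS)^*,
\end{equation*}
so $\phi(R)\circ\phi(S)=\phi(RS)$, proving the homomorphism property.

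The only slightly delicate step is the cocycle computation for $\rho$ and the compatibility of taking the Hermitian adjoint with conjugation by $R_{\la_0}$; both are routine once one remembers that $R_{\la_0}\in\SU_2$. No difficult obstacle is expected.
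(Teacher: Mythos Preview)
Your proof is correct and follows essentially the same approach as the paper: both establish the cocycle identity $\rho(RS)=\rho(R)+R_{\la_0}\rho(S)R_{\la_0}\inv$ via the Leibniz rule, then use $R_{\la_0}\in\SU_2$ to pass the adjoint through conjugation and conclude $\phi(RS)=\phi(R)\circ\phi(S)$. You additionally spell out well-definedness of $\phi$ as a map into $\Iso^+(\Ee^3)$, which the paper leaves implicit.
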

\begin{proof}
	Let $R,S\in G_{\la_0}$ and $X\in\Ee^3$.
	\begin{equation*}
		\phi(RS)(X) = (RS)_{\la_0}X(RS)_{\la_0}\inv + \rho(RS) + \rho(RS)^*.
	\end{equation*}
	But
	\begin{equation*}
		\rho(RS) = (\la_1-\la_0)(\dot{R}S + R\dot{S})_{\la_0}(RS)_{\la_0}\inv =\rho(R) + R_{\la_0}\rho(S)R_{\la_0}\inv.
	\end{equation*}
	With $R_{\la_0}\in\SU_2$,
	\begin{equation*}
		\rho(RS) + \rho(RS)^* = \rho(R) + \rho(R)^* + R_{\la_0}(\rho(S) + \rho(S)^*)R_{\la_0}\inv.
	\end{equation*}
	Therefore, $\phi(RS) = \phi(R)\phi(S)$.
\end{proof}
\begin{proposition}\label{prop:rigid-motion-homomorphism-H3}
	Fix $(\la_0,\la_1)$. Let $G_{\la_1}$ be the subgroup of $\la$-holomorphic maps $R\colon\Cc\to\SL_2\Cc$ such that $R_{\la_1}\in\SU_2$. Consider the map $\phi\colon G_{\la_1}\to \Iso^+(\Hh^3)$ where for all $X\in\Hh^3$,
	\begin{equation*}
		\phi(R)(X) := (R_{\la_0}\inv)^*XR_{\la_0}\inv.
	\end{equation*}
	The map $\phi$ is a group homomorphism.
\end{proposition}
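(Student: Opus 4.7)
The plan is to verify the homomorphism property directly, which is significantly cleaner than in the Euclidean case of Proposition~\ref{prop:rigid-motion-homomorphism-E3} because the action on $\Hh^3$ is purely by a conjugation-like operation with no additive translation term to track. I will carry this out in three short steps: (i) show $\phi$ takes values in $\Iso^+(\Hh^3)$, (ii) confirm $G_{\la_1}$ is indeed a subgroup, and (iii) verify multiplicativity.

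First, for well-definedness, I observe that for any $V \in \SL_2\Cc$ the assignment $X \mapsto VXV^*$ is an orientation-preserving isometry of $\Hh^3$ by \eqref{eq:isometries-H3}. Applied to $V := (R_{\la_0}\inv)^* = (R_{\la_0}^*)\inv \in \SL_2\Cc$, this gives exactly $\phi(R)$, so $\phi(R) \in \Iso^+(\Hh^3)$. (Notice that the condition $R_{\la_1}\in\SU_2$ plays no role in this step — it is only required so that $\phi$ is defined on the group of holomorphic dressings that realize rigid motions in the sense of Theorem~\ref{theorem:rigid-motions}, matching the setup of $\Hh^3$.) The subgroup check for $G_{\la_1}$ is immediate: $\SU_2$ is a subgroup of $\SL_2\Cc$, so evaluation at $\la_1$ shows that $G_{\la_1}$ is closed under products and inverses.

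For multiplicativity, I fix $R,S \in G_{\la_1}$ and $X \in \Hh^3$ and compute
\begin{equation*}
    \phi(RS)(X) = ((RS)_{\la_0}\inv)^*\, X\, (RS)_{\la_0}\inv.
\end{equation*}
Since evaluation at $\la_0$ is multiplicative, $(RS)_{\la_0} = R_{\la_0}S_{\la_0}$, so $(RS)_{\la_0}\inv = S_{\la_0}\inv R_{\la_0}\inv$ and $((RS)_{\la_0}\inv)^* = (R_{\la_0}\inv)^*(S_{\la_0}\inv)^*$. Substituting,
\begin{equation*}
    \phi(RS)(X) = (R_{\la_0}\inv)^*\bigl[(S_{\la_0}\inv)^*\, X\, S_{\la_0}\inv\bigr]R_{\la_0}\inv = \phi(R)\bigl(\phi(S)(X)\bigr),
\end{equation*}
which is the required identity $\phi(RS) = \phi(R)\phi(S)$.

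There is really no hard step here: the main thing to get right is the order reversal under $(\cdot)\inv$ and $(\cdot)^*$, and then the two factors separate cleanly around $X$. The absence of the translation term $\rho$ that appeared in the Euclidean case makes this proposition essentially a bookkeeping exercise once well-definedness of $\phi$ is in place.
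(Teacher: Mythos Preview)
Your proof is correct and follows essentially the same approach as the paper: the paper's proof simply notes that the claim amounts to checking $((RS)\inv)^* = (R\inv)^*(S\inv)^*$ for $R,S\in\SL_2\Cc$, which is exactly the identity you verify (with the additional, harmless, care of checking well-definedness and that $G_{\la_1}$ is a subgroup).
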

\begin{proof}
	The proof amounts to checking that
	\begin{equation*}
		((RS)\inv)^* = (R\inv)^*(S\inv)^*
	\end{equation*}
	for all $R,S\in\SL_2\Cc$.
\end{proof}


\section{Symmetries and closing}\label{sec:symmetries}

In this section, we fix the LWR data $(\Sigma, \xi, \Phi, \la_0, \la_1)$ and look for conditions on the frame $\Phi$ such that the induced immersion $f$ admits a symmetry.

\begin{definition}
	Let $f\colon\Sigma\to\Xx^3$ be a conformal immersion. A \textbf{symmetry} of $f$ is a pair $(\tau, \jJ)$ where $\tau$ is a diffeomorphism of $\Sigma$ and $\jJ$ is an ambient isometry such that
	\begin{equation*}
		\tau^* f = \jJ \circ f.
	\end{equation*}
\end{definition}

In Euclidean space, four cases can occur depending on the orientations of $\tau$ and $\jJ$. They are described in section~\ref{sec:symmetries-E3} and summarized in Table~\ref{table:1}. In hyperbolic space (section~\ref{sec:symmetries-H3}), only the diagonal entries of Table~\ref{table:1} occur (see remark~\ref{remark:orientation-reversing-H3}). We give closing conditions for $f$ in terms of the monodromy of $\Phi$ in Section~\ref{sec:closing}.

\begin{table}[h!]
	\centering
	\renewcommand{\arraystretch}{1.5}
	\begin{tabular}{ c|c|c } 
		& $\tau$ holom. & $\tau$ anti-holom. \\ 
		\hline
		$\jJ$ orient. preserving & $R\Phi g$ & $\bar{(R\Phi g)_{-\bar{\la}}}$ \\ 
		\hline
		$\jJ$ orient. reversing & $(R\Phi g)_{-\la}$ & $\bar{(R\Phi g)_{\bar{\la}}}$ \\ 
	\end{tabular}
	\caption{$\tau^*\Phi$, when $\tau^*f = \jJ\circ f$.}
	\label{table:1}
\end{table}

\subsection{Symmetries in $\Hh^3$}\label{sec:symmetries-H3}

\begin{theorem}\label{theorem:symmetries-H3}
	Assume that $f^\Hh$ is not flat. The following statements are equivalent.
	\begin{enumerate}
		\item There exist a biholomorphism $\tau$ of $\Sigma$ and an orientation-preserving isometry $\jJ$ of $\Hh^3$ such that 
		\begin{equation*}
			\tau^* f^\Hh = \jJ\circ f^\Hh.
		\end{equation*}
		\item There exist an LWR gauge $g$ and a holomorphic dressing $R$ such that $R_{\la_1}\in\SU_2$ and
		\begin{equation*}
			\tau^*\Phi = R\Phi g.
		\end{equation*}
	\end{enumerate}
\end{theorem}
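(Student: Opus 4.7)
The plan is to reduce each implication to results already established: gauge invariance (Lemma~\ref{lemma:gauging-lwr}), the characterization of rigid motions by holomorphic dressing (Theorem~\ref{theorem:rigid-motions}), and the identification criterion (Corollary~\ref{cor:identical-immersions}). A preliminary observation used in both directions is that since $\tau$ is a biholomorphism of $\Sigma$, the pullback $\tau^*\Phi$ is again an LWR frame (its logarithmic derivative $\tau^*\xi$ is an LWR potential, the condition of being affine linear in $\la$ with nilpotent linear term being preserved by holomorphic coordinate change), and the immersion it induces is precisely $\tau^*f^\Hh$.

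For the direction $(2)\Rightarrow(1)$: Suppose $\tau^*\Phi=R\Phi g$ with $R_{\la_1}\in\SU_2$. By Lemma~\ref{lemma:gauging-lwr} the LWR gauge $g$ does not change the induced immersion, so $\tau^*\Phi$ and $R\Phi$ induce the same surface. By Theorem~\ref{theorem:rigid-motions} applied in $\Hh^3$, the hypothesis $R_{\la_1}\in\SU_2$ implies that $R\Phi$ induces a rigid motion of $f^\Hh$, namely $\jJ\circ f^\Hh$ with $\jJ$ given by~\eqref{eq:rigid-motion-H3}. Since the frame $\tau^*\Phi$ induces $\tau^*f^\Hh$, we conclude $\tau^*f^\Hh=\jJ\circ f^\Hh$, and $\jJ$ is orientation-preserving.

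For the direction $(1)\Rightarrow(2)$: Assume $\tau^*f^\Hh=\jJ\circ f^\Hh$. The right-hand side is a rigid motion of $f^\Hh$; since $f^\Hh$ is not flat, Theorem~\ref{theorem:rigid-motions} provides a holomorphic dressing $S$ of $\Phi$ with $S_{\la_1}\in\SU_2$ such that $S\Phi$ induces $\jJ\circ f^\Hh$. The two frames $\tau^*\Phi$ and $S\Phi$ therefore induce the same non-flat immersion, and Corollary~\ref{cor:identical-immersions} yields a holomorphic dressing $R'$ with $R'_{\la_0}=\pm\I$ and $R'_{\la_1}\in\SU_2$, together with an LWR gauge $g$, such that $\tau^*\Phi=R'S\Phi g$. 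Setting $R:=R'S$ gives the desired decomposition, with $R_{\la_1}=R'_{\la_1}S_{\la_1}\in\SU_2$ as a product of two elements of $\SU_2$.

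The substantive work sits entirely in the earlier theorems: the only real care needed here is bookkeeping the $\SU_2$ condition at $\la_1$ through the composition of dressings, and remembering that non-flatness is essential precisely because Corollary~\ref{cor:identical-immersions} requires it. No step requires a fresh calculation.
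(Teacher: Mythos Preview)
Your proof is correct and follows essentially the same approach as the paper's: both directions are reduced to Lemma~\ref{lemma:gauging-lwr}, Theorem~\ref{theorem:rigid-motions}, and Corollary~\ref{cor:identical-immersions}, with the same composition $R=R'S$ of dressings in the $(1)\Rightarrow(2)$ direction. Your preliminary remark that $\tau^*\Phi$ is again an LWR frame inducing $\tau^*f^\Hh$ is a helpful addition that the paper leaves implicit.
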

\begin{proof}
	Assume that $(1)$ holds. Then $\tau^*f^\Hh$ is a rigid motion of $f^\Hh$, so by theorem~\ref{theorem:rigid-motions}, there exists a holomorphic dressing $\widetilde{R}$ such that $\widetilde{R}_{\la_1}\in\SU_2$ and 
	\begin{equation*}
		\tau^* f^\Hh = \LWR(\Sigma, \xi, \widetilde{R}\Phi, \la_0, \la_1).
	\end{equation*}
	On the other hand,
	\begin{equation*}
		\tau^* f^\Hh = \LWR(\Sigma, \tau^*\xi, \tau^*\Phi, \la_0, \la_1).
	\end{equation*}
	By corollary~\ref{cor:identical-immersions}, there exists an LWR gauge $g$ and a holomorphic dressing $\widehat{R}$ such that $\widehat{R}_{\la_1}\in\SU_2$ and 
	\begin{equation*}
		\tau^*\Phi = \widehat{R}\widetilde{R}\Phi g.
	\end{equation*}
	Conclude with $R := \widehat{R}\widetilde{R}$.
	
	Conversely, assume that $(2)$ holds. Then the LWR potential associated with $\tau^*\Phi$ is $\xi\cdot g$. Therefore,
	\begin{align*}
		\tau^*f^\Hh &= \LWR(\Sigma, \tau^*\xi, \tau^*\Phi, \la_0, \la_1)\\
		&=  \LWR(\Sigma, \xi\cdot g, R\Phi g, \la_0, \la_1)\\
		&= \LWR(\Sigma, \xi, R\Phi, \la_0, \la_1),
	\end{align*}
	and by theorem~\ref{theorem:rigid-motions}, there exists an orientation-preserving isometry $\jJ$ of $\Hh^3$ such that $\tau^*f^\Hh = \jJ\circ f^\Hh$.
\end{proof}

A similar statement holds with an anti-holomorphic $\tau$:
\begin{theorem}\label{theorem:symmetries-H3-reversing}
	Assume that $f^\Hh$ is a not flat and obtained via the LWR at $(\la_0,\la_1)\in\Rr^2$.
	The following statements are equivalent.
	\begin{enumerate}
		\item There exists an anti-holomorphic diffeomorphism $\tau$ of $\Sigma$ and an orientation-reversing isometry $\jJ$ of $\Hh^3$ such that 
		\begin{equation*}
			\tau^* f^\Hh = \jJ\circ f^\Hh.
		\end{equation*}
		\item There exist an LWR gauge $g$ and a holomorphic dressing $R$ such that $R_{\la_1}\in\SU_2$ and
		\begin{equation*}
			\bar{\tau^*\Phi_{\bar{\la}}} = R_\la\Phi_\la g.
		\end{equation*}
	\end{enumerate}
\end{theorem}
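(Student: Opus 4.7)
The plan is to parallel the proof of theorem~\ref{theorem:symmetries-H3} by introducing an auxiliary LWR frame
\begin{equation*}
	\widetilde\Phi_\la(z) := \bar{\Phi_{\bar\la}(\tau(z))},
\end{equation*}
which converts the anti-holomorphic/orientation-reversing situation into a holomorphic/orientation-preserving one. Since $\tau$ is anti-holomorphic in $z$ and $\la\mapsto\bar\la$ is anti-holomorphic in $\la$, combining these two anti-holomorphic operations with outer complex conjugation makes $\widetilde\Phi$ holomorphic in both $z$ and $\la$. A direct calculation then shows that $\widetilde\Phi$ is an LWR frame for the potential $\widetilde\xi_\la := \bar{\tau^*\xi_{\bar\la}}$, which is affine linear in $\la$ with nilpotent leading coefficient.

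Using the hypothesis $(\la_0,\la_1)\in\Rr^2$, the identity $\Phi_{\la_j}\circ\tau = \bar{\widetilde\Phi_{\la_j}}$ for $j=0,1$ yields $\Psi\circ\tau = \bar{\widetilde\Psi}$ where $\widetilde\Psi := \widetilde\Phi_{\la_1}\widetilde\Phi_{\la_0}\inv$, and therefore
\begin{equation*}
	\tau^*f^\Hh = (\bar{\widetilde\Psi})^*\bar{\widetilde\Psi} = \widetilde\Psi^T\bar{\widetilde\Psi} = (\widetilde\Psi^*\widetilde\Psi)^T = (\widetilde f^\Hh)^T,
\end{equation*}
where $\widetilde f^\Hh := \LWR(\Sigma,\widetilde\xi,\widetilde\Phi,\la_0,\la_1)$, and the transpose $x\mapsto x^T$ is an orientation-reversing isometry of $\Hh^3$ in the matrix model of section~\ref{sec:ambient-spaces} (it corresponds to reflection in one spatial coordinate of the Minkowski model). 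Consequently, an orientation-reversing isometry $\jJ$ with $\tau^*f^\Hh = \jJ\circ f^\Hh$ exists if and only if $\widetilde f^\Hh$ is related to $f^\Hh$ by an orientation-preserving isometry.

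For $(2)\Rightarrow(1)$, given $\widetilde\Phi = R\Phi g$ with $R_{\la_1}\in\SU_2$, theorem~\ref{theorem:rigid-motions} implies that $\widetilde f^\Hh$ is an orientation-preserving rigid motion of $f^\Hh$, and then $\tau^*f^\Hh = (\widetilde f^\Hh)^T$ is an orientation-reversing rigid motion of $f^\Hh$ as required. Conversely, for $(1)\Rightarrow(2)$, the existence of an orientation-preserving $\jJ'$ with $\widetilde f^\Hh = \jJ'\circ f^\Hh$ produces, via theorem~\ref{theorem:rigid-motions}, a dressing $\widetilde R$ with $\widetilde R_{\la_1}\in\SU_2$ such that $\widetilde f^\Hh$ is also induced by $\widetilde R\Phi$; since $\widetilde f^\Hh$ is likewise induced by $\widetilde\Phi$, corollary~\ref{cor:identical-immersions} (applied with the two frames $\widetilde R\Phi$ and $\widetilde\Phi$) yields an LWR gauge $g$ and a further dressing $\widehat R$ with $\widehat R_{\la_1}\in\SU_2$ such that $\widetilde\Phi = \widehat R\widetilde R\Phi g$, so $R := \widehat R\widetilde R$ satisfies the claim.

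The main delicate point is the consistent bookkeeping of holomorphic versus anti-holomorphic dependence in both $z$ and $\la$, together with the identification of the transpose as an orientation-reversing isometry in the chosen matrix model; once these are in place, the argument reduces cleanly to the orientation-preserving case already settled in theorem~\ref{theorem:symmetries-H3}, and no new geometric content beyond that theorem and theorem~\ref{theorem:rigid-motions} is needed.
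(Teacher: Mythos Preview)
Your proof is correct and follows essentially the same approach as the paper: you introduce the auxiliary frame $\widetilde\Phi_\la = \bar{\tau^*\Phi_{\bar\la}}$, verify it is an LWR frame, compute that its induced immersion differs from $\tau^*f^\Hh$ by an orientation-reversing isometry, and then reduce to the orientation-preserving case via theorem~\ref{theorem:rigid-motions} and corollary~\ref{cor:identical-immersions}. The only cosmetic difference is that you write the orientation-reversing isometry as the transpose $x\mapsto x^T$, whereas the paper uses complex conjugation $x\mapsto\bar x$; these coincide on Hermitian matrices, so the arguments are the same.
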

\begin{proof}
	Assume that $(1)$ holds. Then there exists an orientation-preserving isometry $\lL$ such that
	\begin{equation*}
		\bar{\tau^*f^\Hh} = \lL\circ f^\Hh.
	\end{equation*}
	By theorem~\ref{theorem:rigid-motions}, there exists a holomorphic dressing $\widetilde{R}$ such that $\widetilde{R}_{\la_1}\in\SU_2$ and 
	\begin{equation*}
		\bar{\tau^*f^\Hh} = \LWR(\Sigma, \xi, \widetilde{R}\Phi, \la_0, \la_1).
	\end{equation*}
	On the other hand, defining $\widehat{\xi}_\la = \bar{\tau^*\xi_\bar{\la}}$ and $\widehat{\Phi}_\la = \bar{\tau^*\Phi_\bar{\la}}$, we have that $\widehat{\xi}$ and $\widehat{\Phi}$ are respectively LWR potential and frame, and going through the steps of LWR gives
	\begin{equation*}
		\widehat{\Psi} = \widehat{\Phi}_{\la_1}\widehat{\Phi}_{\la_0}\inv = \tau^*\bar{\Psi}
	\end{equation*}
	because $\la_0, \la_1\in\Rr$. Moreover,
	\begin{equation*}
		\widehat{f} = \widehat{\Psi}\widehat{\Psi}^* = \bar{\tau^*f^\Hh}.
	\end{equation*}
	Therefore,
	\begin{equation*}
		\bar{\tau^*f^\Hh} = \LWR(\Sigma, \widehat{\xi}, \widehat{\Phi}, \la_0, \la_1).
	\end{equation*} 
	We deduce that $\bar{\tau^*f^\Hh}$ is induced by both $\widetilde{R}\Phi$ and $\bar{\tau^*\Phi_\bar{\la}}$. By corollary~\ref{cor:identical-immersions}, there exist an LWR gauge $g$ and a holomorphic dressing $\widehat{R}$ such that $\widehat{R}_{\la_1}\in\SU_2$ and 
	\begin{equation*}
		\bar{\tau^*\Phi_\bar{\la}} = \widehat{R}\widetilde{R} \Phi g.
	\end{equation*}
	Conclude with $R:=\widehat{R}\widetilde{R}$.
	
	The converse is proved with the same arguments as in the proof of theorem~\ref{theorem:symmetries-H3}.
\end{proof}

\subsection{Symmetries in $\Ee^3$}\label{sec:symmetries-E3}

Each of theorems~\ref{theorem:symmetries-E3} to~\ref{theorem:symmetries-E3-minus-bar} correspond to an entry of Table~\ref{table:1}.

\begin{theorem}\label{theorem:symmetries-E3}
	Assume that $f^\Ee$ is not flat.  The following statements are equivalent.
	\begin{enumerate}
		\item There exist a biholomorphism $\tau$ of $\Sigma$ and an orientation-preserving isometry $\jJ$ of $\Ee^3$ such that 
		\begin{equation*}
			\tau^* f^\Ee = \jJ\circ f^\Ee.
		\end{equation*}
		\item There exist an LWR gauge $g$ and a holomorphic dressing $R$ such that $R_{\la_0}\in\SU_2$ and
		\begin{equation*}
			\tau^*\Phi = R\Phi g.
		\end{equation*}
	\end{enumerate}
\end{theorem}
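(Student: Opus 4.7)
The plan is to mirror the proof of theorem~\ref{theorem:symmetries-H3} almost verbatim, swapping the hyperbolic rigid-motion condition $R_{\la_1}\in\SU_2$ for the Euclidean condition $R_{\la_0}\in\SU_2$ that comes out of theorem~\ref{theorem:rigid-motions}. Since $f^\Ee$ is assumed non-flat, both corollary~\ref{cor:identical-immersions} and the dressing characterization of rigid motions are available.

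For $(1)\Rightarrow(2)$, I would first note that $\tau^*f^\Ee = \jJ\circ f^\Ee$ is a rigid motion of $f^\Ee$. By theorem~\ref{theorem:rigid-motions}, there exists a holomorphic dressing $\widetilde{R}$ with $\widetilde{R}_{\la_0}\in\SU_2$ such that
\begin{equation*}
\tau^*f^\Ee = \LWR(\Sigma,\xi,\widetilde{R}\Phi,\la_0,\la_1).
\end{equation*}
On the other hand, pulling back the LWR data through $\tau$ gives
\begin{equation*}
\tau^*f^\Ee = \LWR(\Sigma,\tau^*\xi,\tau^*\Phi,\la_0,\la_1).
\end{equation*}
Since these are two LWR realizations of the same non-flat immersion at the same evaluation points, corollary~\ref{cor:identical-immersions} in the Euclidean case produces an LWR gauge $g$ and a holomorphic dressing $\widehat{R}$ with $\widehat{R}_{\la_0}=\pm\I$ (and $(\la_1-\la_0)\dot{\widehat{R}}_{\la_0}\in\su_2$, which will not be needed) satisfying $\tau^*\Phi = \widehat{R}\widetilde{R}\Phi g$. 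Setting $R:=\widehat{R}\widetilde{R}$ then yields $R_{\la_0}=\pm\widetilde{R}_{\la_0}\in\SU_2$, which is exactly $(2)$.

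For $(2)\Rightarrow(1)$, assume $\tau^*\Phi = R\Phi g$ with $R_{\la_0}\in\SU_2$. The LWR potential of $\tau^*\Phi$ is then $\xi\cdot g$, so by lemma~\ref{lemma:gauging-lwr},
\begin{align*}
\tau^*f^\Ee &= \LWR(\Sigma,\tau^*\xi,\tau^*\Phi,\la_0,\la_1)\\
&= \LWR(\Sigma,\xi\cdot g,R\Phi g,\la_0,\la_1)\\
&= \LWR(\Sigma,\xi,R\Phi,\la_0,\la_1).
\end{align*}
Since $R_{\la_0}\in\SU_2$, theorem~\ref{theorem:rigid-motions} provides an orientation-preserving isometry $\jJ$ of $\Ee^3$ with $\tau^*f^\Ee = \jJ\circ f^\Ee$, giving $(1)$.

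There is no real obstacle here: the argument is a direct transcription of the hyperbolic case. The only point worth care is bookkeeping the ``harmless'' part of the dressing produced by corollary~\ref{cor:identical-immersions}, namely that $\widehat{R}_{\la_0}=\pm\I$ is enough to keep $(\widehat{R}\widetilde{R})_{\la_0}=\pm\widetilde{R}_{\la_0}$ inside $\SU_2$; the extra $\su_2$-condition on $\dot{\widehat{R}}_{\la_0}$ is not used since we only need membership in $\SU_2$ at $\la_0$, not triviality of the induced translation.
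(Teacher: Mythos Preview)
Your proof is correct and follows exactly the approach the paper intends: the paper's own proof of this theorem consists of the single sentence ``Follow step by step the proof of theorem~\ref{theorem:symmetries-H3} to prove this theorem,'' and you have carried out precisely that transcription with the appropriate Euclidean substitutions.
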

\begin{proof}
	Follow step by step the proof of theorem~\ref{theorem:symmetries-H3} to prove this theorem.
\end{proof}

\begin{theorem}\label{theorem:symmetries-E3-bar}
	Assume that $f^\Ee$ is not flat and obtained via the LWR at $(\la_0,\la_1)\in\Rr^2$.
	The following statements are equivalent.
	\begin{enumerate}
		\item There exist an anti-holomorphic diffeomorphism $\tau$ of $\Sigma$ and an orientation-reversing isometry $\jJ$ of $\Ee^3$ such that 
		\begin{equation*}
			\tau^* f^\Ee = \jJ\circ f^\Ee.
		\end{equation*}
		\item There exist an LWR gauge $g$ and a holomorphic dressing $R$ such that $R_{\la_0}\in\SU_2$ and
		\begin{equation*}
			\bar{\tau^*\Phi_{\bar{\la}}} = R_\la\Phi_\la g.
		\end{equation*}
	\end{enumerate}
\end{theorem}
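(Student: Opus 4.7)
The plan is to mirror, almost line for line, the proof of Theorem~\ref{theorem:symmetries-H3-reversing}, swapping the hyperbolic ingredients for their Euclidean counterparts given in Theorems~\ref{theorem:symmetries-H3} and~\ref{theorem:symmetries-E3}. The direction $(1)\Rightarrow(2)$ is the substantive part; the converse will follow by running the construction in reverse.

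To set up $(1)\Rightarrow(2)$, I would first observe that $\bar{\cdot}$ (entrywise complex conjugation) is an orientation-reversing isometry of $\Ee^3=i\cdot\su_2$, so composing with the orientation-reversing $\jJ$ yields an orientation-preserving isometry $\lL$ of $\Ee^3$ with $\bar{\tau^*f^\Ee} = \lL\circ f^\Ee$. By Theorem~\ref{theorem:rigid-motions}, this rigid motion of $f^\Ee$ is realized by a holomorphic dressing: there exists $\widetilde{R}$ with $\widetilde{R}_{\la_0}\in\SU_2$ such that
\begin{equation*}
    \bar{\tau^*f^\Ee} = \LWR(\Sigma,\xi,\widetilde{R}\Phi,\la_0,\la_1).
\end{equation*}

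Next I would define $\widehat{\xi}_\la := \bar{\tau^*\xi_{\bar\la}}$ and $\widehat{\Phi}_\la := \bar{\tau^*\Phi_{\bar\la}}$ and check directly from the definition that they form an LWR potential and a corresponding LWR frame (the combined operation is holomorphic in $\la$ and preserves the $\SL_2\Cc$-valuedness and linearity in $\la$). The key computation is to identify the Euclidean null curve produced by $\widehat{\Phi}$ at the evaluation points $(\la_0,\la_1)$. Expanding $\Phi_\la = \sum_k c_k(z)\la^k$ in a Taylor series and using $\la_0,\la_1\in\Rr$, one gets $(\partial_\la\bar{\Phi_{\bar\la}})_{\la_0} = \bar{\dot{\Phi}_{\la_0}}$, hence
\begin{equation*}
    \widehat{\psi} = (\la_1-\la_0)(\dot{\widehat{\Phi}}\widehat{\Phi}^{-1})_{\la_0} = \bar{\tau^*\psi},
\end{equation*}
so that $\widehat{f}^\Ee = \widehat{\psi}+\widehat{\psi}^* = \bar{\tau^*f^\Ee}$. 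Thus $\bar{\tau^*f^\Ee}$ is induced both by $\widetilde{R}\Phi$ and by $\widehat{\Phi}$. Since the immersion is not flat, Corollary~\ref{cor:identical-immersions} applies: there exist an LWR gauge $g$ and a holomorphic dressing $\widehat{R}$ with $\widehat{R}_{\la_0}=\pm\I$ and $(\la_1-\la_0)\dot{\widehat{R}}_{\la_0}\in\su_2$ such that $\widehat{\Phi} = \widehat{R}\,\widetilde{R}\,\Phi g$. Setting $R := \widehat{R}\widetilde{R}$ gives $R_{\la_0}\in\SU_2$ and the desired identity $\bar{\tau^*\Phi_{\bar\la}} = R_\la\Phi_\la g$.

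The main obstacle, as in the hyperbolic anti-holomorphic theorem, is the compatibility check $\widehat{\psi}=\bar{\tau^*\psi}$: it is precisely the reality hypothesis $(\la_0,\la_1)\in\Rr^2$ that lets the $\la$-derivative commute with the map $\Phi_\la\mapsto\bar{\Phi_{\bar\la}}$ at the evaluation points, and without it the formula for $\psi$ (which involves a $\la$-derivative rather than just an evaluation, unlike the hyperbolic $\Psi$) would pick up an extra factor. The converse $(2)\Rightarrow(1)$ will be short: given the frame identity, $\widehat{\xi}=\xi\cdot g$, so $\bar{\tau^*f^\Ee}=\LWR(\Sigma,\xi,R\Phi,\la_0,\la_1)$, which by Theorem~\ref{theorem:rigid-motions} is a rigid motion of $f^\Ee$ (orientation-preserving, thanks to $R_{\la_0}\in\SU_2$); composing once more with $\bar{\cdot}$ produces the required orientation-reversing symmetry $\jJ$.
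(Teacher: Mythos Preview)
Your proposal is correct and follows precisely the strategy the paper prescribes (``follow step by step the proof of theorem~\ref{theorem:symmetries-H3-reversing}''), with the appropriate Euclidean substitutions: $\lL$ orientation-preserving on $\Ee^3$, the condition $R_{\la_0}\in\SU_2$ in place of $R_{\la_1}\in\SU_2$, and the null-curve identity $\widehat{\psi}=\bar{\tau^*\psi}$ replacing $\widehat{\Psi}=\tau^*\bar{\Psi}$. If anything, your write-up is more explicit than the paper's, since you spell out why the reality of $(\la_0,\la_1)$ is needed to commute the $\la$-derivative with $\Phi_\la\mapsto\bar{\Phi_{\bar\la}}$.
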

\begin{proof}
	Follow step by step the proof of theorem~\ref{theorem:symmetries-H3-reversing} to prove this theorem.
\end{proof}

We get two more types of symmetries in the Euclidean case.

\begin{theorem}\label{theorem:symmetries-E3-minus}
	Assume that $f^\Ee$ is not flat and obtained via the LWR with $\la_0=0$.
	The following statements are equivalent.
	\begin{enumerate}
		\item There exist a biholomorphism $\tau$ of $\Sigma$ and an orientation-reversing isometry $\jJ$ of $\Ee^3$ such that 
		\begin{equation*}
			\tau^* f^\Ee = \jJ\circ f^\Ee.
		\end{equation*}
		\item There exist an LWR gauge $g$ and a holomorphic dressing $R$ such that $R_{0}\in\SU_2$ and
		\begin{equation*}
			\tau^*\Phi_{-\la} = R_\la\Phi_\la g.
		\end{equation*}
	\end{enumerate}
\end{theorem}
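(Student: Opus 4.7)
My plan is to introduce the auxiliary frame $\Phi^-_\la := \tau^*\Phi_{-\la}$ and analyze the immersion it induces at the same evaluation points $(0,\la_1)$. Because $\tau$ is biholomorphic and the linear term $A$ in $\xi_\la = (A\la + B)dz$ is nilpotent, $\Phi^-$ is itself an LWR frame, with potential $\tau^*((-A\la + B)dz)$. The hypothesis $\la_0 = 0$ is crucial: it makes the involution $\la \mapsto -\la$ fix the Euclidean evaluation point, and a short chain-rule computation gives
\begin{equation*}
    (\dot\Phi^-)_0 = -\tau^*\dot\Phi_0, \qquad \Phi^-_0 = \tau^*\Phi_0,
\end{equation*}
so that the null curve of $\Phi^-$ is $\psi^- = -\tau^*\psi$ and the immersion it induces at $(0,\la_1)$ is $f^- = -\tau^* f^\Ee$.

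For the implication $(2)\Rightarrow(1)$, I will substitute $\Phi^- = R\Phi g$. The gauge $g$ leaves the immersion invariant by lemma~\ref{lemma:gauging-lwr}, and the holomorphic dressing $R$ with $R_0 \in \SU_2$ acts by the rigid-motion formula of theorem~\ref{theorem:rigid-motions}, yielding
\begin{equation*}
	f^- = R_0 f^\Ee R_0\inv + T, \qquad T = \la_1 \dot R_0 R_0\inv + \bar{\la_1}(\dot R_0 R_0\inv)^*.
\end{equation*}
Since $\det R = 1$ implies $\tr(\dot R R\inv) = 0$, one has $T \in \Ee^3$, and equating with $f^- = -\tau^* f^\Ee$ yields $\tau^* f^\Ee = -R_0 f^\Ee R_0\inv - T$, which is the general form of an orientation-reversing isometry of $\Ee^3$.

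For the converse, I will write an orientation-reversing $\jJ$ as $\jJ(X) = -UXU^* + T_0$ with $U \in \SU_2$ and $T_0 \in \Ee^3$. Then $f^- = -\tau^* f^\Ee = -\jJ \circ f^\Ee = U f^\Ee U^* - T_0$ is an orientation-\emph{preserving} rigid motion of $f^\Ee$, so theorem~\ref{theorem:rigid-motions} provides a holomorphic dressing $\widetilde R$ with $\widetilde R_0 \in \SU_2$ such that $\widetilde R \Phi$ induces $f^-$. Both $\Phi^-$ and $\widetilde R \Phi$ then induce the same non-flat immersion $f^-$, so by corollary~\ref{cor:identical-immersions} there exist an LWR gauge $g$ and a holomorphic dressing $R'$ with $R'_0 = \pm \I$ satisfying $\Phi^- = R' \widetilde R \Phi g$. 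Setting $R := R' \widetilde R$ gives $R_0 = \pm \widetilde R_0 \in \SU_2$ and the identity $\tau^* \Phi_{-\la} = R_\la \Phi_\la g$.

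The main obstacle, as in the preceding symmetry theorems, will be tracking why $\la_0 = 0$ is forced: only this choice makes the involution $\la \mapsto -\la$ preserve the evaluation points, which is what allows $\Phi^-$ to be an LWR frame \emph{for the same $(0,\la_1)$} and thus be compared with $\widetilde R\Phi$ via corollary~\ref{cor:identical-immersions}. Everything else is a careful adaptation of the rigid-motion bookkeeping already established in section~\ref{sec:related-surfaces}.
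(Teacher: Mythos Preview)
Your proposal is correct and follows essentially the same approach as the paper's proof. The paper's argument is a terse sketch that says to mimic the proof of theorem~\ref{theorem:symmetries-H3-reversing}, observing that $X\mapsto -X$ is orientation reversing in $\Ee^3$ and that the frame $\widehat{\Phi}_\la := \Phi_{-\la}$ induces the null curve $-\psi$ when $\la_0=0$; you have unpacked exactly this, including the reduction via $X\mapsto -X$ to an orientation-preserving rigid motion followed by the appeal to theorem~\ref{theorem:rigid-motions} and corollary~\ref{cor:identical-immersions}.
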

\begin{proof}
	Again, the ideas are the same as in theorem~\ref{theorem:symmetries-H3-reversing}, noting that the isometry $X\mapsto -X$ in $\Ee^3$ is orientation reversing, and that with $\widehat{\Phi}_\la = \Phi_{-\la}$ and $\la_0 = 0$, the induced null curve reads
	\begin{align*}
		\widehat{\psi} &= (\la_1-\la_0)\dot{\widehat{\Phi}}_{\la_0}\widehat{\Phi}_{\la_0}\inv \\
		&= (\la_0-\la_1) \dot{\Phi}_{\la_0}\Phi_{\la_0} \inv\\
		&=-\psi.\qedhere
	\end{align*}
\end{proof}

\begin{theorem}\label{theorem:symmetries-E3-minus-bar}
	Assume that $f^\Ee$ is not flat and obtained via the LWR at $(\la_0, \la_1)$ with $\la_0=0$ and $\la_1\in\Rr$.
	The following statements are equivalent.
	\begin{enumerate}
		\item There exist an anti-holomorphic diffomorphism $\tau$ of $\Sigma$ and an orientation preserving isometry $\jJ$ of $\Ee^3$ such that 
		\begin{equation*}
			\tau^*f^\Ee = \jJ\circ f^\Ee.
		\end{equation*}
		\item There exist an LWR gauge $g$ and a holomorphic dressing $R$ such that $R_{0}\in\SU_2$ and
		\begin{equation*}
			\bar{\tau^*\Phi_{-\bar{\la}}} = R_\la\Phi_\la g.
		\end{equation*}
	\end{enumerate}
\end{theorem}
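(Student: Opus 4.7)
The plan is to combine the two frame manipulations used in the proofs of Theorems~\ref{theorem:symmetries-E3-bar} and~\ref{theorem:symmetries-E3-minus}. Set
\[
\hat\Phi_\la := \overline{\tau^*\Phi_{-\bar\la}}.
\]
Since $\tau$ is anti-holomorphic and matrix complex conjugation is anti-linear, $\hat\Phi$ is holomorphic in $z$ and in $\la$, and its potential is $\overline{\tau^*\xi_{-\bar\la}}$, still affine in $\la$ with nilpotent linear term. Hence $\hat\Phi$ is again an LWR frame.

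The essential computation is that at the evaluation point $\la_0=0$, expanding $\Phi(\la)=\sum A_n\la^n$ gives $(\dot{\hat\Phi}\hat\Phi\inv)_0 = -\overline{\tau^*(\dot\Phi\Phi\inv)_0}$, and with $\la_1\in\Rr$ this yields the induced null curve
\[
\hat\psi = -\overline{\tau^*\psi}.
\]
Using $A^* = \bar A^T$ on $\mathrm{sl}_2\Cc$, one then obtains $\hat f^\Ee = -(\tau^* f^\Ee)^T$. The final preparatory observation is that $\iI\colon X\mapsto -X^T$ is an orientation-\emph{preserving} isometry of $\Ee^3 = i\cdot\su_2$: it clearly preserves $-\det$, and a short computation with~\eqref{eq:cross-product-E3} gives $\iI(x)\times\iI(y) = \iI(x\times y)$.

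With these three pieces in hand, the two implications follow the now-standard template of the preceding symmetry theorems. For $(1)\Rightarrow(2)$, the relation $\hat f^\Ee = (\iI\circ\jJ)\circ f^\Ee$ exhibits $\hat f^\Ee$ as an orientation-preserving rigid motion of $f^\Ee$; Theorem~\ref{theorem:rigid-motions} then furnishes a holomorphic dressing $\tilde R$ with $\tilde R_0\in\SU_2$ inducing $\hat f^\Ee$, and Corollary~\ref{cor:identical-immersions}, applicable since $f^\Ee$ is not flat, yields an LWR gauge $g$ and a further dressing $\hat R$ with $\hat R_0 = \pm\I$ such that $\hat\Phi = \hat R\tilde R\Phi g$; one sets $R := \hat R\tilde R$. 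For $(2)\Rightarrow(1)$, Theorem~\ref{theorem:rigid-motions} applied to $\hat\Phi = R\Phi g$ gives $\hat f^\Ee = \jJ_1\circ f^\Ee$ for an orientation-preserving $\jJ_1$, and combining with $\hat f^\Ee = -(\tau^* f^\Ee)^T$ yields $\tau^* f^\Ee = (\iI\circ \jJ_1)\circ f^\Ee$, which is orientation-preserving as a composition of two such.

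The main difficulty is not conceptual but one of careful bookkeeping: tracking how matrix complex conjugation, the pullback by the anti-holomorphic $\tau$, and the substitution $\la\mapsto-\bar\la$ interact with the ODE $\Phi\inv d\Phi = \xi$ and with each other. The hypotheses $\la_0 = 0$ and $\la_1\in\Rr$ are used in precisely two places: to ensure that $\hat\Phi$ induces an immersion at the same evaluation points $(0,\la_1)$, and to force $\hat\psi$ to be exactly $-\overline{\tau^*\psi}$ (rather than a scaled variant), so that the two orientation-reversing effects of $X\mapsto\bar X$ and $\la\mapsto -\la$ cancel and leave the orientation-preserving ambient isometry required by~(1).
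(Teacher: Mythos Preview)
Your proof is correct and follows essentially the same approach as the paper, which simply says ``Combine theorems~\ref{theorem:symmetries-E3-bar} and~\ref{theorem:symmetries-E3-minus}.'' You spell out explicitly the combined frame $\hat\Phi_\la = \overline{\tau^*\Phi_{-\bar\la}}$, the resulting null curve $\hat\psi = -\overline{\tau^*\psi}$, and the verification that $X\mapsto -X^T$ is an orientation-preserving isometry of $\Ee^3$, which is exactly the content the paper leaves implicit in that one-line reference.
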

\begin{proof}
	Combine theorems~\ref{theorem:symmetries-E3-bar} and~\ref{theorem:symmetries-E3-minus}.
\end{proof}

\begin{remark}\label{remark:orientation-reversing-H3}
	Theorems~\ref{theorem:symmetries-E3-minus} and~\ref{theorem:symmetries-E3-minus-bar} do not have an equivalent in $\Hh^3$ because if $\tau$ is anti-holomorphic and $\jJ$ is orientation preserving (or if $\tau$ is holomorphic and $\jJ$ is orientation reversing), then the mean curvature of $\tau^*f$ and the mean curvature of $\jJ\circ f$ have opposite sign. Therefore, only one of them can be obtained via the LWR.
\end{remark}

\subsection{Closing conditions}\label{sec:closing}

Let $\pi_1(\Sigma)$ denote the first fundamental group of $\Sigma$ based at some point $z_0\in\Sigma$.

\begin{definition}
	For any loop $\gamma\in\pi_1(\Sigma)$, there exists a holomorphic dressing $M(\gamma)$ such that
	\begin{equation*}
		\tau^*\Phi =M(\gamma)  \Phi
	\end{equation*}
	where $\tau$ is the deck transformation associated with $\gamma$.
	The $\la$-holomorphic matrix $M(\gamma)$ is the \textbf{monodromy} of $\Phi$ along $\gamma$.
\end{definition}

\begin{theorem}\label{theorem:closing-conditions}
	Let $M$ be the monodromy of $\Phi$.
	\begin{itemize}
		\item $f^\Ee$ descends to a well-defined immersion of $\Sigma$ if and only if for all $\gamma\in\pi_1(\Sigma)$, 
		\begin{equation*}
			M_{\la_0}(\gamma)\in\{\pm \I\}\quad \text{and}\quad (\la_1-\la_0)\dot{M}_{\la_0}(\gamma)\in\su_2.
		\end{equation*}
		\item $f^\Hh$ descends to a well-defined immersion of $\Sigma$ if and only if for all $\gamma\in\pi_1(\Sigma)$, 
		\begin{equation*}
			M_{\la_0}(\gamma)\in\{\pm \I\}\quad \text{and}\quad M_{\la_1}(\gamma)\in\SU_2.
		\end{equation*}
	\end{itemize}
\end{theorem}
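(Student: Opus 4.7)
The plan is to recognize that $f^\Xx$ descends to $\Sigma$ exactly when $\tau^*f^\Xx=f^\Xx$ for every deck transformation $\tau$ of $\widetilde{\Sigma}\to\Sigma$, and then to apply the rigid-motion analysis of Theorem~\ref{theorem:rigid-motions} to the constant dressing $R=M(\gamma)$ singled out by the identity $\tau^*\Phi=M(\gamma)\Phi$. In this setting the pulled-back frame is obtained from $\Phi$ by the holomorphic dressing $M(\gamma)$ with trivial LWR gauge, so descent is equivalent to asking that the rigid motion induced by $M(\gamma)$ via \eqref{eq:rigid-motion-E3}--\eqref{eq:rigid-motion-H3} be the identity on $\Xx^3$.

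For sufficiency I would substitute the hypotheses directly into those rigid-motion formulas. In $\Ee^3$, when $M_{\la_0}(\gamma)=\pm\I$ the conjugation $X\mapsto M_{\la_0}XM_{\la_0}\inv$ acts trivially on $\Ee^3$, and the translation $c+c^*$ of \eqref{eq:rigid-motion-E3-T} vanishes precisely when $(\la_1-\la_0)\dot{M}_{\la_0}(\gamma)\in\su_2$, which together give $\tau^*f^\Ee=f^\Ee$. In $\Hh^3$, the hypothesis $M_{\la_1}(\gamma)\in\SU_2$ gives $M_{\la_1}^*M_{\la_1}=\I$, which via \eqref{eq:rigid-motion-H3} reduces $\tau^*f^\Hh$ to $(M_{\la_0}\inv)^*f^\Hh M_{\la_0}\inv$; this equals $f^\Hh$ as soon as $M_{\la_0}(\gamma)=\pm\I$.

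For necessity I would invoke Corollary~\ref{cor:identical-immersions}, under its tacit non-flatness hypothesis, applied to the two frames $\Phi$ and $\tau^*\Phi=M(\gamma)\Phi$ which induce the same immersion. The metric-preservation argument at the end of the proof of Theorem~\ref{theorem:rigid-motions} first forces $M_{\la_0}(\gamma)\in\SU_2$ in the Euclidean case and $M_{\la_1}(\gamma)\in\SU_2$ in the hyperbolic case. Next, triviality of the conjugation action on the non-flat image of $f$ pins $M_{\la_0}(\gamma)$ down into the kernel $\{\pm\I\}$ of the corresponding $\SU_2$- or $\SL_2\Cc$-action on $\Xx^3$; finally, in the Euclidean case the residual translation condition $c+c^*=0$ yields $(\la_1-\la_0)\dot{M}_{\la_0}(\gamma)\in\su_2$ exactly as in the sufficiency direction.

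The main obstacle will be the bookkeeping around the LWR gauge $g$ that Corollary~\ref{cor:identical-immersions} a priori allows: one must verify that $g$ can be taken to be the identity in the monodromy setting, so that the constraints on the dressing $R$ in that corollary translate directly into the stated constraints on $M(\gamma)$. This is forced because $M(\gamma)$ is by definition the unique $z$-constant left factor relating $\Phi$ to $\tau^*\Phi$, while an LWR gauge is $z$-dependent; any gauge adjustment produced by the corollary can therefore be absorbed into a change of reference LWR frame without altering $M(\gamma)$.
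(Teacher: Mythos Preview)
Your approach is correct and essentially the same as the paper's: both reduce the closing problem to the identity case of the rigid-motion/symmetry analysis (Theorem~\ref{theorem:rigid-motions}, Corollary~\ref{cor:identical-immersions}, and Theorems~\ref{theorem:symmetries-H3}/\ref{theorem:symmetries-E3}) applied to each deck transformation, with the holomorphic dressing taken to be the monodromy $M(\gamma)$. Your write-up is in fact more explicit than the paper's one-line proof.

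One remark on your last paragraph: the reason the gauge $g$ from Corollary~\ref{cor:identical-immersions} can be taken to be the identity is not quite that ``an LWR gauge is $z$-dependent'' and can be ``absorbed into a change of reference frame.'' The cleaner justification is that a deck transformation satisfies $\tau^*\xi=\xi$ (the potential lives on $\Sigma$), so in the proof of Corollary~\ref{cor:identical-immersions} the two potentials already agree and the gauge relating them may be chosen as $g=\I$; then $R=M(\gamma)$ on the nose and the conditions on $R$ become the stated conditions on $M(\gamma)$. Equivalently, your direct route through the metric formula of Theorem~\ref{theorem:fundamental-forms} and the rigid-motion formulas \eqref{eq:rigid-motion-E3}--\eqref{eq:rigid-motion-H3} already works on $M(\gamma)$ itself, so you never actually need to pass through an auxiliary pair $(R,g)$.
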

\begin{proof}
	It is a direct consequence of theorems~\ref{theorem:symmetries-E3} and~\ref{theorem:symmetries-H3} applied to every deck transformation $\tau$ of $\widetilde{\Sigma}$, and of corollary~\ref{cor:identical-immersions}.
\end{proof}


\section{Intrinsic surfaces of revolution}\label{sec:examples}

In this section, $\widetilde{\Cc^*}$ is the universal cover of $\Cc^*$.

\begin{definition}\label{definition:intrinsic-surf-rev}
	An immersion $f\colon \widetilde{\Cc^*}\to\Xx^3$ is an \textbf{intrinsic surface of revolution} if there exists a function $m\colon \widetilde{\Cc^*}\to\Rr_{>0}$, depending only on $\abs{z}$, such that the metric of $f$ reads
	\begin{equation*}
		ds^2 = m^2\abs{dz}^2.
	\end{equation*} 
\end{definition}

We construct all minimal and CMC $1$ intrinsic surfaces of revolution with the LWR. We first solve the Gauss-Codazzi equations for these surfaces. proposition~\ref{proposition:metric-hopf-revolution} together with Bonnet's theorem implies the existence of various minimal and CMC $1$ surfaces of revolution. We then show how to explicitly construct them via LWR (section~\ref{sec:surf-rev-generic}). Special cases occur when they descend to $\Cc^*$ and conformally extend to $z=0$ (Enneper surfaces, see section~\ref{sec:enneper}). Another special case are extrinsic surfaces of revolution (catenoids, see section~\ref{sec:catenoids}).

\subsection{Solution to the Gauss-Codazzi equations}

\begin{proposition}\label{proposition:metric-hopf-revolution}
	Let $f$ be a minimal or CMC $1$ intrinsic surface of revolution.
	\begin{itemize}
		\item If $f$ is flat, then there exist $a>0$ and $\alpha\in\Rr$ such that the metric of $f$ is
		\begin{equation}\label{eq:metric-revolution-flat}
			ds^2 = a^4\abs{z}^{4\alpha}\abs{dz}^2.
		\end{equation}
		\item If $f$ is not flat, then there exist $a,b>0$, $\alpha<\beta\in\Rr$ and $\nu\in[0,2\pi)$ such that the metric and Hopf differential of $f$ are
		\begin{equation}\label{eq:metric-revolution}
			ds^2 = \left(a^2 \abs{z}^{2\alpha} + b^2 \abs{z}^{2\beta}\right)^2\abs{dz}^2,
		\end{equation}
		\begin{equation}\label{eq:hopf-revolution}
			Qdz^2 = e^{i\nu}ab(\beta - \alpha)z^{\alpha+\beta -1}dz^2.
		\end{equation}
	\end{itemize}
\end{proposition}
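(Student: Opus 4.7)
My approach is to extract the forms of $\omega$ (where $ds^2 = 4e^{2\omega}|dz|^2$) and $Q$ directly from the Gauss-Codazzi equations \eqref{eq:gauss-codazzi}, exploiting the rotational symmetry. Since $m$ depends only on $r:=|z|$, so does $\omega$, and using the Laplacian in polar coordinates I get $\omega_{z\bar z}=\tfrac14(\omega''+\omega'/r)$, where primes denote $d/dr$. The first Gauss-Codazzi equation then forces $|Q|$ to depend only on $r$. Combined with $Q$ being holomorphic on $\widetilde{\Cc^*}$, a short argument in the coordinate $w=\log z$ (observing that $\log|Q|$ is harmonic and depends only on $\Re w$, hence is affine in $\Re w$) yields $Q(z)=cz^\gamma$ for some $c\in\Cc$ and $\gamma\in\Rr$.

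If $c=0$, the Gauss equation reduces to $(r\omega')'=0$, which integrates to $\omega(r)=2\alpha\log r+\mathrm{const}$ for constants $\alpha\in\Rr$ and $a>0$ absorbing the integration constants, giving exactly \eqref{eq:metric-revolution-flat}.

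In the non-flat case $c\ne 0$, the equation becomes $|c|^2 r^{2\gamma}=e^{2\omega}(\omega''+\omega'/r)$. The key step is the substitution $s=\log r$ together with $\sigma(s):=\omega(r)-(\gamma+1)s$, which reduces this to the Liouville-type ODE $\ddot\sigma=|c|^2 e^{-2\sigma}$. Multiplying by $\dot\sigma$ yields the first integral $\dot\sigma^2=D^2-|c|^2 e^{-2\sigma}$ for a real constant $D$; existence of a real solution forces $D>0$. Separation of variables gives, up to a phase $s_0\in\Rr$,
\begin{equation*}
    \sigma(s)=\log(|c|/D)+\log\cosh\bigl(D(s-s_0)\bigr),
\end{equation*}
so that, back in the variable $r=e^s$,
\begin{equation*}
    2e^{\omega(r)}=\frac{|c|}{D}\left(e^{Ds_0}r^{\gamma+1-D}+e^{-Ds_0}r^{\gamma+1+D}\right).
\end{equation*}

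Setting $2\alpha:=\gamma+1-D$ and $2\beta:=\gamma+1+D$ so that $\alpha<\beta$, and defining $a^2,b^2>0$ as the two positive coefficients, one reads off the claimed metric \eqref{eq:metric-revolution}. The relation $a^2b^2=(|c|/D)^2$ coming from the first integral then gives $|c|=(\beta-\alpha)ab$, and writing $c=e^{i\nu}|c|$ with $\nu\in[0,2\pi)$ produces \eqref{eq:hopf-revolution} since $\gamma=\alpha+\beta-1$. I expect the main obstacle to be spotting the substitution $\sigma=\omega-(\gamma+1)\log r$ that both eliminates the explicit $r^{2\gamma}$ factor and turns the equation into the classical Liouville form; once this is in place, the remainder is separation of variables and bookkeeping of constants.
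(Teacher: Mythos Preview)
Your proof is correct and follows the same overall architecture as the paper --- first pin down $Q=cz^\gamma$ from rotational invariance, then solve the resulting ODE for $\omega$ --- but your execution of both steps differs from the paper's in ways worth noting. For the form of $Q$, the paper computes $h^{-1}h_\theta$ with $h=|Q|^2$ and argues that $zQ^{-1}Q_z$ is holomorphic and equal to its conjugate, hence a real constant; your harmonic-function argument in the coordinate $w=\log z$ is slightly more direct. The more substantial difference is in solving $\omega_{rr}+r^{-1}\omega_r=|c|^2r^{2\gamma}e^{-2\omega}$: the paper simply writes down the two-parameter family \eqref{eq:solutions-omega}, checks that it satisfies the equation, and then invokes Picard--Lindel\"of together with surjectivity of $(a,b)\mapsto(\omega(1),\omega'(1))$ to conclude that there are no other solutions. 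Your substitution $s=\log r$, $\sigma=\omega-(\gamma+1)s$ reduces the problem to the autonomous Liouville equation $\ddot\sigma=|c|^2e^{-2\sigma}$, which you then integrate via the energy first integral. This is more constructive --- it derives the form of the solution rather than verifying a guess --- and it makes transparent why the answer has the $\cosh$ (i.e.\ sum-of-powers) structure; the paper's route is shorter if one already knows the answer but gives less insight into where it comes from.
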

\begin{proof}
	Write $z = re^{i\theta}$ and $ds^2 = 4e^{2\omega}\abs{dz}^2$ with $\omega\colon\widetilde{\Cc^*}\to\Rr$. 
	By assumption on $f$, the function $\omega$ only depends on $r$.
	The Gauss-Codazzi equations \eqref{eq:gauss-codazzi} read
	\begin{equation}\label{eq:gauss-codazzi-revolution}
		\omega_{rr} + r\inv \omega_r = \abs{Q}^2e^{-2\omega},\quad Q_{\zbar}=0.
	\end{equation}

		If $f$ is flat, $Q$ is constantly vanishing.
		In this case, all the solutions to \eqref{eq:gauss-codazzi-revolution} are given by
		\begin{equation*}
			\omega(r) = c_1 + c_2\log(r),\quad c_1,c_2\in\Rr. 
		\end{equation*}
		With $a:=2e^{c_1}>0$ and $\alpha := c_2\in\Rr$, the metric reads as in \eqref{eq:metric-revolution-flat}.
		
		If $f$ is not flat, then $Q$ is not constantly vanishing.
		The Codazzi equation (\eqref{eq:gauss-codazzi-revolution}) implies that $Q$ is holomorphic.
		Therefore, there exists a domain $D\subset\widetilde{\Cc^*}$ on which $Q$ never vanishes.
		On $D$, consider $h=\abs{Q}^2$ and compute
		\begin{equation}\label{eq:hinvh_theta}
			h\inv h_\theta = ih\inv (zh_z - \bar{z}h_\zbar) = i\left(zQ\inv Q_z - \bar{zQ\inv Q_z}\right).
		\end{equation}
		The Gauss equation in \eqref{eq:gauss-codazzi-revolution} implies that $\abs{Q}^2$ only depends on $r$, so $h_\theta = 0$ and by \eqref{eq:hinvh_theta},
		\begin{equation*}
			zQ\inv Q_z = \bar{z Q\inv Q_z}.
		\end{equation*}
		The left-hand side is holomorphic whereas the right-hand side is anti-holomorphic, so there exists $\gamma\in \Rr$ such that
		\begin{equation*}
			zQ\inv Q_z = \gamma.
		\end{equation*}
		Solving this equation for $Q$ yields for all $z\in D$,
		\begin{equation*}
			Q(z) = e^{i\nu}c z^\gamma,\quad \nu\in\Rr,\quad c>0.
		\end{equation*}
		By holomorphicity of $Q$, this holds on $\widetilde{\Cc^*}$.
		Going back to the Gauss equation in \eqref{eq:gauss-codazzi-revolution}, we have
		\begin{equation}\label{eq:equation-omega}
			\omega_{rr} + r\inv \omega_r = c^2 r^{2\gamma}e^{-2\omega}
		\end{equation}
		and a two-parameter family of solutions is given by
		\begin{equation}\label{eq:solutions-omega}
			\omega(r) = \log\left(\frac{1}{2}\left(a^2 r^{2\alpha} + b^2 r^{2\beta}\right)\right)
		\end{equation}
		where $a>0$, $b>0$ and $\alpha<\beta$ are defined by
		\begin{equation*}
			\alpha = \frac{1}{2}\left(1+\gamma - \frac{c}{ab}\right),\quad \beta = \frac{1}{2}\left(1+\gamma + \frac{c}{ab}\right).
		\end{equation*}
		With such $\omega$, the metric and Hopf differential of $f$ read as in equations \eqref{eq:metric-revolution} and \eqref{eq:hopf-revolution}.
		
		Note that all solutions to \eqref{eq:equation-omega} are given by \eqref{eq:solutions-omega}.
		Indeed, let $\omega$ be given by \eqref{eq:solutions-omega}. The function
		\begin{equation*}
			\psi\colon\Rr_+^*\times \Rr_+^*\to\Rr^2,\quad (a,b)\mapsto(\omega(1),\omega'(1))
		\end{equation*}
		is surjective, and the ODE is given by
		\begin{equation*}
			F\colon\Rr_{>0}\times \Rr^2\to\Rr^2,\quad (r,x, y)\mapsto(y, c^2 r^{2\gamma}e^{-2x} - r\inv y)
		\end{equation*}
		which is locally Lipschitz with respect to the variable $(x,y)$. By the Picard-Lindel\"of theorem, \eqref{eq:solutions-omega} gives all solutions for $a,b>0$.
\end{proof}

\subsection{LWR potential for intrinsic surfaces of revolution}\label{sec:surf-rev-generic}

Let $a,b>0$, $\alpha<\beta\in\Rr$, $\nu\in[0, 2\pi)$ and $x = (az^\alpha, b z^\beta)$. Consider the following LWR data on $\widetilde{\Sigma}$:
\begin{equation}\label{eq:data-intrinsic-surface-revolution}
	\xi = \la xx^\perp dz,\quad \Phi(1) = \I,\quad (\la_0,\la_1) = \begin{cases}
		(0, e^{i\nu}) & \text{in }\Ee^3,\\
		(-e^{i\nu}, 0) & \text{in }\Hh^3.
	\end{cases}
\end{equation}

\begin{theorem}\label{theorem:radial-metric-surfaces}
	The LWR data \eqref{eq:data-intrinsic-surface-revolution} induces an intrinsic surface of revolution with metric and Hopf differential as in \eqref{eq:metric-revolution}--\eqref{eq:hopf-revolution}. Moreover, any non-flat, minimal or CMC $1$ intrinsic surface of revolution can be obtained this way, up to an isometry and a coordinate change.
\end{theorem}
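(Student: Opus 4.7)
The plan is to split into a straightforward forward direction (direct calculation using the tools of Section 4) and a converse direction (invoke Proposition~\ref{proposition:metric-hopf-revolution} together with Bonnet's theorem).

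For the forward direction, I would first verify that \eqref{eq:data-intrinsic-surface-revolution} is valid LWR data: since $x^\perp x = 0$ by Remark~\ref{remark:xperpx}, the matrix $A = xx^\perp$ is nilpotent, so $\xi = \la xx^\perp dz$ is an LWR potential on $\widetilde{\Cc^*}$ with $B = 0$. The key observation is that because $B=0$, the LWR frame satisfies $d\Phi_0 = 0$ at $\la = 0$, and the initial condition $\Phi(1) = \I$ forces $\Phi_0 \equiv \I$ on all of $\widetilde{\Cc^*}$. In particular, $y_{\la=0} = \Phi_0 x = x$, and
\begin{equation*}
	\norm{x}^2 = a^2\abs{z}^{2\alpha} + b^2\abs{z}^{2\beta}.
\end{equation*}
In the Euclidean case with $(\la_0,\la_1) = (0,e^{i\nu})$, Theorem~\ref{theorem:fundamental-forms} applied to $y_{\la_0} = x$ gives directly
\begin{equation*}
	ds^2 = \abs{e^{i\nu}}^2\norm{x}^4\abs{dz}^2 = \left(a^2\abs{z}^{2\alpha} + b^2\abs{z}^{2\beta}\right)^2\abs{dz}^2,
\end{equation*}
matching \eqref{eq:metric-revolution}. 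In the hyperbolic case with $(\la_0,\la_1) = (-e^{i\nu},0)$, the relevant spinor value is $y_{\la_1} = \Phi_0 x = x$, so the same computation with $\abs{\la_1-\la_0} = 1$ yields the same metric.

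For the Hopf differential, I would apply Remark~\ref{remark:hopf} with $B = 0$, reducing the computation to
\begin{equation*}
	Qdz^2 = (\la_1 - \la_0)\det(x, x_z)\, dz^2.
\end{equation*}
A two-by-two determinant gives $\det(x,x_z) = ab(\beta-\alpha)z^{\alpha+\beta-1}$, and since $\la_1 - \la_0 = e^{i\nu}$ in both ambient spaces, one recovers \eqref{eq:hopf-revolution}.

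For the converse, let $f$ be any non-flat minimal or CMC $1$ intrinsic surface of revolution on $\widetilde{\Cc^*}$. By Proposition~\ref{proposition:metric-hopf-revolution}, the metric and Hopf differential of $f$ take the form \eqref{eq:metric-revolution}--\eqref{eq:hopf-revolution} for some parameters $a,b>0$, $\alpha<\beta$, $\nu\in[0,2\pi)$. Feeding those parameters into \eqref{eq:data-intrinsic-surface-revolution} and applying the forward direction produces an immersion $\tilde f$ with the same metric and Hopf differential as $f$, hence with the same first and second fundamental forms (recalling that $H$ is fixed to $0$ or $1$, so the full second fundamental form is determined by $Q$ and $ds^2$ via \eqref{eq:definition-hopf}). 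Bonnet's theorem then produces an orientation-preserving ambient isometry of $\Xx^3$ carrying $\tilde f$ to $f$.

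The arithmetic is routine; the only mildly subtle points are (a) working on $\widetilde{\Cc^*}$, which is needed because $z^\alpha,z^\beta$ are multi-valued on $\Cc^*$ for non-integer exponents, and (b) observing that the freedom in ``a coordinate change'' in the statement is not needed here because both $f$ and $\tilde f$ are already parametrized by the same conformal coordinate $z$; the coordinate change is only reserved for reparametrizing an abstract surface of revolution into the normal form \eqref{eq:metric-revolution} with the specific exponents. No genuine obstacle is expected.
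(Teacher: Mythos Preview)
Your proposal is correct and follows essentially the same approach as the paper: forward direction by observing $\Phi_0\equiv\I$ (since $\xi$ vanishes at $\la=0$) and then invoking Theorem~\ref{theorem:fundamental-forms}, converse via Proposition~\ref{proposition:metric-hopf-revolution} plus Bonnet. Your write-up is slightly more explicit (separating the $\Ee^3$ and $\Hh^3$ metric computations, and computing $\det(x,x_z)$ directly), but the underlying argument is identical.
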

\begin{proof}
	At $\la = 0$, the potential $\xi$ is constantly vanishing, so the frame $\Phi$ is constantly the identity. Therefore, the metric of the induced immersion $f$ can be computed with theorem~\ref{theorem:fundamental-forms} and agrees with \eqref{eq:metric-revolution}.
	We deduce that $f$ is an intrinsic surface of revolution.
	Theorem~\ref{theorem:fundamental-forms} also gives the Hopf differential as in \eqref{eq:hopf-revolution} and the surface is not flat.
	
	Conversely, let $f\colon\widetilde{\Cc^*}\to\Xx^3$ be a minimal or CMC $1$, non-flat, intrinsic surface of revolution. By proposition~\ref{proposition:metric-hopf-revolution}, there exist $a,b>0$, $\alpha<\beta$ and $\nu\in\Rr$ such that the metric and Hopf differential of $f$ are given by Equations~\eqref{eq:metric-revolution}--\eqref{eq:hopf-revolution}.
	By the first part of the proof together with Bonnet's Theorem, up to a rigid motion, $f=\LWR(\Sigma, \xi, \Phi, \la_0, \la_1)$ with $\la_1-\la_0 = e^{i\nu}$.
\end{proof}

\subsection{Enneper surfaces}\label{sec:enneper}

\begin{definition}
	A minimal or CMC $1$ immersion is an \textbf{Enneper surface} if it is a non-flat, intrinsic surface of revolution that descends from the universal cover of $\Cc^*$ to a conformal immersion on $\Cc$.
\end{definition}

Let $r>0$ and $n\in\Nn_{\geq 0}$. Consider the following LWR data on $\Cc$:
\begin{equation}\label{eq:lwr-data-enneper}
	\xi = \matrix{0}{r z^n}{\la}{0}dz,\quad \Phi(0)=\I,\quad (\la_0,\la_1) = \begin{cases}
		(0,1) \text{ in } \Ee^3, \\
		(1,0) \text{ in } \Hh^3.
	\end{cases}
\end{equation}

\begin{theorem}\label{theorem:enneper}
	The LWR data \eqref{eq:lwr-data-enneper} induces an Enneper surface. Moreover, any Enneper surface can be obtained this way, up to an  isometry and coordinate change.
\end{theorem}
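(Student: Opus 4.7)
The plan is to establish both directions by computing and matching the geometric data (metric and Hopf differential), using Theorem~\ref{theorem:fundamental-forms} for the forward direction and Proposition~\ref{proposition:metric-hopf-revolution} together with Theorem~\ref{theorem:radial-metric-surfaces} for the converse. For the forward direction, the potential in~\eqref{eq:lwr-data-enneper} has polynomial entries, so $\Phi$ exists globally on $\Cc$ without monodromy. At $\la = 0$ the potential reduces to $\smatrix{0}{rz^n}{0}{0}dz$, which integrates explicitly to the unipotent frame $\Phi_0(z) = \smatrix{1}{rz^{n+1}/(n+1)}{0}{1}$. Taking the spinor $x = (0, i)$ for the nilpotent coefficient $\smatrix{0}{0}{1}{0}$ and substituting $y := \Phi_0 x$ into Theorem~\ref{theorem:fundamental-forms} yields metric
\begin{equation*}
  ds^2 = \Bigl(1 + \tfrac{r^2}{(n+1)^2}|z|^{2n+2}\Bigr)^2 |dz|^2
\end{equation*}
and Hopf differential proportional to $rz^n\,dz^2$. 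The metric depends only on $|z|$ and is bounded below by $1$, so the induced immersions in $\Ee^3$ and $\Hh^3$ are non-flat intrinsic surfaces of revolution that are globally defined on $\Cc$, hence Enneper surfaces.

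For the converse, given an Enneper surface $f$, restrict to $\Cc^*$ and apply Proposition~\ref{proposition:metric-hopf-revolution} to obtain parameters $a, b > 0$, $\alpha < \beta$, $\nu \in [0, 2\pi)$ such that the metric and Hopf differential take the form \eqref{eq:metric-revolution}--\eqref{eq:hopf-revolution}. The hypothesis that $f$ extends conformally to $z = 0$ imposes two constraints: holomorphicity of $Q$ at the origin forces $\alpha + \beta - 1 =: n \in \Nn_{\geq 0}$, and non-degeneracy of the conformal factor there forces $\alpha = 0$ (since $\alpha > 0$ would make the metric vanish at the origin and $\alpha < 0$ would make it blow up). Thus $\alpha = 0$ and $\beta = n+1$. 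A holomorphic coordinate change $z \mapsto cz$ with $|c| = a^{-2}$ and $\arg c = -\nu/(n+2)$ normalizes $a = 1$ and $\nu = 0$, after which the remaining parameter is $b(n+1) =: r > 0$, matching the LWR data~\eqref{eq:lwr-data-enneper}. By Bonnet's theorem together with the forward direction, $f$ is then congruent up to a rigid motion to the surface produced by~\eqref{eq:lwr-data-enneper}.

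The main obstacle is the extension-to-the-origin argument in the converse: one must carefully isolate the constraints on $\alpha, \beta$ coming from requiring a conformal immersion at $z = 0$ (as opposed to merely on $\Cc^*$), and in particular rule out conical-type singularities by way of the $\alpha = 0$ conclusion. A minor but easily overlooked bookkeeping point is that in the hyperbolic case $\la_1 - \la_0 = -1$, so the Hopf differential picks up a sign relative to the Euclidean case; this is absorbed by a further coordinate rotation $z \mapsto \zeta z$ with $\zeta^{n+2} = -1$, which I would mention parenthetically but not dwell on.
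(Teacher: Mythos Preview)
Your proposal is correct and follows essentially the same approach as the paper: compute $\Phi_0$ explicitly, read off the metric and Hopf differential via Theorem~\ref{theorem:fundamental-forms}, and for the converse use the classification of intrinsic surfaces of revolution to pin down $\alpha=0$ and $\beta=n+1$, then normalize by a linear coordinate change and invoke Bonnet. The only cosmetic difference is that the paper absorbs the sign $\la_1-\la_0=\pm 1$ directly into the single coordinate change $w=e^{i\rho/(n+2)}a^2 z$ with $e^{i\rho}=(\la_1-\la_0)e^{i\nu}$, whereas you split this into two steps; both are fine.
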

\begin{proof}
	We first show that the LWR data \eqref{eq:lwr-data-enneper} yields an Enneper surface. Let $f = \LWR(\Cc, \xi, \Phi, \la_0, \la_1)$. One can compute explicitly:
	\begin{equation}\label{eq:spinor-enneper}
		\Phi_0(z) = \matrix{1}{\frac{r z^{n+1}}{n+1}}{0}{1}.
	\end{equation}
	By theorem~\ref{theorem:fundamental-forms}, the metric of $f$ is
	\begin{equation}\label{eq:metric-enneper}
		ds^2 =  \left(1 + \frac{r^2\abs{z}^{2n+2}}{(n+1)^2}\right)^2\abs{dz}^2.
	\end{equation}
	The metric is rotationally invariant, so $f$ is an Enneper surface. Note that the Hopf differential of $f$ is
	\begin{equation}\label{eq:hopf-enneper}
		Qdz^2 =  (\la_1-\la_0) r z^n dz^2.
	\end{equation}
	
	We now show that any Enneper surface $f\colon\Cc\to\Xx^3$ can be obtained this way. 
	By definition, the metric of $f$ is radial and $f$ is not flat.
	By theorem~\ref{theorem:radial-metric-surfaces}, $f$ can be obtained with the LWR data \eqref{eq:data-intrinsic-surface-revolution}.
	The metric of $f$ is given by
	\begin{equation*}
		ds^2 = \left({a}^2\abs{z}^{2\alpha} + {b}^2 \abs{z}^{2\beta}\right)^2 \abs{dz}^2.
	\end{equation*}
	But $f$ is conformal at $z=0$, so $\alpha\beta=0$ and $\alpha+\beta\geq 0$.
	Because $\alpha<\beta$, we have $\alpha = 0$ and $\beta>0$.
	The Hopf differential is
	\begin{equation*}
		Qdz^2 = e^{i\nu} ab \beta z^{\beta - 1}dz^2.
	\end{equation*}
	The immersion $f$ is well-defined on $\Cc$, so there exists $n\in\Nn_{\geq 0}$ such that $\beta -1 = n$.
	Therefore, the metric and Hopf differential of $f$ are
	\begin{equation*}
		ds^2 = \left(a^2 + b^2\abs{z}^{2n+2}\right)^2\abs{dz}^2,
	\end{equation*}
	\begin{equation*}
		Qdz^2 = e^{i\nu}ab(n+1)z^{n}dz^2.
	\end{equation*}
	The immersion is not flat, so $Q\neq 0$ and $a\neq 0$. After the coordinate change
	\begin{equation*}
		w = e^{\frac{i\rho}{n+2}}a^2z
	\end{equation*}
	where $\rho\in\Rr$ is defined by
	\begin{equation*}
		e^{i\rho} := (\la_1-\la_0)e^{i\nu} = \pm e^{i\nu},
	\end{equation*}
	and after letting
	\begin{equation*}
		r = \frac{(n+1)b}{a^{2n+3}}>0,
	\end{equation*}
	one can compute that the metric and Hopf differential of $f$ read as equations \eqref{eq:metric-enneper} and \eqref{eq:hopf-enneper} in the coordinate $w$. 
	By Bonnet's theorem, $f=\LWR(\Cc, \xi, \Phi, \la_0, \la_1)$, up to a coordinate change and an isometry.
\end{proof}

\begin{corollary}
	Any Enneper surface admits an extrinsic rotational symmetry of order $n+2$.
\end{corollary}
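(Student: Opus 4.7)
The plan is to exhibit a concrete order-$(n+2)$ domain rotation and apply theorems~\ref{theorem:symmetries-E3} and~\ref{theorem:symmetries-H3} to promote it to an ambient rotation. Concretely, take $\tau(z) := \omega z$ with $\omega := e^{2\pi i/(n+2)}$; since $f$ is not flat on $\Cc$, any ambient isometry $\jJ$ satisfying $\tau^*f = \jJ\circ f$ will automatically inherit a cyclic structure from $\tau$, and I just need to exhibit such a non-trivial $\jJ$.

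First I would pull back the potential~\eqref{eq:lwr-data-enneper}, getting
\begin{equation*}
	\tau^*\xi = \matrix{0}{r\omega^{n+1}z^n}{\la\omega}{0}dz,
\end{equation*}
and search for a constant diagonal LWR gauge $g_0 := \diag(\omega^{(n+1)/2}, \omega^{-(n+1)/2})$ conjugating $\tau^*\xi$ back to $\xi$. The verification that $(\tau^*\xi)\cdot g_0 = \xi$ is a direct $2\times 2$ computation whose crux is $\omega\cdot\omega^{n+1}=\omega^{n+2}=1$. Note that $g_0\in\SU_2$ and is independent of $\la$.

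Next, since $(\tau^*\Phi)g_0$ and $\Phi$ are both LWR frames for the same potential $\xi$, they differ by a $\la$-holomorphic left factor $R$; evaluating at $z=0$ using $\Phi(0)=\I$ and $\tau(0)=0$ forces $R=g_0$. Thus $\tau^*\Phi = g_0\Phi g_0^{-1}$ fits the form required by theorem~\ref{theorem:symmetries-E3} (with $R_{\la_0}=g_0\in\SU_2$) and by theorem~\ref{theorem:symmetries-H3} (with $R_{\la_1}=g_0\in\SU_2$) simultaneously, so in both ambient spaces there exists an orientation-preserving isometry $\jJ$ with $\tau^*f = \jJ\circ f$.

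Finally, I would identify $\jJ$ as a rotation of order $n+2$. Because $R$ is constant in $\la$, the translation term in~\eqref{eq:rigid-motion-E3-T} vanishes in $\Ee^3$, so $\jJ$ acts by conjugation by $g_0$; in $\Hh^3$ the same holds, since $(g_0^{-1})^* = g_0$ for the unitary diagonal $g_0$. Under the spin double cover, the element $g_0$ corresponds to a rotation of $\Xx^3$ by angle $2\pi(n+1)/(n+2)$ about a fixed axis. I expect the only mildly delicate point to be the order computation: a priori the induced rotation could have order dividing $n+2$, but the coprimality $\gcd(n+1,n+2)=1$ immediately forces the order to equal $n+2$, yielding the claimed extrinsic rotational symmetry.
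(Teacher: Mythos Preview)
Your proof is correct and follows the same strategy as the paper's own argument: both pull back the potential by $\tau(z)=e^{2\pi i/(n+2)}z$, find a constant diagonal $\SU_2$ gauge relating $\tau^*\xi$ to $\xi$, evaluate at $z=0$ using $\Phi(0)=\I$ to identify the dressing $R$, and then read off the ambient rotation via theorems~\ref{theorem:symmetries-E3} and~\ref{theorem:symmetries-H3}. The only cosmetic difference is that your gauge $g_0=\diag(\omega^{(n+1)/2},\omega^{-(n+1)/2})$ is (up to sign) the inverse of the paper's, so your rotation angle comes out as $2\pi(n+1)/(n+2)$ rather than $\theta=2\pi/(n+2)$; your coprimality remark $\gcd(n+1,n+2)=1$ then cleanly recovers the order $n+2$, which the paper's angle gives directly.
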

\begin{proof}
	Let $f$ be an Enneper surface. By theorem~\ref{theorem:enneper}, $f$ is given by the LWR data \eqref{eq:lwr-data-enneper}. Let $\theta := 2 \pi/(n+2)$,  $\tau(z) := e^{i\theta}z$ and compute
	\begin{equation*}
		\tau^*\xi = \xi\cdot g,\quad g = \matrix{e^{-i\theta/2}}{0}{0}{e^{i \theta/2}},
	\end{equation*}
	so that
	\begin{equation}\label{eq:RPhig}
		\tau^*\Phi = R \Phi g
	\end{equation}
	where $R$ is holomorphic in $\la$ and independent of $z$. Evaluating \eqref{eq:RPhig} at $z=0$ and recalling that $\Phi(0) = \I$ gives $R=g\inv$. Therefore, $R\in\SU_2$ is independent of $\lambda$. By theorems~\ref{theorem:symmetries-H3} and~\ref{theorem:symmetries-E3}, $\tau$ induces a symmetry of $f$ and a direct computation shows that this symmetry is a rotation of angle $\theta$ in both ambient spaces.
\end{proof}

\subsection{Catenoids}\label{sec:catenoids}

In this section, we review catenoids in the framework of the LWR. We exhibit a family of Fuchsian potentials that induce catenoids and show that any catenoid can be obtained this way.

\begin{definition}
	A minimal or CMC 1 conformal immersion $f\colon\widetilde{\Cc^*}\to\Xx^3$ is a \textbf{catenoid} if it is a non-flat extrinsic surface of revolution: for all $t\in\Rr$,
	\begin{equation*}
		\tau_t^* f = \rR(t)\circ f
	\end{equation*}
	where $\tau_t$ is a lift of $z\mapsto e^{it}z$ and $\rR:\Rr\to\Iso(\Xx^3)$ is a 1-parameter group of rotations.
\end{definition}

\begin{remark}
	We do not assume that all catenoids are closed on $\Cc^*$. However, noting that the group $\rR$ is compact, any catenoid closes on some angular sector in $\widetilde{\Cc^*}$. With $t_0:=\min\{t>0\mid \tau_t^*f = f\}$, we define the \textbf{wrapping number} of $f$ as $r:=2\pi/t_0$.
\end{remark}

\begin{proposition}\label{prop:catenoid-potential}
	Let $\Phi$ be an LWR frame inducing a catenoid at the evaluation points $(\la_0, \la_1) = (0,1)$. Up to an LWR gauge, the LWR potential of $\Phi$ reads
	\begin{equation}\label{eq:potential-catenoid}
		\xi = Kz\inv dz,\quad K=\matrix{0}{1}{q\la + p}{0}
	\end{equation}
	where $q\in\Rr^*$ and $p>0$. Moreover, the Hopf differential of the catenoid is $qz^{-2}dz^2$ and its wrapping number is $2\sqrt{p}$.
\end{proposition}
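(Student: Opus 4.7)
My plan combines the intrinsic-surface-of-revolution structure of a catenoid (theorem~\ref{theorem:radial-metric-surfaces}) with the constraints imposed by the extrinsic rotational symmetry, and then explicitly gauges the resulting potential into the claimed off-diagonal form.

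First, since a catenoid is a non-flat intrinsic surface of revolution, theorem~\ref{theorem:radial-metric-surfaces} allows me to realize $f$ (up to an isometry and a coordinate change) with LWR data $\xi = \la xx^\perp dz$, where $x = (az^\alpha, bz^\beta)$ with $a,b>0$ and $\alpha<\beta$, at the evaluation points $(\la_0,\la_1)=(0,1)$. The extrinsic symmetry $\tau_t^*f = \rR(t)\circ f$ preserves the Hopf differential, and computing the pullback of the Hopf formula from proposition~\ref{proposition:metric-hopf-revolution} under $z\mapsto e^{it}z$ produces a factor $e^{i(\alpha+\beta+1)t}$. Invariance for every $t\in\Rr$ therefore forces $\alpha+\beta=-1$. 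Writing $c:=\alpha+\tfrac12$, this is a nonzero real number since $\alpha<\beta$ imply $\alpha<-\tfrac12$, and it will be the central numerical parameter.

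Second, I apply the diagonal $z$-dependent LWR gauge $g = \diag(z^c, z^{-c})$, chosen precisely so that the two off-diagonal entries of $xx^\perp$ (originally $a^2z^{2\alpha}$ and $-b^2z^{2\beta}$) both collapse to multiples of $z^{-1}$. A direct computation using $\alpha+\beta=-1$ and $c=\alpha+\tfrac12$ yields
\[
\xi\cdot g \;=\; \frac{1}{z}\smatrix{c-\la ab}{\la a^2}{-\la b^2}{\la ab - c}dz,
\]
i.e.\ $z^{-1}dz$ times a matrix that is constant in $z$ and affine in $\la$. A further constant-in-$z$ gauge $h\in\SL_2\Cc$ that conjugates the $\la=0$ piece $c\sigma_3$ into the companion form $\smatrix{0}{1}{c^2}{0}$ (a normalized change of eigenbasis with $\det h = 1$) simultaneously sends the nilpotent $\la$-part $\vectorr{a}{b}\hvectorr{-b}{a}$ to $\smatrix{0}{0}{-2abc}{0}$, thanks to the transformation law \eqref{eq:spin-property-1} for spinors. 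The outcome is $\xi\cdot(gh) = Kz^{-1}dz$ with $K = \smatrix{0}{1}{q\la+p}{0}$, where $p = c^2 > 0$ and $q = -2abc = ab(\beta-\alpha)\in\Rr^*$. The Hopf differential statement $qz^{-2}dz^2$ then follows directly from remark~\ref{remark:hopf} with spinor $(0, i\sqrt{q/z})^T$ and $(\la_1-\la_0)=1$.

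Third, for the wrapping number, observe that at $\la_0 = 0$ one has $K_0 = \smatrix{0}{1}{p}{0}$ with $K_0^2 = p\I$, so $\exp(itK_0) = \cos(t\sqrt p)\I + (i\sin(t\sqrt p)/\sqrt p)K_0$. Since $Kz^{-1}dz$ is itself $\tau_t$-invariant, $\tau_t^*\Phi$ and $\Phi$ satisfy the same ODE and therefore differ by a $z$-independent left factor that is a conjugate of $\exp(itK)$. By corollary~\ref{cor:identical-immersions}, the condition $\tau_t^*f=f$ reduces to $\exp(itK_0)=\pm\I$, i.e.\ $\sin(t\sqrt p)=0$; the smallest positive $t$ with this property is $t_0=\pi/\sqrt p$, yielding $r = 2\pi/t_0 = 2\sqrt p$. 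The main subtlety I anticipate is carefully tracking initial conditions and gauge representatives in the step reducing an arbitrary catenoid-inducing $\Phi$ to the canonical $\xi = \la xx^\perp dz$; this should ultimately be harmless because any ambiguity is absorbed either by a holomorphic dressing (giving a rigid motion, theorem~\ref{theorem:rigid-motions}) or by the LWR gauge group, both of which are permitted by the statement.
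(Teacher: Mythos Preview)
Your route via theorem~\ref{theorem:radial-metric-surfaces} differs from the paper's (which works directly with the Schwarz form of the given $\xi$ and uses the rotational symmetry to force $Q=qz^{-2}$, $S=sz^{-2}$, then a monodromy analysis to pin down $q\in\Rr^*$ and $p>0$), and it has a genuine gap at the bridging step. Theorem~\ref{theorem:radial-metric-surfaces} produces a \emph{new} frame $\tilde\Phi$ with potential $\tilde\xi=\la xx^\perp dz$, at evaluation points $(0,e^{i\nu})$ in $\Ee^3$ or $(-e^{i\nu},0)$ in $\Hh^3$, and only up to an isometry \emph{and a coordinate change}. None of these three discrepancies is absorbed by the LWR gauge group or by holomorphic dressing, so your (correct) gauging of $\tilde\xi$ into $Kz^{-1}dz$ does not establish that the \emph{original} $\xi$ lies in that gauge class. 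Concretely: for $\Hh^3$ the evaluation points $(-e^{i\nu},0)$ are never $(0,1)$, so the argument does not start; for $\Ee^3$, asserting $(\la_0,\la_1)=(0,1)$ amounts to fixing $\nu=0$, which already presupposes that the Hopf coefficient is real.

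That realness of $q$ is exactly the substantive content you are skipping. It is not a formality: it expresses that the one-parameter symmetry is a genuine rotation rather than a screw motion, and in the paper it is obtained by analysing the monodromy $R=C\exp(itK)C^{-1}$ separately in each space form (unitarizability of $\exp(itK_1)$ for $\Hh^3$; vanishing of the translational part $\rho(D)+\rho(D)^*$ via proposition~\ref{prop:rigid-motion-homomorphism-E3} for $\Ee^3$). Your derivation yields $q=ab(\beta-\alpha)>0$ only because you have already placed yourself in the $\nu=0$ slice of the family~\eqref{eq:data-intrinsic-surface-revolution}; this is circular and also misses the catenoids with $q<0$ that the proposition (and theorem~\ref{theorem:catenoids-example}) allow. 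To repair the argument you would need to show, independently of theorem~\ref{theorem:radial-metric-surfaces}, that the Schwarz invariants of the given $\xi$ are $qz^{-2}$ and $sz^{-2}$ with $q$ real---which is essentially the paper's proof.
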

\begin{proof}
	Let $f\colon\widetilde{\Cc^*}\to\Xx^3$ be the catenoid induced by $\Phi$. By assumption, $f$ is not flat, so there exists a Schwarz potential inducing $f$ locally (by theorem~\ref{theorem:schwarz-potential}), and up to a gauge,
	\begin{equation*}
		\xi = \matrix{0}{1}{Q\la + S}{0}dz.
	\end{equation*}
	By corollary~\ref{corrolary:schwarz-spinor}, $Qdz^2$ is the Hopf differential of $f$ and $S$ is the Schwarzian derivative of its Gauss map. Let $t\in\Rr$ and $\tau(z) = e^{it}z$. Then
	\begin{equation*}
		\tau^*\xi = \matrix{0}{1}{\tau^*Q\la + \tau^* S}{0}e^{it}dz.
	\end{equation*}
	We put this potential into its Schwarz form with a diagonal LWR gauge $g\in\SU_2$:
	\begin{equation*}
		\tau^* \xi \cdot g = \matrix{0}{1}{e^{2it}(\tau^*Q\la + \tau^* S)}{0}dz.
	\end{equation*}
	But $f$ is a catenoid, so $\tau$ induces a rotation of the surface. By theorems~\ref{theorem:symmetries-H3} and~\ref{theorem:symmetries-E3}, there exists an LWR gauge $\tilde{g}$ such that $\tau^*\xi = \xi\cdot \tilde{g}$. 
	Therefore, $\xi$ and $\tau^* \xi \cdot g$ are Schwarz potentials that lie in the same gauge class.
	By uniqueness of the Schwarz potential (theorem~\ref{theorem:schwarz-potential}), this implies that $\xi = \tau^*\xi\cdot g$, i.e.
	\begin{equation}\label{eq:tauQ}
		\tau^* Q = e^{-2it}Q,\quad \tau^*S = e^{-2it}S.
	\end{equation}
	But $Q$ is holomorphic and not identically zero, so by \eqref{eq:tauQ} $Q$ never vanishes. Therefore there exists $q\in\Cc^*$ such that $Q = qz^{-2}$. For the same reason, there exists $s\in\Cc$ such that $S=sz^{-2}$. 
	The Schwarz potential reads
	\begin{equation*}
		\xi = \matrix{0}{1}{z^{-2}(q\la+s)}{0}dz
	\end{equation*}
	and $\xi$ extends meromorphically to $\Cc^*$. With $p:=s+1/4$,
	\begin{equation}\label{eq:simple-pole-potential}
		\xi \cdot \matrix{\sqrt{z}}{0}{-\frac{1}{2 \sqrt{z}}}{\frac{1}{\sqrt{z}}} = \matrix{0}{1}{q\la +p}{0}\frac{dz}{z}.
	\end{equation}
	Up to a gauge, we can therefore assume that $\xi=K z\inv dz$ where
	\begin{equation*}
		K = \matrix{0}{1}{\la q + p}{0}
	\end{equation*}
	with $q\in\Cc^*$ and $p\in\Cc$. 
	
	We now show that $q\in\Rr^*$ and $p>0$. The frame $\Phi$ satisfies $\Phi\inv d\Phi = Kz\inv dz$, so there exists $C=(C_\la)_{\la\in\Cc}$ independent of $z$ such that $\Phi = Cz^K$. Therefore, $\tau^*\Phi = R\Phi$ with
	\begin{equation*}
		R = C\exp(i t K) C\inv.
	\end{equation*}
	Recall that for all $t$, $\tau$ induces a rotation of $f^\Xx$ with a fixed axis.
	
	In $\Hh^3$, by theorem~\ref{theorem:symmetries-H3}, $R_1\in\SU_2$ for all $t$. But this implies that $\exp(i t K_1)$ is unitarizable for all $t$, which in turn implies that $\det K_1\leq 0$. Assume by contradiction that $\det K_1 = 0$. Then $\exp(i t K_1) = \smatrix{1}{i t}{0}{1}$ and this matrix is not unitarizable by $\SL_2\Cc$-conjugation for all $t$. Therefore $\det K_1<0$, i.e. $p+q\in\Rr_{>0}$. 
	The eigenvalues of $R_0$ are $e^{\pm i t \sqrt{p}}$, so by corollary~\ref{cor:identical-immersions}, $\sqrt{p}\in\Rr^*$ and $t_0=\frac{2\pi}{2\sqrt{p}}$, i.e. the wrapping number is $2\sqrt{p}$ and  $p\in\Rr_{>0}$. Finally, $q\in\Rr^*$ because $p+q\in\Rr$ and $q\neq 0$ because a catenoid is not flat.
	
	In $\Ee^3$, $p>0$ for the same reason as in $\Hh^3$: theorem~\ref{theorem:symmetries-E3} implies that $\det K_0<0$. 
	The eigenvalues of $R_0$ are $e^{\pm i t \sqrt{p}}$, so by corollary~\ref{cor:identical-immersions}, $\sqrt{p}\in\Rr^*$ and $t_0=\frac{2\pi}{2\sqrt{p}}$.
	Using $\det K_0<0$, we note that $K$ is holomorphically diagonalizable in a neighborhood of $\la=0$ and write $R = UDU\inv$ where $U$ is independent of $t$, $U_0\in\SU_2$, and
	\begin{equation*}
		D = \matrix{e^{it\mu}}{0}{0}{e^{-it\mu}},\quad \mu^2 = q\la + p.
	\end{equation*}
	Because $U$ is independent of $t$, by proposition~\ref{prop:rigid-motion-homomorphism-E3}, $R$ induces a $1$-parameter group of rotations if and only if $D$ induces a $1$-parameter group of rotations. Consider $\phi(D)$ and $\rho(D)$ given by proposition~\ref{prop:rigid-motion-homomorphism-E3}. The linear part of $\phi$ is a rotation whose axis consists of the diagonal elements of $\Ee^3$. Therefore, $\phi(D)$ is a rotation if and only if the affine part $\rho(D) + \rho(D)^*$ is off-diagonal. By an explicit computation, 
	\begin{equation*}
		\rho(D) = \frac{itq}{2\sqrt{p}}\matrix{1}{0}{0}{-1},
	\end{equation*} 
	so $q$ is real and $q\in\Rr^*$.
\end{proof}

\begin{theorem}\label{theorem:catenoids-example}
	Let $q\in\Rr^*$ and let $p>0$. Consider the LWR data $(\Cc^*, \xi, \Phi, 0, 1)$ with $\xi$ as in \eqref{eq:potential-catenoid} and
	\begin{equation*}
		\Phi(1) := \frac{1}{\sqrt{2}}\matrix{1}{\frac{1}{\mu}}{-\mu}{1},\quad \mu := \begin{cases}
			\sqrt{p}&\text{ in }\Ee^3,\\
			\sqrt{p+q}&\text{ in }\Hh^3
		\end{cases}
	\end{equation*}
	and assume that $p+q>0$ in the case of $\Hh^3$.
	Then the induced immersion is a catenoid with Hopf differential $qz^{-2}dz^2$, wrapping number $2\sqrt{p}$ and metric
	\begin{equation}\label{eq:metric-catenoid}
		ds^2 = \frac{q^2}{4\mu^2}\left(\mu\inv\abs{z}^{2\mu }+ \mu\abs{z}^{-2\mu}\right)^2\abs{z}^{-2}\abs{dz}^2.
	\end{equation}
	
	Moreover, any catenoid can be obtained this way, up to a rigid motion and a coordinate change.
\end{theorem}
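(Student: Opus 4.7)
The plan is to verify the forward direction by computing the Hopf differential, the metric, and the rotational symmetry directly from the explicit frame, then derive the converse from Proposition~\ref{prop:catenoid-potential}. Since $K$ is constant in $z$, the initial value problem solves on $\widetilde{\Cc^*}$ as $\Phi(z) = \Phi(1)\,z^K$. Writing $\xi = (A\la+B)\,dz$ with $A = \smatrix{0}{0}{q/z}{0}$ and taking $x = (0, i\sqrt{q/z})$ as a spinor for $A$, Remark~\ref{remark:hopf} yields $Qdz^2 = \det(x, Bx+x_z)\,dz^2 = qz^{-2}dz^2$ immediately, without ever evaluating $\Phi$.

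For the metric, I apply Theorem~\ref{theorem:fundamental-forms} with $y_{\la_\star} := \Phi_{\la_\star}x$, where $\la_\star := \la_0 = 0$ in the Euclidean case and $\la_\star := \la_1 = 1$ in the hyperbolic case. In both cases $K_{\la_\star}$ has eigenvalues $\pm\mu$, and the key observation is that the two columns of the prescribed $\Phi(1)$ are precisely eigenvectors of $K_{\la_\star}$, so $\Phi(1)^{-1}K_{\la_\star}\Phi(1) = \diag(-\mu,\mu)$. This makes $z^{K_{\la_\star}}$ explicit and produces, after a short matrix multiplication, $\Phi_{\la_\star}(z)\,x = \tfrac{i}{\sqrt 2}\sqrt{q/z}\,(z^\mu/\mu,\ z^{-\mu})^T$. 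Computing $\norm{y_{\la_\star}}^4\abs{dz}^2$ then yields \eqref{eq:metric-catenoid}.

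For the rotational symmetry and wrapping number, consider the lift $\tau_t(z) = e^{it}z$. Invariance $\tau_t^*(dz/z) = dz/z$ gives $\tau_t^*\xi = \xi$, hence $\tau_t^*\Phi = R(t)\Phi$ with $R(t)_\la = \Phi(1)\,e^{itK_\la}\Phi(1)^{-1}$. Using the diagonalization of $K_{\la_\star}$ by $\Phi(1)$, one computes $R(t)_{\la_\star} = \diag(e^{it\mu}, e^{-it\mu}) \in \SU_2$. Theorems~\ref{theorem:symmetries-E3} and~\ref{theorem:symmetries-H3} then yield an orientation-preserving isometry $\jJ_t$ with $\tau_t^*f = \jJ_t\circ f$, and the homomorphisms of Propositions~\ref{prop:rigid-motion-homomorphism-E3} and~\ref{prop:rigid-motion-homomorphism-H3} make $t\mapsto\jJ_t$ a one-parameter subgroup of $\Iso^+(\Xx^3)$. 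The smallest $t>0$ with $R(t)_{\la_0} = \pm\I$ is $t_0 = \pi/\sqrt p$, giving wrapping number $2\sqrt p$. In $\Hh^3$ the isometry is automatically a pure rotation, while in $\Ee^3$ one must additionally rule out a screw motion; this follows either by checking $(\la_1-\la_0)\dot R(t_0)_{\la_0}\in\su_2$ via a short first-order expansion of $e^{itK_\la}$ in $\la$ (as in Corollary~\ref{cor:identical-immersions}), or from the fact that any axial translation would be incompatible with $\jJ_{t_0} = \id$.

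For the converse, given an arbitrary catenoid $f$, Proposition~\ref{prop:catenoid-potential} provides an LWR frame $\widetilde\Phi$ inducing $f$ at $(0,1)$ whose potential, after an LWR gauge, has the form $\widetilde K z^{-1}dz$ with $\tilde q\in\Rr^*$ and $\tilde p>0$ determined respectively by the Hopf differential and wrapping number of $f$ (with $\tilde p+\tilde q>0$ automatic in the hyperbolic case). Running the theorem's construction with $(q,p) = (\tilde q, \tilde p)$ produces a catenoid $\tilde f$ sharing the same potential and evaluation points as $f$; the two LWR frames therefore differ by a $z$-independent dressing $C \in \SL_2\Cc$, and since both induced immersions have identical metrics and Hopf differentials, Corollary~\ref{cor:identical-immersions} forces $C$ to be of rigid-motion type. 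Any residual discrepancy arising from having singled out the base point $z=1$ is absorbed by a coordinate dilation on $\widetilde{\Cc^*}$. The main obstacle in the whole argument is the rotation-versus-screw-motion distinction in $\Ee^3$ noted above; everything else reduces to explicit linear algebra, greatly simplified by the deliberate choice of $\Phi(1)$ as the diagonalizing matrix of $K_{\la_\star}$.
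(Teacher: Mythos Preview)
Your forward direction is essentially the paper's: the explicit frame $\Phi=Cz^{K}$ with $C=\Phi(1)$ chosen to diagonalize $K_{\la_\star}$, the spinor computation for the metric, and the periodicity argument $R(t_0)=\pm\I$ forcing the one-parameter group of symmetries to be compact, hence rotational. One cosmetic point: from $\Phi(1)^{-1}K_{\la_\star}\Phi(1)=\diag(-\mu,\mu)$ you do not directly get $R(t)_{\la_\star}$ diagonal; what you actually use is the identity $Ce^{itK_{\la_\star}}C^{-1}=e^{it\,CK_{\la_\star}C^{-1}}$ together with $CK_{\la_\star}C^{-1}=\diag(\mu,-\mu)$. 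The conclusion $R(t)_{\la_\star}=\diag(e^{it\mu},e^{-it\mu})$ is correct either way.

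The converse, however, has a genuine gap. After Proposition~\ref{prop:catenoid-potential} you know that the frame of the given catenoid $f$ and the frame of the theorem's catenoid $\tilde f$ share the potential $Kz^{-1}dz$, hence differ by a holomorphic dressing $C_\la$. You then assert that $f$ and $\tilde f$ have \emph{identical metrics}; this is not justified and is in general false, since the metric depends on the frame through $y_{\la_\star}=\Phi_{\la_\star}x$. Consequently your appeal to Corollary~\ref{cor:identical-immersions} is misplaced: that corollary characterizes when two immersions are \emph{equal}, not merely isometric, and its hypothesis would already require what you are trying to prove. The paper closes this gap differently: it first applies a rigid motion to align the two rotation axes, which forces $C_{\la_\star}$ and $\tilde C_{\la_\star}$ to diagonalize $K_{\la_\star}$ simultaneously, so that $C_{\la_\star}=\diag(\rho,\rho^{-1})\tilde C_{\la_\star}$ for some $\rho\in\Cc^*$. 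Feeding this into the explicit spinor \eqref{eq:spinor-catenoid} shows the two metrics differ precisely by a dilation $z\mapsto cz$, after which Bonnet gives the rigid motion. Your final sentence about ``absorbing residual discrepancy by a coordinate dilation'' is the right idea, but it has to come \emph{before} any comparison of metrics, not after.
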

\begin{proof}
	In both cases, the LWR frame reads
	\begin{equation*}
		\Phi = Cz^K,\quad C:=\Phi(1).
	\end{equation*}
	For all $t\in\Rr$, let $\tau_t(z) = e^{it}z$. Then $\tau_t^*\Phi = R\Phi$ with
	\begin{equation*}
		R = C\exp(i t K)C\inv.
	\end{equation*} 
	
	In $\Ee^3$, the matrix $C$ is a diagonalizer of $K$ at $\la_0=0$, and thus a diagonalizer of $\exp(itK_0)$. Therefore $R_0\colon\Rr\to\SU_2$ is a $1$-parameter group. By theorem~\ref{theorem:symmetries-E3}, $\tau_t$ induces a $1$-parameter group of symmetries $\jJ_t$ on the immersion $f$. 
	Moreover, this $1$-parameter group is closed, with period $t_0=\pi/\sqrt{p}$. Therefore $\jJ$ is a $1$-parameter group of rotations with a common axis, and the induced immersion is a catenoid of wrapping number $2\sqrt{p}$.
	
	Similarly, in $\Hh^3$, the matrix $C$ is a diagonalizer of $K$ at $\la_1=1$, and thus a diagonalizer of $\exp(itK_1)$. Therefore $R_1\colon\Rr\to\SU_2$ is a $1$-parameter group. By theorem~\ref{theorem:symmetries-H3}, $\tau_t$ induces a $1$-parameter group of symmetries $\jJ_t$ on the immersion $f$. Moreover, this group is closed with period $\pi/\sqrt{p}$, so it is a $1$-parameter group of rotations with common axis. Therefore, the induced immersion is a catenoid of wrapping number $2\sqrt{p}$.
	
	The metric is given by theorem~\ref{theorem:fundamental-forms}, computing explicitly the spinors at $\la=0$ in $\Ee^3$ and $\la=1$ in $\Hh^3$:
	\begin{equation}\label{eq:spinor-catenoid}
		x = (0,i\sqrt{q}z^{-1/2}), \quad y = \frac{i\sqrt{q}}{\sqrt{2\mu}}(\mu^{-1/2}z^{-1/2 + \mu}, \mu^{1/2}z^{-1/2 -\mu}).\qedhere
	\end{equation}
	
	We now show that any catenoid can be obtained this way.
	Let $f\colon\widetilde{\Cc^*}\to\Xx^3$ be a catenoid. By theorem~\ref{theorem:immersions}, $f$ can be obtained locally via LWR at the evaluation points $(\la_0,\la_1)=(0,1)$. By proposition~\ref{prop:catenoid-potential}, up to a gauge, $\xi$ is as in \eqref{eq:potential-catenoid}. Note that the proof of proposition~\ref{prop:catenoid-potential} implies that $p+q>0$ if $\Xx^3 = \Hh^3$. 
	By proposition~\ref{prop:catenoid-potential} again, the Hopf differential of $f$ is $qz^{-2}dz^2$ and its wrapping number is $2\sqrt{p}$.
	Let $\tilde{f}$ be the catenoid given by the data of theorem~\ref{theorem:catenoids-example} with the same $q$ and $p$. Let $\Phi = Cz^K$ and $\tilde{\Phi} = \tilde{C}z^K$ be the frames for $f$ and $\tilde{f}$ respectively. Up to a rigid motion, assume that $f$ and $\tilde{f}$ share the same axis of symmetry. Then, at $\la_k$ ($k=0$ in $\Ee^3$, $k=1$ in $\Hh^3$),
	\begin{equation*}
		(CKC\inv)_ {\la_k} = (\tilde{C}K\tilde{C}\inv)_{\la_k} = \matrix{\mu}{0}{0}{-\mu}.
	\end{equation*}
	We deduce that there exists $\rho\in\Cc^*$ such that
	\begin{equation*}
		C_ {\la_k} = \matrix{\rho}{0}{0}{\rho\inv}\tilde{C}_{\la_k}.
	\end{equation*}
	One can then compute the spinor of $\Phi$ at $\la_k$:
	\begin{equation*}
		y_{\la_k} = \matrix{\rho}{0}{0}{\rho\inv}\tilde{y}_{\la_k}.
	\end{equation*}
	By \eqref{eq:spinor-catenoid}, the metrics $ds^2$ and $\widetilde{ds}^2$ agree up to a coordinate change. Moreover, the two catenoids have the same Hopf differential $qz^{-2}dz^2$. By Bonnet's Theorem, they agree up to a rigid motion. 
\end{proof}


\section{$n$-noids}\label{sec:noids}

In this section, we give a standard form for LWR potentials that induce $n$-noids. We then show how to construct generic trinoids by solving a period problem that amounts to the unitarization of a monodromy representation.

\begin{definition}
	A genus zero \textbf{$n$-noid} is a conformal, minimal or CMC $1$ immersion of the $n$-punctured sphere obtained from an LWR potential $\xi$ which is
	\begin{enumerate}
		\item of Fuchsian type: around each puncture $p_k$, there exists an LWR gauge $g_k$ such that $\xi\cdot g_k$ has a simple pole at $p_k$, and
		\item of catenoid type: the residue at $p_k$ of $\xi\cdot g_k$ has eigenvalues $\pm\sqrt{q_k\la+1/4}$. The parameter $q_k\in\Cc^*$ is the \textbf{weight} of the puncture $p_k$. 
	\end{enumerate}
\end{definition}

Our definition of a pole of catenoid type is justified by the following lemma.
\begin{lemma}\label{lemma:explicit-catenoids}
	Let $\xi = Kz\inv dz$ be an LWR potential on $\Cc^*$ where the residue $K$ is independent of $z$ and satisfies 
	\begin{equation*}
		\det K = -(q\la + p)
	\end{equation*}
	for some $q\in\Cc^*$ and $p\in\Cc$. Then there exists an LWR gauge $g$ such that
	\begin{equation*}
		\xi\cdot g = \matrix{0}{1}{q\la + p}{0}\frac{dz}{z}.
	\end{equation*}
\end{lemma}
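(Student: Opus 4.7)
The plan is to produce a constant (in $z$) gauge $g\in\SL_2\Cc$ that conjugates $K$ to the companion form $C:=\smatrix{0}{1}{q\la+p}{0}$. Since a $z$-independent gauge satisfies $g^{-1}dg=0$, one then has $\xi\cdot g = g^{-1}Kg\,z^{-1}dz$, so everything reduces to finding a $\la$-independent matrix $g$ with $g^{-1}Kg = C$.

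First I would exploit that $K=A\la+B$ is trace-free with $\det K = -(q\la+p)$, so Cayley-Hamilton gives
\begin{equation*}
K^2 = (q\la+p)\I.
\end{equation*}
Expanding and using $A^2=0$ (nilpotency), the coefficients of $\la^1$ and $\la^0$ yield
\begin{equation*}
AB + BA = q\I, \qquad B^2 = p\I.
\end{equation*}

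Next, let $x$ be a spinor for $A$, so that $A=xx^\perp$ and, by remark~\ref{remark:xperpx}, $Ax=0$. I would take the constant matrix
\begin{equation*}
g := c\begin{pmatrix} Bx & x\end{pmatrix}, \qquad c := \frac{1}{\sqrt{\det(Bx,x)}}.
\end{equation*}
To see that $c$ makes sense, suppose $Bx = \mu x$ for some scalar $\mu$. Then $ABx = \mu Ax = 0$, whereas applying $AB+BA = q\I$ to $x$ gives $ABx = qx$; since $q\neq 0$ and $x\neq 0$, this is a contradiction. Hence $\det(Bx,x)\neq 0$ and $g\in\SL_2\Cc$.

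Finally, I would verify $Kg = gC$ column by column, which is now forced by the two identities above:
\begin{align*}
Kg\,e_2 &= K(cx) = c(\la Ax + Bx) = cBx = g\,e_1, \\
Kg\,e_1 &= K(cBx) = c(\la ABx + B^2 x) = c(q\la+p)x = (q\la+p)\,g\,e_2.
\end{align*}
Therefore $g^{-1}Kg = C$ and $\xi\cdot g = Cz^{-1}dz$, as desired. The only real point to spot is the ansatz $g = c(Bx\mid x)$: once chosen, both the nondegeneracy of $\det(Bx,x)$ and the verification $Kg=gC$ are automatic consequences of Cayley-Hamilton and the spinor identity $Ax=0$, so no genuine obstacle remains.
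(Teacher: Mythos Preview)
Your proof is correct. The key observation that $x$ is a cyclic vector for $K$ (with $Kx=Bx$ and $K(Bx)=(q\la+p)x$) is exactly what is needed, and your use of Cayley--Hamilton together with $Ax=0$ to extract the identities $AB+BA=q\I$ and $B^2=p\I$ is clean. The nondegeneracy argument for $\det(Bx,x)$ is airtight; the one implicit fact, $x\neq 0$, follows because $A=0$ would force $\det K=\det B$ to be constant in $\la$, contradicting $q\neq 0$.

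Your route differs from the paper's. The paper proceeds by composing three explicit elementary gauges (upper-triangular, diagonal, lower-triangular) after first arranging $v\neq 0$, in the spirit of a step-by-step normalization. Your approach is a single conjugation by the companion-form change of basis $g=c\,(Bx\mid x)$. This buys you two things: it is coordinate-free (no case split on which spinor entry is nonzero, no square roots of $-q$ or $p$ along the way), and it makes transparent \emph{why} the companion form is attainable---namely because $\{x,Bx\}$ is a cyclic basis for the $\la$-pencil $K$. The paper's approach, by contrast, exhibits concrete gauge factors one could in principle track through later computations, but at the cost of more bookkeeping; in fact the intermediate form claimed for $\xi_1$ there is not valid for every admissible $B$, so your argument is also the more robust one.
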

\begin{proof}
	Write $K=A\la +B$ and let $x:=(u,v)$ be a spinor for $A$ ($x$ does not depend on $z$).
	Up to conjugation by
	\begin{equation*}
		g_0 = \matrix{0}{i}{i}{0},
	\end{equation*}
	one can assume that $v\neq 0$.
	Recall that $q\neq 0$ and compute:
	\begin{equation*}
		g_1 := \matrix{v\inv}{u}{0}{v},\quad \xi_1:=\xi\cdot g_1=\matrix{\sqrt{p}}{-q}{-\la}{-\sqrt{p}}\frac{dz}{z},
	\end{equation*}
	\begin{equation*}
		g_2:=\matrix{\sqrt{-q}}{0}{0}{\frac{1}{\sqrt{-q}}},\quad \xi_2 := \xi_1\cdot g_2 = \matrix{\sqrt{p}}{1}{ q\la}{-\sqrt{p}}\frac{dz}{z},
	\end{equation*}
	\begin{equation*}
		g_3:=\matrix{1}{0}{-\sqrt{p}}{1},\quad \xi_3:=\xi_2\cdot g_3 = \matrix{0}{1}{q\la +p}{0}\frac{dz}{z}.
	\end{equation*}
\end{proof}

\subsection{Potentials for $n$-noids}

\begin{theorem}\label{theorem:nnoid-potential}
	Any $n$-noid can be obtained with the LWR at $(\la_0,\la_1)=(0,1)$ from the potential
	\begin{equation}\label{eq:nnoid-potential}
		\xi = \matrix{0}{1}{Q\la + S}{0}dz
	\end{equation}
	where
	\begin{itemize}
		\item $Qdz^2$ is rational on $\CP^1$ with double poles at $z_1,\dots, z_n$ and no other poles. Its quadratic residues are $q_k$, the weight of $\xi$ at $z_k$. 
		\item $Sdz^2$ is rational on $\CP^1$ with double poles at the zeroes $u_k$ of $Q$ and no other poles. Its quadratic residues are
		$(n_k^2 -1)/4$ where $n_k = 1+\ord_{u_k}Q$.
	\end{itemize}
\end{theorem}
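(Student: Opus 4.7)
The plan is to start from an $n$-noid $f$, gauge its LWR potential into Schwarz form globally, and then read off the pole data of $Q$ and $S$ from the catenoid-type behavior at the punctures and the branching of the Gauss map at the umbilics. Since the weights $q_k$ are nonzero, the Hopf differential does not vanish, so $f$ is not flat. By Theorem~\ref{theorem:schwarz-potential} applied patch-by-patch, together with its uniqueness up to a sign, there is a globally defined Schwarz representative $\smatrix{0}{1}{Q\la+S}{0}dz$ for $\xi$ on $\Sigma = \CP^1\setminus\{z_1,\dots,z_n\}$. By Corollary~\ref{corrolary:schwarz-spinor} and Theorem~\ref{theorem:fundamental-forms}, $Q\,dz^2$ is the Hopf differential of $f$; by Corollary~\ref{corollary:gaussmap}, $S\,dz^2 = \sS[G]\,dz^2$, where $G$ is the (hyperbolic) Gauss map.

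Near a puncture $z_k$, the catenoid-type assumption together with the constant-gauge part of Lemma~\ref{lemma:explicit-catenoids} reduces $\xi$ locally to $\smatrix{0}{1}{q_k\la+1/4}{0}\zeta^{-1}d\zeta$ plus a holomorphic remainder, with $\zeta := z-z_k$. Reversing the gauge identity \eqref{eq:simple-pole-potential} (using $p_k = 1/4$) shows that the principal part of the Schwarz form at $z_k$ is $\smatrix{0}{1}{q_k\la/\zeta^2}{0}d\zeta$, which pins down $Q \sim q_k/\zeta^2$ and the vanishing of the $\zeta^{-2}$ coefficient of $S$. A Frobenius-type analysis of the ODE $\Phi_0' = \Phi_0 B$ at $z_k$, whose residue $\smatrix{0}{1}{1/4}{0}$ has eigenvalues $\pm 1/2$ (with $\zeta^{\pm 1/2}$ solutions and no logarithms, a feature of the catenoid-type residue structure) then shows that $G = u/v$ is a locally meromorphic M\"obius function of $\zeta$, so it extends unbranched through $z_k$ and $S\,dz^2 = \sS[G]\,dz^2$ is holomorphic at $z_k$. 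This no-log verification is the main technical step of the proof, and it uses crucially that the eigenvalues of the residue are dictated by the catenoid-type condition rather than being arbitrary.

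At an umbilic $u_k$, I pass to a Weierstrass representative via Proposition~\ref{prop:weierstrass-potential}; the identities $g = u/v$ and $q = uv_z - vu_z = -v^2 g_z$ (valid wherever $v \neq 0$, with the roles of $u, v$ swapped otherwise) give $\ord_{u_k}Q = \ord_{u_k} g_z = n_k - 1$, where $n_k$ is the local branching order of $G$. The direct computation $\sS[z^{n_k}] = (n_k^2-1)/(4z^2)$ combined with M\"obius invariance of $\sS$ under post-composition yields $\sS[G] = (n_k^2-1)/(4(z-u_k)^2) + O((z-u_k)^{-1})$, so $S\,dz^2$ has a double pole at $u_k$ with quadratic residue $(n_k^2-1)/4$ (nonzero since $n_k \geq 2$). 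Away from the $z_k$ and the $u_k$, $Q$ is holomorphic and $G$ is a local biholomorphism, so $S$ is holomorphic. Combining, $Q\,dz^2$ and $S\,dz^2$ extend to rational quadratic differentials on $\CP^1$ with exactly the prescribed poles, and $f$ is reproduced from the resulting Schwarz potential via LWR at $(\la_0,\la_1) = (0,1)$.
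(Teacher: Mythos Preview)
Your overall strategy matches the paper's: reduce to the Schwarz form, identify $Q$ with the Hopf differential and $S$ with $\sS[G]$, then read off the pole data at the punctures and the umbilics. The umbilic analysis is fine and essentially identical to the paper's.

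The gap is at the punctures. You assert that the Frobenius analysis of $\Phi_0' = \Phi_0 B$ near $z_k$ yields $\zeta^{\pm 1/2}$ solutions ``with no logarithms, a feature of the catenoid-type residue structure.'' This is not true: the catenoid-type hypothesis constrains only the \emph{residue} of $\xi\cdot g_k$, not the holomorphic remainder. When the residue eigenvalues differ by an integer (here $\tfrac12 - (-\tfrac12) = 1$), the higher-order terms can and generically do produce a logarithmic solution; equivalently, in the Schwarz form, once you have shown that the quadratic residue of $S$ at $z_k$ vanishes (which your gauge reversal does establish), $S$ may still have a \emph{simple} pole $r/\zeta$, and the scalar equation $y'' = (r/\zeta + \cdots)y$ has indicial roots $0,1$ with a logarithm precisely when $r\neq 0$.

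What actually kills the logarithm is the closing condition, which you never invoke. By definition an $n$-noid is an immersion of the $n$-punctured sphere, so Theorem~\ref{theorem:closing-conditions} forces the monodromy of $\Phi$ at $\la_0=0$ around each $z_k$ to be $\pm\I$. A logarithmic fundamental solution would make that monodromy a nontrivial unipotent (up to sign), contradicting $M_0(\gamma_k)=\pm\I$. Hence $r=0$ and $S$ is holomorphic at $z_k$. This is exactly the step the paper isolates; your proof needs to replace the ``residue structure'' justification with this monodromy argument.
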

\begin{proof}
	By theorem~\ref{theorem:schwarz-potential}, it suffices to compute the Hopf differential and the Schwarzian derivative of the Gauss map of a given $n$-noid $f\colon \Sigma\to\Xx^3$ where $\Sigma = \CP^1 \backslash\{z_1,\dots,z_n\}$.
	
	Away from the punctures $z_k$, the Hopf differential is a holomorphic quadratic differential and the Gauss map is a meromorphic function, so both $Q$ and $S$ are rational on $\CP^1$. 
	The Hopf differential is holomorphic, so has no pole in $\Sigma$. Assume that $G$ has a pole or zero of order $n\neq 0$ at some $u\in\Sigma$. Then a computation shows that its Schwarzian derivative has a double pole at $u$ with quadratic residue $(n^2-1)/4$. Moreover, for the surface to be immersed at $u$, $Q$ must have a zero at $u$ of order $n-1$ (this is in fact a sufficient condition). 
	
	Now we compute the behavior of $Q$ and $S$ at the punctures $z_k$. The potential $\xi$ is of Fuchsian type, so it is by definition gauge-equivalent to a potential $\eta$ of catenoid type at $z_k$ with residue 
	\begin{equation*}
		K = \matrix{0}{1}{\mu^2}{0},\quad \mu^2 = \la q_k + \frac{1}{4},\quad q_k\in\Cc^*.
	\end{equation*}
	With remark~\ref{remark:hopf} applied to $\eta$, one can compute that $Q$ admits a double pole at $z_k$ with quadratic residue $q_k$. By computing the gauge putting $\eta$ into its Schwarz form (using theorem~\ref{theorem:schwarz-potential}), one can show that $S$ has at most a simple pole at $z_k$ (its quadratic residue vanishes because $\mu_0^2 = 1/4$). But the $n$-noid $f$ closes around $z_k$, so by theorem~\ref{theorem:closing-conditions}, the monodromy of $\Phi_0$ around $z_k$ is $M_0=\pm\I$, and this happens only if the residue of $S$ vanishes. Therefore, $S$ is holomorphic at $z_k$.
\end{proof}

\subsection{Unitarizability on the three-punctured sphere}

\begin{definition}~
	\begin{itemize}
		\item A subset $\xX\subset\SL_2\Cc$ (resp. $\mathrm{sl}_2\Cc$) is \textbf{reducible} if there exists $\ell\in\CP^1$ which is an eigenline of $X$ for all $X\in\xX$.
		\item A subset $\xX\subset\SL_2\Cc$ (resp. $\mathrm{sl}_2\Cc$) is \textbf{unitarizable} if there exists $C\in\SL_2\Cc$ such that $CXC\inv \in\SU_2$ (resp. $\su_2$)	for all $X\in\xX$.
		\item $\nu\in\Cc$ is a \textbf{logarithmic eigenvalue} of $M\in\SL_2\Cc$ if $e^{2\pi i \nu}$ is an eigenvalue of $M$. The logarithmic eigenvalues of $M$ are defined up to $\nu\mapsto -\nu$ and $\nu\mapsto\nu+1$, and can be \textbf{normalized} uniquely so that $\Re\nu\in[0, 1/2]$.
	\end{itemize}
\end{definition}

\begin{theorem}
	Let $M_0, M_1, M_2\in\SL_2\Cc$ satisfy $M_0 M_1 M_2 = \I$. Let $\nu_0, \nu_1,\nu_2\in\Cc$ be corresponding logarithmic eigenvalues. Then
	\begin{itemize}
		\item $\{M_0, M_1, M_2\}$ is reducible if and only if $ \nu_0\pm \nu_1 \pm \nu_2 \in\Zz$ for some choice of signs.
		\item $\{M_0, M_1, M_2\}$ is irreducible and unitarizable if and only if $\nu_0, \nu_1, \nu_2\in\Rr$ and, when normalized to $[0,1/2]$, satisfy the spherical triangle inequalities:
		\begin{equation*}
			\nu_0<\nu_1 + \nu_2,\quad \nu_1<\nu_0 + \nu_2,\quad \nu_2<\nu_0 + \nu_1 ,\quad \nu_0 + \nu_1 + \nu_2<1.
		\end{equation*}
	\end{itemize}
\end{theorem}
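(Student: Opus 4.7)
The plan is to treat the two equivalences independently, since they use unrelated machinery.

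For the reducibility part, I would use the Fricke--Klein trace identity. Setting $t_j := \tr M_j = 2\cos(2\pi\nu_j)$ and using $t_2 = \tr(M_0M_1)^{-1} = \tr(M_0M_1)$, the two-generator subgroup $\langle M_0, M_1\rangle$ is reducible iff $\tr[M_0,M_1] = 2$, which Fricke's identity rewrites as
\begin{equation*}
	t_0^2 + t_1^2 + t_2^2 - t_0 t_1 t_2 - 4 = 0.
\end{equation*}
Substituting $t_j = 2\cos(2\pi\nu_j)$ and applying product-to-sum identities, this polynomial factors as
\begin{equation*}
	t_0^2 + t_1^2 + t_2^2 - t_0 t_1 t_2 - 4 = -16\prod_{\epsilon_1, \epsilon_2 \in \{\pm 1\}}\sin\bigl(\pi(\nu_0 + \epsilon_1\nu_1 + \epsilon_2\nu_2)\bigr),
\end{equation*}
which vanishes exactly when some $\nu_0 \pm \nu_1 \pm \nu_2 \in \Zz$. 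Alternatively, for the necessity direction one may simultaneously upper-triangularize $M_0, M_1$ (sharing an eigenline) and read off $\nu_2 \equiv \pm\nu_0 \pm \nu_1 \pmod\Zz$ directly from the diagonal entries of $(M_0M_1)^{-1}$.

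For the unitarizability criterion I would argue geometrically via $\SO(3) = \SU_2/\{\pm\I\}$. If the $M_j$ are unitarizable then each trace is real and lies in $[-2,2]$, immediately giving $\nu_j \in \Rr$. Under the adjoint action on $\su_2 \cong \Rr^3$, each $M_j$ becomes a rotation $R_j$ of angle $4\pi\nu_j$ around some axis $A_j \in S^2$, and irreducibility of $\{M_0, M_1, M_2\}$ is equivalent to the $R_j$ having no common fixed axis, i.e.\ to $A_0, A_1$ being linearly independent. Rodrigues' composition-of-rotations formula applied to $R_2^{-1} = R_0 R_1$ yields
\begin{equation*}
	\cos(2\pi\nu_2) = \cos(2\pi\nu_0)\cos(2\pi\nu_1) - \sin(2\pi\nu_0)\sin(2\pi\nu_1)\cos\theta,
\end{equation*}
where $\theta\in(0,\pi)$ is the spherical distance between $A_0$ and $A_1$. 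Applying the identity $\cos\alpha - \cos\beta = -2\sin\tfrac{\alpha+\beta}{2}\sin\tfrac{\alpha-\beta}{2}$ to each of $\cos\theta<1$ and $\cos\theta>-1$ translates these constraints into exactly the four spherical triangle inequalities. For the converse, given $\nu_j \in (0,1/2)$ satisfying the inequalities, I would solve Rodrigues for $\theta$, choose axes $A_0, A_1$ at spherical distance $\theta$, build $R_0, R_1\in\SO(3)$, set $R_2:=(R_0R_1)^{-1}$, and lift everything to $\SU_2$.

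The main obstacle is sign bookkeeping in both parts. In the reducibility step, the four sign choices in $\nu_0 \pm \nu_1 \pm \nu_2 \in \Zz$ must be matched with the four ways of putting $M_0, M_1$ simultaneously in upper-triangular form, and that correspondence has to be threaded through the sine factorization. In the unitarizability step, the two-to-one cover $\SU_2 \to \SO(3)$ means lifts of the $R_j$ are only defined up to a sign, the two lifts corresponding to $\nu_j$ and $\nu_j + \tfrac12$; the condition $\nu_0 + \nu_1 + \nu_2 < 1$ (as opposed to lying in $(1, \tfrac{3}{2})$) is precisely what selects the unique lift making the triple product $+\I$ while keeping every $\nu_j$ normalized to $[0,1/2]$. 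Once this sign is under control, the rest is classical spherical trigonometry.
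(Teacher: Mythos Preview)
Your reducibility argument coincides with the paper's: the paper sets $t_k=\tfrac12\tr M_k$, invokes Goldman for the criterion $\varphi:=1-t_0^2-t_1^2-t_2^2+2t_0t_1t_2=0$, and then factors $4e_0^2e_1^2e_2^2\varphi=\prod(e_0e_1e_2^{\pm1}-e_j^{\pm})$ with $e_k=e^{2\pi i\nu_k}$. Up to the normalization $t_k\leftrightarrow 2t_k$, your Fricke polynomial is $-4\varphi$ and your sine product is the same factorization written multiplicatively, so there is no substantive difference here.

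For unitarizability the paper takes a different and shorter route: it again cites Goldman for the criterion ``$t_k\in(-1,1)$ and $\varphi>0$'', observes that $\varphi=0$ cuts out the four faces of the tetrahedron in $[0,1/2]^3$, and uses a one-point continuity check to identify the interior with $\varphi>0$. Your $\SO(3)$/Rodrigues approach is more geometric and more self-contained, and the translation of $|\cos\theta|<1$ via product-to-sum into the four inequalities is correct. Each approach has its advantage: the paper's is a two-line reduction to the literature, yours actually explains where the spherical triangle shows up.

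There is, however, a genuine gap in your converse for unitarizability. Given $\nu_0,\nu_1,\nu_2$ satisfying the inequalities, you build rotations $R_0,R_1,R_2$ and lift them to some triple $N_0,N_1,N_2\in\SU_2$ with $N_0N_1N_2=\I$ and the correct eigenvalues. But the theorem asks you to unitarize the \emph{given} triple $M_0,M_1,M_2\in\SL_2\Cc$. To bridge this you must invoke that an irreducible representation of $F_2$ into $\SL_2\Cc$ is determined up to conjugacy by the three traces $(\tr M_0,\tr M_1,\tr M_0M_1)$ (Fricke--Vogt); only then does the existence of your unitary model $(N_j)$ force $(M_j)$ to be conjugate to it. This is standard but not automatic, and your proposal does not mention it. The paper sidesteps this entirely because Goldman's criterion is already stated for arbitrary $\SL_2\Cc$ triples.
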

\begin{proof}
	Let 
	\begin{equation*}
		t_k := \frac{1}{2}\tr M_k,\quad k=0,1,2,
	\end{equation*}
	and
	\begin{equation*}
		\varphi := 1-t_0^2-t_1^2-t_2^2 + 2t_0t_1t_2.
	\end{equation*}
	By \cite{goldman}, $\{M_0, M_1, M_2\}$ is reducible if and only if $\varphi=0$. Moreover, $\{M_0, M_1, M_2\}$ is irreducible and unitarizable if and only if $t_0, t_1, t_2\in(-1,1)$ and $\varphi > 0$. 
	Let $e_k := e^{2\pi i \nu_k}$ so that
	\begin{equation*}
		t_k = \frac{e_k + e_k\inv}{2},\quad k=0,1,2.
	\end{equation*}
	Then
	\begin{equation*}
		4e_0^2e_1^2e_2^2 \varphi = (e_0e_1e_2-1)(e_0e_1-e_2)(e_0e_2-e_1)(e_1e_2-e_0).
	\end{equation*}
	So $\varphi = 0$ if and only if $ \nu_0\pm \nu_1 \pm \nu_2 \in\Zz$ for some choice of signs. This proves the first point. 
	
	To prove the second point, note that $t_k\in(-1,1)$ if and only if $\nu_k\in\Rr$. Assume that $\nu_0, \nu_1, \nu_2$ are normalized. Then $\varphi=0$ if and only if $(\nu_0, \nu_1, \nu_2)$ lies on the boundary of the tetrahedron $\tT\subset [0,1/2]^3$ defined by
	\begin{equation*}
		(\nu_0 = \nu_1+\nu_2) \quad\text{or}\quad (\nu_1 = \nu_0+\nu_2)\quad \text{or}\quad (\nu_2 = \nu_0+\nu_1) \quad\text{or}\quad (\nu_0 + \nu_1+\nu_2 =1).
	\end{equation*} 
	Using the continuity of $\varphi$, one can check that $\varphi>0$ if and only if $(\nu_0, \nu_1, \nu_2)$ lies in the interior of $\tT$, that is, $\nu_0, \nu_1, \nu_2$ satisfy the spherical triangle inequalities.
\end{proof}

\begin{theorem}\label{theorem:unitarization-E3}
	Let $A_0, A_1, A_2\in\mathrm{sl}_2\Cc$ satisfy $A_0 + A_1 + A_2 = 0$. Let $ia_0, ia_1,ia_2\in\Cc$ be corresponding eigenvalues. Then
	\begin{itemize}
		\item $\{A_0, A_1, A_2\}$ is reducible if and only if $ a_0\pm a_1 \pm a_2 =0$ for some choice of signs.
		\item $\{A_0, A_1, A_2\}$ is irreducible and unitarizable if and only if $a_0, a_1, a_2\in \Rr$ and, when normalized to $\Rr_+$, satisfy the Euclidean triangle inequalities:
		\begin{equation*}
			{a_0}<{a_1} + {a_2},\quad {a_1}<{a_0} + {a_2},\quad {a_2}<{a_0} + {a_1}.
		\end{equation*}
	\end{itemize}
\end{theorem}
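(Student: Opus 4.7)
The plan is to mirror the proof of the preceding $\SL_2\Cc$ theorem, replacing the Fricke polynomial $\varphi = 1 - t_0^2 - t_1^2 - t_2^2 + 2t_0t_1t_2$ by its additive Heron-type analog
\begin{equation*}
    \varphi := 2a_0^2a_1^2 + 2a_1^2a_2^2 + 2a_2^2a_0^2 - a_0^4 - a_1^4 - a_2^4,
\end{equation*}
which factors as $\varphi = (a_0+a_1+a_2)(-a_0+a_1+a_2)(a_0-a_1+a_2)(a_0+a_1-a_2)$. Under this substitution, the common eigenline condition corresponds to $\varphi = 0$, and the spherical triangle inequalities become the Euclidean triangle inequalities through the factorization above.

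The first key computation will be to show $\det[A_0,A_1] = \varphi$. Since each $A_k\in\mathrm{sl}_2\Cc$ is traceless with eigenvalues $\pm ia_k$, Cayley-Hamilton gives $A_k^2 = -a_k^2\,\I$ and $\tr(A_k^2) = -2a_k^2$. Expanding $\tr(A_2^2)$ via $A_2 = -A_0 - A_1$ yields $\tr(A_0A_1) = a_0^2 + a_1^2 - a_2^2$. Applying Cayley-Hamilton to $D := A_0A_1$ (noting $\det D = a_0^2 a_1^2$) then gives $\tr(D^2) = \tr(D)^2 - 2\det D$, and the identity $\det[A_0,A_1] = \tr(A_0^2 A_1^2) - \tr(D^2) = 4a_0^2a_1^2 - (a_0^2+a_1^2-a_2^2)^2$ factors as $\varphi$.

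For the first bullet, observe that reducibility is equivalent to $A_0$ and $A_1$ sharing an eigenline (since $A_2 = -A_0 - A_1$ then automatically preserves it). In a joint upper-triangularization, $[A_0,A_1]$ is strictly upper triangular hence nilpotent, and conversely a traceless $2\times 2$ matrix with $\det[A_0,A_1] = 0$ has a kernel giving the shared eigenline. Thus reducibility is equivalent to $\varphi = 0$, which by the factorization is equivalent to $\pm a_0 \pm a_1 \pm a_2 = 0$ for some choice of signs.

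For the second bullet, note that $\su_2$-elements have determinant $\geq 0$, so eigenvalues $\pm ia_k$ with $a_k\in\Rr$ is necessary; we normalize $a_k\geq 0$. For the converse, put $A_0$ into diagonal form $\diag(ia_0,-ia_0)$ and use diagonal conjugation to reduce $A_1$ to the unique form determined by the fixed invariants $x := (A_1)_{11} = (a_0^2+a_1^2-a_2^2)/(2ia_0)$ (purely imaginary when $a_k\in\Rr$) and $yz = -\varphi/(4a_0^2)$. Solving $\bar z = -y$ with $yz = -|y|^2$ fixed requires $|y|^2 = \varphi/(4a_0^2)$, which is solvable exactly when $\varphi > 0$, placing the triple in $\su_2$. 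Finally, $\varphi > 0$ combined with $a_0+a_1+a_2 > 0$ forces all three remaining factors of $\varphi$ to be positive (if one were negative the other two would still be positive, making $\varphi < 0$), yielding the strict Euclidean triangle inequalities.

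The main obstacle will be cleanly handling the degenerate cases $a_0 = 0$ in the unitarization step: when $A_0$ is not diagonalizable, the normal form argument breaks down, but one checks that $a_0 = 0$ with irreducibility forces $\varphi = -(a_1^2-a_2^2)^2 \leq 0$, so such cases are ruled out by $\varphi > 0$ and can be disposed of separately. An alternative route avoiding case distinction is to use the isomorphism $\su_2 \cong (\Rr^3,\times)$ under which triples summing to zero become closed triangles, realizable (uniquely up to $\SO(3)$-rotation) precisely when the side lengths satisfy the triangle inequalities, combined with the fact that in the irreducible case the $\SL_2\Cc$-conjugacy class of a zero-sum triple is determined by $(a_0,a_1,a_2)$.
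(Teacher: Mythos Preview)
Your proposal is correct and reaches the same invariant $\varphi = \det[A_0,A_1]$ as the paper, but the unitarization step takes a genuinely different route. The paper works with the Gram matrix
\[
T = \begin{pmatrix}\scal{A_1,A_1} & \scal{A_1,A_2} & 0\\ \scal{A_1,A_2} & \scal{A_2,A_2} & 0\\ 0 & 0 & \varphi/4\end{pmatrix},
\]
shows that irreducible unitarizability is equivalent to $T$ being real symmetric positive definite, and produces the unitarizer from a Cholesky factor of $T$ lifted through the double cover $\SL_2\Cc\to\SO_3\Cc$. You instead diagonalize $A_0$ and use the residual diagonal conjugation to normalize $A_1$, reducing the question to whether $|y|^2 = \varphi/(4a_0^2)$ is solvable. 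Your argument is more elementary (no Cholesky, no double cover) and yields an explicit unitarizer; the paper's argument is symmetric in the indices and sidesteps the degenerate case $a_0=0$ that you have to dispose of separately. Your computation of $\varphi$ via Cayley--Hamilton and trace identities is also different from the paper's, which uses the bilinear form $\scal{\cdot,\cdot}$ and the polarization identity, but both arrive at the same Heron factorization.

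One small point to tighten: your claim that the kernel of a nonzero nilpotent $[A_0,A_1]$ is automatically a shared eigenline deserves a line of justification. The paper handles this by writing $[A_1,A_2]=xx^\perp$ and observing $0=\scal{A_k,[A_1,A_2]}=\det(A_kx,x)$; equivalently, in the basis where $[A_0,A_1]=\bigl(\begin{smallmatrix}0&1\\0&0\end{smallmatrix}\bigr)$, the identity $\tr(A_k[A_0,A_1])=0$ forces the lower-left entry of each $A_k$ to vanish.
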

\begin{proof}
	Let 
	\begin{equation*}
		\varphi := \det [A_1, A_2].
	\end{equation*}
	We first show that $\{A_0, A_1, A_2\}$ is reducible if and only if $\varphi=0$. 
	If $\{A_0, A_1, A_2\}$ is reducible, then they share an eigenline and this eigenline is in the kernel of the commutator $[A_1, A_2]$, which implies that $\varphi = 0$. Conversely, if $\varphi=0$, then the commutator $[A_1, A_2]$ is nilpotent and can therefore be written $[A_1,A_2] = xx^\perp$ for some $x\in\Cc^2$. If $x=0$, then $A_1$ and $A_2$ commute, so they share an eigenline and $\{A_0, A_1, A_2\}$ is reducible. Assume that $x\neq 0$. Considering the bilinear extension of equations~\eqref{eq:lorentzian-inner-product} and \eqref{eq:cross-product-E3}, one has
	\begin{equation*}
		0 = \scal{A_1, [A_1, A_2]} =\scal{A_1, xx^\perp} = \det(A_1x, x).
	\end{equation*}
	Therefore, $A_1x\in \scal{x}$ and $x$ is an eigenvector of $A_1$. Similarly, $x$ is an eigenvector of $A_2$. Therefore $\{A_0, A_1, A_2\}$ is reducible. We have proved that $\{A_0, A_1, A_2\}$ is reducible if and only if $\varphi=0$.
	
	Now a computation gives
	\begin{equation*}
		\varphi = 4(\scal{A_1, A_1}\scal{A_2, A_2} - \scal{A_1, A_2}^2).
	\end{equation*}
	By the polarization identity, and using that $A_1+A_2 = -A_0$,
	\begin{equation*}
		\varphi = 4\norm{A_1}^2\norm{A_2}^2 - (\norm{A_0}^2 - \norm{A_1}^2 - \norm{A_2}^2)^2,
	\end{equation*}
	where $\norm{A_k}^2 = \scal{A_k, A_k} = -\det A_k$. But $\det A_k = a_k^2$, so
	\begin{equation*}
		\varphi = (a_0+a_1+a_2)(a_0+a_1-a_2)(a_0-a_1+a_2)(-a_0+a_1+a_2).
	\end{equation*}
	Hence, $\varphi = 0$ if and only if $ a_0\pm a_1 \pm a_2 =0$ for some choice of signs, and the first point is proved.
	
	To prove the second point, we show that $\{A_0, A_1, A_2\}$ is irreducible and unitarizable if and only if $a_0, a_1, a_2\in \Rr$ and $\varphi > 0$. Let
	\begin{equation*}
		T := \begin{pmatrix}
			\scal{A_1, A_1} & \scal{A_1, A_2} & 0 \\
			\scal{A_1, A_2} & \scal{A_2, A_2} & 0 \\
			0 & 0 & \scal{A_1, A_1}\scal{A_2, A_2} - \scal{A_1, A_2}^2
		\end{pmatrix}.
	\end{equation*}
	If $\{A_0, A_1, A_2\}$ is irreducible and unitarizable, then $a_0, a_1, a_2\in \Rr$ and $T$ is real symmetric positive definite, so $\varphi>0$. Conversely, assume that $a_0, a_1, a_2\in \Rr$ and that $\varphi>0$. Then $T$ is real symmetric positive definite. Let $T = U^T U$ be the Cholesky decomposition of $T$, with $U$ real upper triangular. Let $S\in\SO_3\Cc$ such that $U=SV$ where $V=(A_1, A_2, A_1\times A_2)$. 
	Let $C\in\SL_2\Cc$ be a lift of $S$ given by the double covering $\SL_2\Cc\to\SO_3\Cc$ (see exercise 7.17 in \cite{fulton2013representation}).
	Then $C$ is a unitarizer of $\{A_0, A_1, A_2\}$. Therefore, $\{A_0, A_1, A_2\}$ is irreducible and unitarizable if and only if $a_0, a_1, a_2\in \Rr$ and $\varphi > 0$, which is equivalent to  $a_0, a_1, a_2\in \Rr$ and $\abs{a_0}, \abs{a_1}, \abs{a_2}$ satisfying the Euclidean triangle inequalities.
\end{proof}

\subsection{Trinoids}

References for the classification of trinoids in $\Ee^3$ are \cite{katoumeharayamada2000}, \cite{lopez1992classification} and in $\Hh^3$,  \cite{umehara2000metrics},\cite{bobenko-springborn-spinors}. Here, we outline the strategy to obtain closed trinoids by finding a unitarizer of a monodromy representation on the $3$-punctured sphere.

\begin{corollary}\label{corollary:trinoid-potential}
	Any trinoid can be obtained from the LWR with the data
	\begin{equation*}
		\Sigma = \CP^1\backslash\{0,1,\infty\},\quad \xi = \matrix{0}{1}{Q\la + S}{0}dz,\quad (\la_0, \la_1) = (0,1)
	\end{equation*}
	where
	\begin{equation}\label{eq:qs-trinoid}
		Q = \frac{q_0 + (q_1-q_0-q_2)z + q_2z^2}{z^2 (z-1)^2},
		\quad S = \frac{\frac{3}{4}(u_1-u_0)^2}{(z-u_0)^2 (z-u_1)^2},
	\end{equation}
	$q_0,q_1,q_2\in\Cc$ and $u_1\neq u_2\in\Cc$ are the zeroes of $Q$.
\end{corollary}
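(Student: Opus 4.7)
The plan is to specialize Theorem~\ref{theorem:nnoid-potential} to $n=3$ and then exploit the Möbius automorphisms of $\CP^1$ to normalize the three punctures. By Theorem~\ref{theorem:nnoid-potential}, any trinoid is induced at $(\la_0,\la_1)=(0,1)$ by an LWR potential $\xi=\smatrix{0}{1}{Q\la+S}{0}dz$ on a three-punctured $\CP^1$, with $Qdz^2$ and $Sdz^2$ having the prescribed pole/residue structures. A Möbius change of coordinate on the base (which does not affect the isometry type of the induced surface) allows us to place the three punctures at $0$, $1$, and $\infty$. It remains to determine $Q$ and $S$ explicitly under these normalizations.

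For $Q$: the rational functions on $\CP^1$ such that $Qdz^2$ has at most double poles at $\{0,1,\infty\}$ and no other poles form a three-dimensional vector space, parametrized uniquely by the three quadratic residues $q_0, q_1, q_2$. A direct check verifies that the function in~\eqref{eq:qs-trinoid} has $Q\sim q_0/z^2$ near $0$, $Q\sim q_1/(z-1)^2$ near $1$, and $Q\sim q_2/z^2$ as $z\to\infty$, the last of which translates in the chart $w=1/z$ into quadratic residue $q_2$ at $\infty$. This pins down the formula. The numerator is quadratic in $z$, so $Q$ has two zeros $u_0,u_1$, distinct in the generic case.

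For $S$: by Theorem~\ref{theorem:nnoid-potential}, $S$ is rational with double poles only at $u_0$ and $u_1$, of quadratic residue $(n_k^2-1)/4 = 3/4$ at each (since $Q$ has simple zeros there, $n_k=2$), and $Sdz^2$ has no pole at infinity. Writing the partial-fraction ansatz
\begin{equation*}
	S = \frac{3/4}{(z-u_0)^2} + \frac{B}{z-u_0} + \frac{3/4}{(z-u_1)^2} + \frac{D}{z-u_1}
\end{equation*}
and imposing $S=O(z^{-4})$ as $z\to\infty$ (equivalent to regularity of $Sdz^2$ at $\infty$), the $1/z$ and $1/z^2$ coefficients give $B+D=0$ and $3/2+B(u_0-u_1)=0$, while the $1/z^3$ coefficient $\tfrac{3}{2}(u_0+u_1)+B(u_0^2-u_1^2)$ vanishes automatically once the first two conditions hold. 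Solving yields $B=3/(2(u_1-u_0))$, $D=-B$, after which clearing denominators produces a numerator in which the cross terms cancel, giving $S=\tfrac{(3/4)(u_1-u_0)^2}{(z-u_0)^2(z-u_1)^2}$, as in~\eqref{eq:qs-trinoid}. The only step requiring mild care is this last algebraic simplification; everything else is a direct specialization of Theorem~\ref{theorem:nnoid-potential}.
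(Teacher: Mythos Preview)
Your approach is essentially the same as the paper's: specialize Theorem~\ref{theorem:nnoid-potential} to $n=3$ and then read off the explicit forms of $Q$ and $S$. Your explicit verifications for $Q$ and the partial-fraction derivation of $S$ are correct and more detailed than the paper's terse ``hence the form of $S$.'' You also make explicit the M\"obius normalization to $\{0,1,\infty\}$, which the paper leaves implicit.

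There is, however, one genuine gap. You write that the two zeros of $Q$ are ``distinct in the generic case'' and then proceed as if they are always distinct. But the corollary asserts $u_0\neq u_1$ for \emph{every} trinoid, so this must be proved. The paper supplies the missing argument via a degree count: a nonzero meromorphic quadratic differential on $\CP^1$ has total order $-4$. By Theorem~\ref{theorem:nnoid-potential}, $Sdz^2$ has double poles exactly at the zeros of $Q$, with nonzero quadratic residue $(n_k^2-1)/4\geq 3/4$ there, and no other poles. If $Q$ had a single zero of order $2$, then $Sdz^2$ would have a single double pole and no others, contributing total pole order $-2$; since zeros contribute nonnegatively, the total order could never reach $-4$, a contradiction. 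Hence $Q$ must have two distinct simple zeros, each giving $n_k=2$ and quadratic residue $3/4$. Once you insert this step, your proof is complete.
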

\begin{proof}
	The function $Q$ is as in \eqref{eq:qs-trinoid} because of theorem~\ref{theorem:nnoid-potential}. 
	The total order of the quadratic differential $Sdz^2$ is $-4$, and by theorem~\ref{theorem:nnoid-potential}, $S$ is rational on $\CP^1$ with double poles at the zeroes of $Q$ and no other poles. Therefore, $Q$ admits two distinct zeroes $u_1,u_2\in\Cc$, each of order $1$. By theorem~\ref{theorem:nnoid-potential} again, the quadratic residue of $Sdz^2$ at $u_k$ is $\frac{3}{4}$, hence the form of $S$ in \eqref{eq:qs-trinoid}.
\end{proof}

\begin{proposition}\label{prop:eigenvalues-trinoid}
	Let $\Phi$ be an LWR frame with LWR potential $\xi$ as in corollary~\ref{corollary:trinoid-potential} and let $(M_0, M_1, M_2)$ be the monodromy of $\Phi$ around some loop enclosing $(0, 1, \infty)$ with index $1$.
	\begin{enumerate}
		\item The eigenvalues of $M_k$ at $\la_1=1$ are $e^{\pm 2\pi i \nu_k}$ where
		\begin{equation}\label{eq:nu-k}
			\nu_k = \frac{1}{2} - \sqrt{q_k+1/4}.
		\end{equation}
		\item The eigenvalues of $\dot{M}_k$ at $\la_0=0$ are $\pm\dot{\nu}_k$ where
		\begin{equation}\label{eq:dot-nu-k}
			\dot{\nu}_k = 2\pi i q_k.
		\end{equation}
	\end{enumerate}
\end{proposition}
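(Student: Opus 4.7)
The plan is to compute each monodromy $M_k$ locally near its puncture by gauging the Schwarz-form potential into a Fuchsian (catenoid-type) form, where the monodromy can be written explicitly via $\exp(2\pi i K)$, and then to account for the extra $-I$ that the multi-valued gauge itself contributes. For the puncture at $\infty$ I would pass to the coordinate $w = 1/z$, under which $Q\,dz^2$ acquires a double pole at $w = 0$ with the same quadratic residue $q_2$ while $S\,dz^2$ remains holomorphic, so the analysis is identical in all three cases.

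Localizing, the Schwarz potential near $z_k$ reads $\xi = \smatrix{0}{1}{Q\la+S}{0}dz$ where, by theorem~\ref{theorem:nnoid-potential}, $Q$ has a double pole with quadratic residue $q_k$ and $S$ is holomorphic at $z_k$. Applying the gauge $g(z) = \smatrix{\sqrt z}{0}{-1/(2\sqrt z)}{1/\sqrt z}$ from the proof of proposition~\ref{prop:catenoid-potential}, with $s=0$ and therefore $p=1/4$, transforms $\xi$ into the catenoid form $\widetilde\xi = Kz^{-1}dz$ with $K = \smatrix{0}{1}{q_k\la + 1/4}{0}$, so that $\det K = -(q_k\la + 1/4)$. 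The gauged frame is $\widetilde\Phi = C z^K$ for some $\la$-holomorphic $C\in\SL_2\Cc$, whose monodromy $\widetilde M_k = C\exp(2\pi i K)C\inv$ has eigenvalues $e^{\pm 2\pi i \sqrt{q_k\la + 1/4}}$. Since $g(e^{2\pi i}z) = -g(z)$, the monodromy of the original frame satisfies $M_k = -\widetilde M_k$, and therefore has eigenvalues $-e^{\pm 2\pi i \sqrt{q_k\la + 1/4}} = e^{2\pi i(1/2\pm\sqrt{q_k\la + 1/4})}$. Using the identity $e^{2\pi i(1/2+\mu)} = e^{-2\pi i(1/2-\mu)}$ to regroup signs, the eigenvalues at $\la = 1$ become $e^{\pm 2\pi i \nu_k}$ for $\nu_k = 1/2 - \sqrt{q_k+1/4}$, proving part (1).

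For part (2), at $\la = 0$ the matrix $K_0 = \smatrix{0}{1}{1/4}{0}$ has distinct eigenvalues $\pm 1/2$, hence $\exp(2\pi i K_0) = -\I$ and $M_k(0) = -\widetilde M_k(0) = \I$, consistent with the trinoid being closed. Expanding $\sqrt{q_k\la + 1/4} = 1/2 + q_k\la + O(\la^2)$ gives $-e^{\pm 2\pi i\sqrt{q_k\la + 1/4}} = 1 \pm 2\pi i q_k\la + O(\la^2)$; matching this Taylor expansion against $M_k(\la) = \I + \la\dot M_k(0) + O(\la^2)$ (whose eigenvalues are $1 + \la e_\pm + O(\la^2)$ with $e_\pm$ the eigenvalues of $\dot M_k(0)$) yields $e_\pm = \pm 2\pi i q_k = \pm\dot\nu_k$. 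The main technical subtlety is the sign $M_k = -\widetilde M_k$ introduced by the multi-valued gauge $g$; once it is verified, the rest is direct bookkeeping with the explicit Fuchsian residue $K$.
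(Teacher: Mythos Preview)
Your overall strategy matches the paper's: gauge to Fuchsian form near each puncture, read off the monodromy eigenvalues from the residue $K$, and correct by the $-\I$ contributed by the multivalued gauge. There is, however, a concrete slip. The gauge $g(z)=\smatrix{\sqrt z}{0}{-1/(2\sqrt z)}{1/\sqrt z}$ applied to the trinoid Schwarz potential does \emph{not} produce exactly $Kz^{-1}dz$: since $Q$ has lower-order terms beyond $q_k/z^2$ and $S$ is holomorphic but not identically zero, a direct computation gives
\[
\xi\cdot g \;=\; \matrix{0}{1}{(Q\la+S)z^2+\tfrac14}{0}\frac{dz}{z} \;=\; Kz^{-1}dz + (\text{holomorphic in }z).
\]
Hence your formula $\widetilde\Phi = Cz^K$ is false as written, and the identity $\widetilde M_k = C\exp(2\pi i K)C^{-1}$ does not follow from it.

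The repair is the standard local theory of regular singular points: for a first-order system with a simple pole of residue $K$ plus holomorphic terms, the local monodromy is conjugate to $\exp(2\pi i K)$ whenever no two eigenvalues of $K$ differ by a nonzero integer. This yields the eigenvalue formula at $\la=1$. At $\la=0$ the eigenvalues $\pm\tfrac12$ of $K$ are resonant, so the abstract theory only gives that both eigenvalues of $\widetilde M_k(0)$ equal $-1$; that $M_k(0)$ is actually $\pm\I$ (no Jordan block) comes from the closing condition already built into the trinoid hypothesis. The paper proceeds in exactly this way, asserting that the monodromy has the eigenvalues of $-\exp(2\pi iK)$ without writing a closed form for the gauged frame. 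With this patched, your first-order expansion of the eigenvalue function in part~(2) is equivalent to the paper's argument via holomorphic diagonalization of $K$ near $\la=0$.
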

\begin{proof}
	First note that these eigenvalues do not depend upon the choice of an initial condition for $\Phi$, nor do they depend on the chosen loop.
	Around each singularity, there exists a gauge $g$ such that the potential $\xi\cdot g$ is a Fuchsian potential whose residue $K$ has eigenvalues $\pm\mu$ where
	\begin{equation*}
		\mu^2 = \la q_k + \frac{1}{4}.
	\end{equation*}
	Noting that the gauge $g$ itself has monodromy $-\I$, we deduce that the monodromy of $\Phi$ has the same eigenvalues as $-\exp(2\pi i K)$. This proves \eqref{eq:nu-k}.
	To compute the eigenvalues of $\dot{M}_k$, note that $K$ is diagonalizable in a neighborhood of $\la_0=0$. 
	Therefore, with
	\begin{equation*}
		N_k := \matrix{e^{2\pi i \mu_k}}{0}{0}{e^{-2\pi i \mu_k}},\quad \mu_k^2 = \la q_k + \frac{1}{4},
	\end{equation*}
	there exists $C=C_\la$ holomorphic for $\la$ in a neighborhood of $\la_0=0$ such that
	\begin{equation*}
		M_k = CN_kC\inv.
	\end{equation*}
	Using that $M_k=-\I$ at $\la_0=0$, we get
	\begin{equation*}
		(\dot{M}_k)_{\la_0} = C_{\la_0}(\dot{N}_k)_{\la_0}C_{\la_0}\inv
	\end{equation*}
	and that the eigenvalues of $\dot{M}_k$ and the eigenvalues of $\dot{N}_k$ are the same at $\la_0=0$. This implies \eqref{eq:dot-nu-k}.
\end{proof}

\begin{definition}\label{definition:irreducible-trinoid}
	An \textbf{irreducible trinoid} is a trinoid induced by an LWR frame $\Phi$ whose monodromy matrices $M_0, M_1, M_2$ satisfy the following condition:
	\begin{itemize}
		\item for minimal trinoids in $\Ee^3$: $\{\dot{M}_0, \dot{M}_1, \dot{M}_2\}_{\la_0}$ is irreducible,
		\item for CMC 1 trinoids in $\Hh^3$: $\{M_0, M_1, M_2\}_{\la_1}$ is irreducible.
	\end{itemize}
\end{definition}

\begin{theorem}
	Irreducible trinoids are parametrized by their weights.
\end{theorem}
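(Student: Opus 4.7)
Plan: The approach is to track what data in Corollary~\ref{corollary:trinoid-potential} remains free once the weights are fixed. The numerator of $Q$ in~\eqref{eq:qs-trinoid} is $q_0+(q_1-q_0-q_2)z+q_2z^2$, whose roots are precisely the parameters $u_0,u_1$ entering $S$; by Vieta's formulas these roots depend only on $(q_0,q_1,q_2)$, so the entire potential $\xi$ is a function of the weights alone. The only remaining freedom in an LWR frame $\Phi$ for $\xi$ is its initial condition, i.e.\ a holomorphic dressing. The strategy is (i) to verify the closing conditions of Theorem~\ref{theorem:closing-conditions}, (ii) to fix the initial condition by unitarization, and (iii) to deduce uniqueness of the trinoid modulo rigid motions.

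For the closing condition $M_k|_{\la_0}\in\{\pm\I\}$: at $\la_0=0$, $\xi$ reduces to $\smatrix{0}{1}{S}{0}dz$, equivalent to the scalar ODE $y''=Sy$. Since $S$ is holomorphic at $0,1,\infty$, the local monodromy of $y''=Sy$ around each of the three punctures is trivial; the $\sqrt{\cdot}$ gauge used in the proof of Proposition~\ref{prop:catenoid-potential} then contributes a factor $-\I$, yielding $M_k|_{\la_0}=-\I$. I also need the singularities at $u_0,u_1$ to be apparent, so that the monodromy is central along every loop on $\Sigma$ and not merely around $0,1,\infty$. This is where the precise global form of $S$ in~\eqref{eq:qs-trinoid} matters, and this step is the main technical obstacle.

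For the unitarization condition, Proposition~\ref{prop:eigenvalues-trinoid} expresses the spectral data of the monodromy in terms of the weights: the eigenvalues of $M_k|_{\la=1}$ are $e^{\pm 2\pi i\nu_k}$ with $\nu_k=1/2-\sqrt{q_k+1/4}$, and the eigenvalues of $\dot M_k|_{\la=0}$ are $\pm 2\pi i q_k$. Under the irreducibility assumption of Definition~\ref{definition:irreducible-trinoid} and the spherical (respectively Euclidean) triangle inequalities on these eigenvalues, Theorem~\ref{theorem:unitarization-E3} and its $\SU_2$ counterpart produce a simultaneous unitarizer $R\in\SL_2\Cc$, unique up to $\SU_2$-conjugation. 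Holomorphic dressing of $\Phi$ by $R$ fulfills the second half of Theorem~\ref{theorem:closing-conditions}, so the induced immersion descends to a closed irreducible trinoid with the prescribed weights.

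For uniqueness, two irreducible trinoids sharing their weights share the same $\xi$, so their LWR frames differ by a holomorphic dressing; both being closed forces this dressing to land in the rigid-motion subgroup identified by Corollary~\ref{cor:identical-immersions}, and the trinoids differ by an ambient isometry. The hard part remains the apparent-singularity check at $u_0,u_1$: since the indicial roots $-1/2$ and $3/2$ differ by an integer, the Frobenius method ordinarily forces a log term, and eliminating it relies on the precise Laurent coefficients of $S$ in~\eqref{eq:qs-trinoid}, not merely on its quadratic residues. The rest — eigenvalue bookkeeping from Proposition~\ref{prop:eigenvalues-trinoid}, unitarization via Theorem~\ref{theorem:unitarization-E3}, and uniqueness via Corollary~\ref{cor:identical-immersions} — is essentially assembly of ingredients already developed earlier in the paper.
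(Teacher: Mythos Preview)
Your approach is essentially the paper's: both arguments set up the weight map from trinoids to triples $(q_0,q_1,q_2)$, observe that the Schwarz potential in Corollary~\ref{corollary:trinoid-potential} is fully determined by the weights, use Proposition~\ref{prop:eigenvalues-trinoid} together with the unitarization theorems to pin down the initial condition, and deduce uniqueness from the fact that two frames for the same potential differ by a holomorphic dressing. The paper organizes this explicitly as ``$\wW\colon\tT\to\qQ$ is a bijection'', identifying the target $\qQ$ as the weight triples satisfying the (Euclidean, respectively spherical) triangle inequalities; you have all the same ingredients but leave the precise description of the realizable weight set slightly implicit.

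Two remarks. First, your concern about the apparent singularities at $u_0,u_1$ is legitimate and is indeed not spelled out in the paper's proof of this theorem either; it is hidden in the derivation of the potential in Theorem~\ref{theorem:nnoid-potential}, where the quadratic residues $(n_k^2-1)/4$ are exactly what is needed to kill the logarithmic term and make the immersion regular at the umbilics. Second, in your uniqueness step, ``both being closed forces this dressing to land in the rigid-motion subgroup'' is not quite enough as stated: closedness gives you two unitary (resp.\ $\su_2$-valued) monodromy triples conjugated by $C_{\la_0}$, but it is the \emph{irreducibility} hypothesis that forces the conjugator itself to lie in $\SU_2$. The paper uses this implicitly when it writes ``We deduce that $C_{\la_0}\in\SU_2$''.
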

\begin{proof}
	We start with Euclidean space $\Ee^3$. Let $\tT$ be the set of irreducible minimal trinoids (modulo rigid motions) and let $\qQ$ be the set of triples $(q_0, q_1, q_2)\in\Rr^3$ whose absolute values satisfy the Euclidean triangle inequalities. 
	Let $\wW\colon\tT\to\qQ$ be the map that associates to a trinoid its weights. We show that $\wW$ is a bijection.
	
	The map $\wW$ is well-defined: by corollary~\ref{corollary:trinoid-potential}, the weights of a given trinoid are determined by its Hopf differential, which is invariant under rigid motions.
	Moreover, we need to show that $\wW(\tT)\subset\qQ$: let $f=\LWR(\Sigma, \xi, \Phi, \la_0, \la_1)$ be a trinoid given by corollary~\ref{corollary:trinoid-potential} and let $M_0, M_1, M_2$ be the monodromies of $\Phi$ around the poles $0$, $1$ and $\infty$ respectively. By definition~\ref{definition:irreducible-trinoid} and by theorem~\ref{theorem:closing-conditions}, $\{\dot{M}_0, \dot{M}_1, \dot{M}_2\}_{\la_0}$ is irreducible and unitary. By proposition~\ref{prop:eigenvalues-trinoid}, the eigenvalues of $(\dot{M}_k)_{\la_0}$ are $\pm2\pi i q_k$. By theorem~\ref{theorem:unitarization-E3}, $(q_0, q_1, q_2)\in\qQ$. Note that the Hopf differential $Qdz^2$ given by corollary~\ref{corollary:trinoid-potential} has two distinct zeroes if and only if $\delta(q_1,q_2,q_3)\neq 0$ where
	\begin{equation*}
		\delta(q_0, q_1, q_2) := q_0^2 +q_1^2 +q_2^2 - 2q_0q_1 - 2 q_0q_2 - 2q_1q_2,
	\end{equation*}
	but $\qQ\cap\delta\inv\{0\}=\varnothing$.
	
	The map $\wW$ is surjective: let $q=(q_0, q_1, q_2)\in\qQ$. Let $(\Sigma, \xi, \Phi, \la_0, \la_1)$ be LWR data as in corollary~\ref{corollary:trinoid-potential}. Since $q\in\qQ$, proposition~\ref{prop:eigenvalues-trinoid} and theorem~\ref{theorem:unitarization-E3} ensure that $\{\dot{M}_0, \dot{M}_1, \dot{M}_2\}_{\la_0}$ is irreducible and unitarizable. Let $C$ be a unitarizer. Then the LWR frame $C\Phi$ induces an irreducible trinoid.
	
	The map $\wW$ is injective: let $f_1$ and $f_2$ be two irreducible trinoids with the same weights. By corollary~\ref{corollary:trinoid-potential}, $f_1$ and $f_2$ can be obtained via the LWR from the same potential and the same evaluation points. Therefore, the corresponding LWR frames $\Phi_1$ and $\Phi_2$ only differ by an initial condition: there exists $C=(C_\la)_{\la\in\Cc}$ such that $\Phi_2 = C\Phi_1$. With $M$ and $N$ the monodromies of $\Phi_1$ and $\Phi_2$ respectively, this implies that $\{\dot{M}_0, \dot{M}_1, \dot{M}_2\}_{\la_0}$ and $\{\dot{N}_0, \dot{N}_1, \dot{N}_2\}_{\la_0}$ are conjugated by $C_{\la_0}$ (see theorem~\ref{theorem:closing-conditions}). We deduce that $C_{\la_0}\in\SU_2$. By theorem~\ref{theorem:rigid-motions}, $f_1=f_2$ up to a rigid motion.
	
	The case of $\Hh^3$ is treated similarly. Let $\tT$ be the set of irreducible CMC 1 trinoids (modulo rigid motions). The relevant eigenvalues are now given by \eqref{eq:nu-k} in place of \eqref{eq:dot-nu-k}.
	Let $\vV$ be the set of triples $(\nu_0, \nu_1, \nu_2)\in\Rr^3$ whose absolute values satisfy the spherical triangle inequalities, let $\Delta := \delta\inv(\{0\})$ and let 
	\begin{equation*}
		\rho\colon(-\infty, 1/2)\to(-1/4, +\infty),\quad (\nu_0, \nu_1, \nu_2)\mapsto \left(\nu_k(\nu_k-1)\right)_{k=1,2,3}
	\end{equation*}
	and let $\qQ := \rho(\vV)\backslash\Delta$. One can show, with the same arguments as in Euclidean space, that the map $\wW\colon\tT\to\qQ$ which associates its weights to a trinoid is a bijection.
\end{proof}


\section{Simple factor dressing}\label{sec:simple-factor-dressing}

Darboux transformations of minimal and CMC $1$ surfaces are constructed in \cite{tenenblat2003, tenenblat2012}. The same type of transformations for non-minimal and CMC $H>1$ surfaces are described via a dressing action on the DPW data in \cite{cho2019simple}. We aim at obtaining the surfaces of \cite{tenenblat2003, tenenblat2012} using the methods of \cite{cho2019simple} in the framework of the LWR. 

We first define a simple factor dressing action on LWR frames and potentials, and show that simple factor dressing preserves the Hopf differential. We then give a way to control the monodromy of the dressed surface and give examples obtained from a catenoid.

\begin{figure}[h]
	\centering
	\begin{subfigure}{0.49\textwidth}
		\includegraphics[width=\linewidth]{./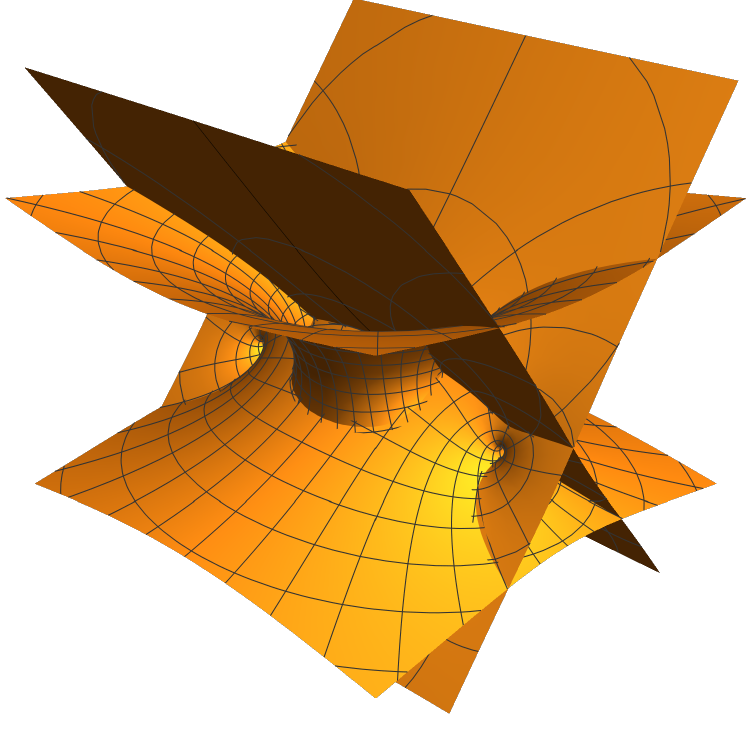}
		\caption{Full surface}
		\label{fig:full}
	\end{subfigure}
	\hfill
	\begin{subfigure}{0.49\textwidth}
		\includegraphics[width=\linewidth]{./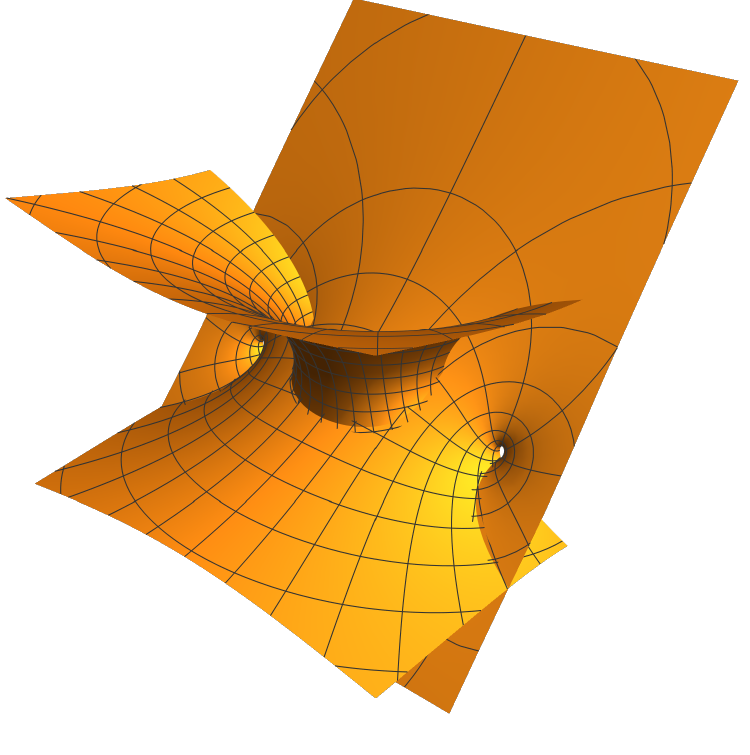}
		\caption{Half view}
		\label{fig:cut}
	\end{subfigure}
	\caption{Simple factor dressing of the catenoid in $\Ee^3$, as in example~\ref{ex:dressed-catenoid2} with $(p,q)=(\frac{1}{4},1)$ and $(u,\ell) = (2,2)$.}
	\label{fig:dressed-catenoid2}
\end{figure}

\subsection{Dressed frames}

\begin{definition}
	Let $\alpha\in\Cc$ and $\ell\neq m\in\CP^1$. The \textbf{simple factor} $\SF(\alpha, \ell, m)$ is the $\la$-family of 2-by-2 matrices
	\begin{equation*}
		\SF(\alpha, \ell, m) := S\Delta S\inv
	\end{equation*}
	where
	\begin{equation*}
		\Delta := \matrix{\la-\alpha}{0}{0}{1}
	\end{equation*}
	and $S\in\GL_2\Cc$ is independent of $\la$ and has its first column vector in $\ell$ and its second column vector in $m$. 
\end{definition}

\begin{remark}~
	\begin{itemize}
		\item Simple factors are well-defined: multiplying the columns of $S$ by scalars amounts to multiplying $S$ on the right by a diagonal matrix, which commutes with $\Delta$.
		\item $\SF$ is injective because a 2-by-2 diagonalizable matrix is determined by its ordered pairs of eigenvalues and eigenlines.
		\item Permuting the diagonal entries of $\Delta$ in $\SF(\alpha, \ell, m)$ defines the simple factor $\SF(\alpha, m, \ell)$. Thus, for all $\alpha$, $\ell$ and $m$,
		\begin{equation}\label{eq:inverse-simple-factor}
			\SF(\alpha, \ell, m)\inv = (\la-\alpha)\inv \SF(\alpha, m, \ell).
		\end{equation}
	\end{itemize}
\end{remark}

\begin{theorem}\label{theorem:simple-factor-dressing}
	Let $\Phi$ be an LWR frame and assume that $\Phi$ is not flat. For any $z$-independent simple factor $g$, there exists a unique $z$-dependent simple factor $h$ such that
	\begin{equation}\label{eq:simple-factor-dressing}
		g\#\Phi := g\Phi h\inv
	\end{equation}
	is an LWR frame.
\end{theorem}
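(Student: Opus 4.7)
The plan is to parameterize $h$ as $h = \SF(\beta, \ell_h(z), m_h(z))$ and extract the triple $(\beta, \ell_h, m_h)$ from successive necessary conditions for $\tilde\Phi := g\Phi h\inv$ to be an LWR frame. With projectors $P_1 := P_{\ell_h, m_h}$ and $P_2 := P_{m_h, \ell_h}$ (satisfying $P_1 + P_2 = I$, $P_1 P_2 = P_2 P_1 = 0$), one has $h = (\la-\beta)P_1 + P_2$ and $h\inv = (\la-\beta)\inv P_1 + P_2$. The determinant identity $\det\tilde\Phi = (\la-\alpha)/(\la-\beta) \equiv 1$ immediately forces $\beta = \alpha$, and requiring holomorphicity of $\tilde\Phi$ at $\la=\alpha$ is the residue condition $g(\alpha)\Phi(z,\alpha)P_1 = 0$; since $\ker g(\alpha) = \ell$, this in turn forces
\[
\ell_h(z) \;=\; \Phi(z,\alpha)\inv \ell.
\]

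With $\tilde\Phi$ now holomorphic in $\la$ and unimodular, $\tilde\Phi\inv$ is holomorphic too, so $\tilde\xi := \tilde\Phi\inv d\tilde\Phi = h\xi h\inv - dh\cdot h\inv$ is a rational function of $\la$ with no poles, hence polynomial. A direct expansion in powers of $\mu := \la-\alpha$ shows that the $\la^2$ coefficient of $\tilde\xi$ equals $P_1 A P_2\,dz$, which vanishes if and only if $A(m_h) \subseteq m_h$. Non-flatness of $\Phi$ gives $A \not\equiv 0$: being nilpotent of rank one, $\ker A = \mathrm{im}\,A$ is a single line, spanned by the spinor $x$, and this is the unique $A$-invariant line. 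Therefore $m_h(z) = \ker A(z)$, extended by continuity across the isolated points where $A$ vanishes.

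It remains to verify that the $\la^1$ coefficient of $\tilde\xi$ is nilpotent and proportional to $dz$. Using $P_1 A = A P_2 = 0$ (from $m_h = \ker A = \mathrm{im}\,A$), this coefficient simplifies to $(P_1 B' P_2)\,dz - dP_1\,P_2$ with $B' := A\alpha + B$. Since $P_1^2 = P_1$ yields $dP_1\,P_2 = P_1\,dP_1\,P_2$, both summands have image in $\ell_h$ and kernel containing $\ell_h$, hence their sum is nilpotent; proportionality to $dz$ follows from $P_1$ being holomorphic in $z$. Combined with $\tr \tilde\xi = d\log\det\tilde\Phi = 0$, this shows $\tilde\xi$ is an LWR potential, so $\tilde\Phi = g\#\Phi$ is an LWR frame; uniqueness is automatic since each of $\alpha, \ell_h, m_h$ was forced. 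The main technical obstacle is the projector bookkeeping in the $\la$-expansion of $h\xi h\inv - dh\cdot h\inv$ (including a short check that the forced $\mu\inv$ term in $\tilde\xi$ really vanishes for our $\ell_h$); the conceptual point is that the pole-cancellation condition at $\la=\alpha$ and the polynomial-degree condition at $\la=\infty$ exactly determine the two lines defining $h$, with non-flatness precisely ensuring $m_h=\ker A$ is unambiguous.
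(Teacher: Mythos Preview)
Your approach is essentially the same as the paper's: derive $\beta=\alpha$ from the determinant, $\ell_h=\Phi_\alpha^{-1}\ell$ from holomorphicity of $\tilde\Phi$ at $\la=\alpha$, and $m_h=\ker A$ from the requirement that $\tilde\xi$ be linear in $\la$ with nilpotent top term. The paper does the last step by writing $h=S\Delta S^{-1}$ and computing the entries of $\xi\cdot S$ explicitly; your projector calculus is a clean equivalent.

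There is one genuine omission. For $h=\SF(\alpha,\ell_h,m_h)$ to be a simple factor at all you need $\ell_h(z)\neq m_h(z)$, and you never check this. You invoke non-flatness only to obtain $A\not\equiv 0$, which ensures $m_h=\ker A$ is a well-defined line, but it does \emph{not} rule out $\ell_h\equiv m_h$. The paper uses the full strength of non-flatness here: the coincidence set is $\sS=\{z:\Phi_\alpha w\in\ell\}$ (with $w$ the spinor), and if $\sS$ were the whole domain then $\Phi_\alpha w$ would lie in the fixed line $\ell$ for all $z$, forcing $q=\det(\Phi w,(\Phi w)_z)\equiv 0$ at $\la=\alpha$ and hence (by Proposition~\ref{prop:q}) everywhere, contradicting $Q\not\equiv 0$. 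Thus $\sS$ is discrete and $h$ is a genuine simple factor on its complement. Your sentence ``with non-flatness precisely ensuring $m_h=\ker A$ is unambiguous'' undersells what non-flatness is doing; add the argument that $\ell_h\not\equiv m_h$ and your proof is complete.
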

\begin{proof}
	Write $g=\SF(\alpha, \ell, m)$.
	We first show that the simple factor $h$ is necessarily given by
	\begin{equation}\label{eq:simple-factor-h}
		h = \SF(\alpha, x, y),\quad x = \Phi_\alpha\inv \ell,\quad y=\spann w.
	\end{equation}
	where $w$ is a spinor for $A$ in the potential $\xi = (A\la+B)dz$ of $\Phi$.
	
	Let $h=\SF(\beta, x, y)$ such that $\hat{\Phi}:=g\Phi h\inv$ is an LWR frame. Then $\beta = \alpha$ because $\det \hat{\Phi} = 1$. 
	Moreover, $\hat{\Phi}$ is holomorphic at $\la=\alpha$, so on the one hand,
	\begin{equation*}
		(\la-\alpha)g\Phi h\inv \tendsto{\la\to\alpha}0.
	\end{equation*}
	On the other hand, by \eqref{eq:inverse-simple-factor},
	\begin{equation*}
		(\la-\alpha)h\inv = \SF(\alpha, y, x)
	\end{equation*}
	so
	\begin{equation*}
		(\la-\alpha)g\Phi h\inv x \tendsto{\la\to \alpha} g_\alpha \Phi_\alpha x.
	\end{equation*}
	Therefore, in order for $\hat{\Phi}$ to be holomorphic at $\la=\alpha$, it is necessary that $x=\Ker g_\alpha\Phi_\alpha$, i.e. $x=\Phi_\alpha\inv \ell$.
	To determine $y$, let $\xi=\Phi\inv d\Phi$ and $\hat{\xi} = \xi\cdot h\inv$. The frame $\hat{\Phi}$ is LWR, so $\hat{\xi}$ is an LWR potential. Writing $h=S\Delta S\inv$, $S$ is $\la$-independent, so both $\tilde{\xi}:= \xi\cdot (S\Delta\inv)$ and $\xi\cdot S$ are LWR potentials. But $\Delta$ is $z$-independent, so
	\begin{equation*}
		\tilde{\xi} = \Delta (\xi\cdot S) \Delta\inv.
	\end{equation*}
	With $\xi\cdot S = \matrix{a}{b}{c}{-a}$,
	\begin{equation}\label{eq:xitilde}
		\tilde{\xi} = \matrix{a}{b(\la-\alpha)}{c(\la-\alpha)\inv}{-a}.
	\end{equation}
	Therefore, $c_\alpha = 0$. Differentiating at $\la=\alpha$ gives
	\begin{equation*}
		\dot{\tilde{\xi}} = \matrix{\dot{a}_\alpha}{b_\alpha}{0}{-\dot{a}_\alpha}.
	\end{equation*}
	This matrix is nilpotent, so $\dot{a}_\alpha = 0$. By linearity of $\xi\cdot S$, one can write
	\begin{equation}\label{eq:xiS}
		\xi\cdot S = \matrix{\hat{a}}{\hat{b}}{(\la-\alpha)\hat{c}}{-\hat{a}}
	\end{equation}
	with $\hat{a},\hat{b},\hat{c}$ independent of $\la$. With $\xi = (A\la+B)dz$, this implies that
	\begin{equation*}
		SAS\inv = \matrix{0}{0}{\hat{c}}{0}\quad \text{and}\quad SBS\inv = \matrix{\hat{a}}{\hat{b}}{-\alpha\hat{c}}{-\hat{a}}.
	\end{equation*}
	Therefore, with $A=ww^\perp$, $w\in y$, and this determines $h=\SF(\alpha, \Phi_\alpha\inv \ell, y)$ uniquely.
	
	To show existence, let $\Sigma$ be the Riemann surface on which $\xi$ is defined and let $\tilde{\Sigma}$ be its universal cover, so that $\Phi$ is defined on $\tilde{\Sigma}$. Let $h$ as in~\eqref{eq:simple-factor-h}. Then $h$ is defined on $\tilde{\Sigma}\backslash\sS$ where
	\begin{equation}\label{eq:extra-singularities}
		\sS = \{z\in\tilde{\Sigma}\mid \Phi_\alpha w \in \ell\}.
	\end{equation}
	By meromorphicity of $\Phi$ and $w$, $\sS$ is either a set of isolated points or the entire $\tilde{\Sigma}$. Suppose that $\sS=\tilde{\Sigma}$. Then
	\begin{equation*}
		\det(\Phi w, (\Phi w)_z) \tendsto{\la\to \alpha}0
	\end{equation*} 
	because $\ell$ is constant in $z$. By theorem~\ref{theorem:fundamental-forms}, this implies that $Q=0$, and that contradicts the non-flatness of $\Phi$. Therefore, $\sS$ is a set of isolated points in $\tilde{\Sigma}$, and $h$ is well-defined on $\tilde{\Sigma}\backslash\sS$. Therefore, the frame $\hat{\Phi}$ defined by \eqref{eq:simple-factor-dressing} is a frame on $\tilde{\Sigma}\backslash\sS$, and the uniqueness part of this proof shows that it is an LWR frame.
\end{proof}

\begin{definition}
	The map $\Phi\mapsto g\#\Phi$ of \eqref{eq:simple-factor-dressing} is the \textbf{simple factor dressing} of $\Phi$ by $g$. It descends, via the LWR, to a map on the induced immersions, also called simple factor dressing.
\end{definition}

\begin{corollary}\label{corollary:hopf-simple-factor-dressing}
	Simple factor dressing preserves the Hopf differential.
\end{corollary}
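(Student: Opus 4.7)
My plan is to reduce the assertion to the gauge-invariance of the scalar $q$ from Proposition~\ref{prop:q}. By Theorem~\ref{theorem:fundamental-forms}, the Hopf differential of an LWR immersion reads $Q\,dz^2=(\la_1-\la_0)q\,dz^2$, and $q$ depends only on the LWR-gauge orbit of the potential. So the goal becomes: show that $q$ is unchanged when passing from $\xi$ to the potential $\hat\xi$ of $\hat\Phi:=g\#\Phi$.

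Since $g$ is $z$-independent, one checks that $\hat\xi=\xi\cdot h^{-1}$, and writing $h=S\Delta S^{-1}$ as in the proof of Theorem~\ref{theorem:simple-factor-dressing}, the right-action property of the gauge together with $d\Delta=0$ would give
\begin{equation*}
    \hat\xi\cdot S \;=\; \Delta\,(\xi\cdot S)\,\Delta^{-1}.
\end{equation*}
Now $S$ is $\lambda$-independent, hence an honest LWR gauge, so Proposition~\ref{prop:q} yields $q_\xi=q_{\xi\cdot S}$ and $q_{\hat\xi}=q_{\hat\xi\cdot S}$; the problem is thereby reduced to showing that pure conjugation by the $\lambda$-dependent matrix $\Delta$ preserves $q$.

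To close this, I would invoke the explicit normal form extracted in the uniqueness part of Theorem~\ref{theorem:simple-factor-dressing}, namely $\xi\cdot S=\smatrix{a}{b}{(\la-\alpha)c}{-a}\,dz$ with $a,b,c$ depending only on $z$. The linear part of this potential has spinor $x=(0,\sqrt{-c})$, so the formula $q=\det(x,Bx+x_z)$ of Remark~\ref{remark:hopf} gives $q_{\xi\cdot S}=bc$ after a one-line computation. On the other side, $\Delta(\xi\cdot S)\Delta^{-1}=\smatrix{a}{(\la-\alpha)b}{c}{-a}\,dz$, whose linear part has spinor $\hat x=(\sqrt b,0)$; the same formula yields $q_{\hat\xi\cdot S}=bc$, so the two agree.

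The main technical point is that $\Delta$ is \emph{not} an LWR gauge---it depends on $\la$---so Proposition~\ref{prop:q} cannot be applied directly; what rescues the argument is the very specific placement of the factor $(\la-\alpha)$ in the off-diagonal of $\xi\cdot S$, which is precisely what the proof of Theorem~\ref{theorem:simple-factor-dressing} forces upon a simple factor dressing. Under conjugation by $\Delta$ this factor merely migrates to the other off-diagonal entry, leaving the product $bc$---which computes $q$---unchanged.
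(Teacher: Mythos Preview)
Your argument is correct and follows essentially the same route as the paper: both reduce to the identity $\hat\xi\cdot S=\Delta(\xi\cdot S)\Delta^{-1}$, use that $S$ is a $\la$-independent gauge so that Proposition~\ref{prop:q} applies on either side, and then verify that conjugation by $\Delta$ preserves $q$ thanks to the special off-diagonal structure forced on $\xi\cdot S$. The only difference is cosmetic: the paper invokes ``without loss of generality'' to put $\xi\cdot S$ in Schwarz form (i.e.\ your $a=0$, $b=1$) and reads off $q$ from $\smatrix{0}{\la-\alpha}{q}{0}$, whereas you keep the general form $\smatrix{a}{b}{(\la-\alpha)c}{-a}$ from the proof of Theorem~\ref{theorem:simple-factor-dressing} and compute $q=bc$ on both sides via the spinor formula of Remark~\ref{remark:hopf}---arguably the cleaner path, since it sidesteps justifying why one may assume Schwarz form for $\xi\cdot S$ when $S$ is already fixed by the dressing.
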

\begin{proof}
	Let $Qdz^2$ and $\tilde{Q}dz^2$ be the Hopf differentials induced by $\Phi$ and $\tilde{\Phi}:=g\#\Phi=g\Phi h\inv$ respectively.
	The LWR potential of $\tilde\Phi$ is $\tilde\xi:=\xi\cdot h\inv$. By theorem~\ref{theorem:simple-factor-dressing}, $h$ is a simple factor, so $h=S\Delta S\inv$. Therefore, $\tilde{\xi} = \xi\cdot(S\Delta\inv S\inv)$, and we have
	\begin{equation*}
		\hat{\xi} := \tilde{\xi}\cdot S = (\xi\cdot S)\cdot \Delta\inv.
	\end{equation*}
	Let $\hat{\Phi}:=\tilde{\Phi}S$ be an LWR frame for $\tilde{\xi}$ and let $\hat{Q}dz^2$ be the Hopf differential induced by $\hat{\Phi}$. On the one hand, $S$ is an LWR gauge, so $\hat{Q} = \tilde{Q}$. On the other hand, assuming without loss of generality that $\xi\cdot S$ is in its Schwarz form (theorem~\ref{theorem:schwarz-potential}), with $Qdz^2 = (\la_1-\la_0)qdz^2$,
	\begin{equation*}
		\hat{\xi} = \matrix{0}{1}{q\la +s}{0}\cdot\matrix{(\la-\alpha)\inv}{0}{0}{1}dz = \matrix{0}{\la-\alpha}{\frac{q\la+s}{\la-\alpha}}{0}dz.
	\end{equation*}
	But $\hat{\xi}$ is an LWR potential, so $q\alpha + s=0$. Therefore,
	\begin{equation*}
		\hat{\xi} = \matrix{0}{\la-\alpha}{q}{0}dz.
	\end{equation*}
	By remark~\ref{remark:hopf}, $\hat{Q}=Q$.
\end{proof}

\begin{theorem}
	Let $\Phi$ be a non-flat LWR frame with monodromy $M$. Let $g=\SF(\alpha, \l, m)$. If $\l$ is an eigenline of $M_\alpha$, then the monodromy $\hat{M}$ of $g\#\Phi$ is
	\begin{equation*}
		\hat{M} = gMg\inv.
	\end{equation*}
\end{theorem}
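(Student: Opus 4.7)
The plan is to verify the monodromy formula by pulling back the factorization $\hat\Phi = g\Phi h\inv$ along the deck transformation $\tau$ corresponding to the given loop. Since $g$ is independent of $z$,
\[
\tau^*\hat\Phi = g(\tau^*\Phi)(\tau^*h)\inv = gM\Phi(\tau^*h)\inv.
\]
If $\tau^*h = h$, then this rewrites as
\[
\tau^*\hat\Phi = (gMg\inv)(g\Phi h\inv) = (gMg\inv)\hat\Phi,
\]
which identifies the monodromy of $\hat\Phi$ as $\hat M = gMg\inv$. The bulk of the proof is therefore the identity $\tau^*h = h$.

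By the formula \eqref{eq:simple-factor-h} from the proof of Theorem~\ref{theorem:simple-factor-dressing}, $h = \SF(\alpha, x, y)$ with $x := \Phi_\alpha\inv\ell$ and $y := \spann w$ for $w$ a spinor of the nilpotent matrix $A$ in $\xi = (A\la + B)dz$. Since $\xi$ descends to a meromorphic 1-form on $\Sigma$, its coefficients are invariant under the deck transformation $\tau$ on $\widetilde\Sigma$; in particular the projective line $y = \Ker A = \spann w$ satisfies $\tau^*y = y$. For $x$, the monodromy relation $\tau^*\Phi = M\Phi$ gives $\tau^*(\Phi_\alpha\inv) = \Phi_\alpha\inv M_\alpha\inv$, so $\tau^*x = \Phi_\alpha\inv M_\alpha\inv\ell$. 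The hypothesis that $\ell$ is an eigenline of $M_\alpha$ is equivalent to $M_\alpha\inv\ell = \ell$ as projective lines, whence $\tau^*x = \Phi_\alpha\inv\ell = x$. A simple factor is determined by its eigenvalue and its pair of eigenlines, so $\tau^*h = h$.

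The main obstacle is really just projective bookkeeping: the columns of the matrix $S$ in $\SF(\alpha,\ell,m) = S\Delta S\inv$ are only determined up to nonzero scalars, so the equalities $\tau^*x = x$ and $\tau^*y = y$ must be read at the level of projective lines rather than of vectors. A reassuring consistency check is that the eigenline hypothesis is exactly what forces $gMg\inv$ to extend holomorphically across the a priori pole at $\la = \alpha$: writing $S\inv MS = \smatrix{a}{b}{c}{d}$, the condition that $\ell = Se_1$ is an eigenline of $M_\alpha$ is equivalent to $c_\alpha = 0$, which cancels the pole of $\Delta\inv$ in $gMg\inv = S\Delta S\inv MS\Delta\inv S\inv$.
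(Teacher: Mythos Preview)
Your proof is correct and follows essentially the same route as the paper: both show $\tau^*h = h$ by checking $\tau^*y = y$ (since $\xi$ is defined on $\Sigma$) and $\tau^*x = \Phi_\alpha\inv M_\alpha\inv\ell = \Phi_\alpha\inv\ell = x$ (since $\ell$ is an eigenline of $M_\alpha$), then conclude $\tau^*\hat\Phi = gM\Phi h\inv = (gMg\inv)\hat\Phi$. Your additional remarks on the projective bookkeeping and the holomorphicity of $gMg\inv$ at $\la=\alpha$ are helpful elaborations, but the argument itself matches the paper's.
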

\begin{proof}
	Let $\xi$ be the potential of $\Phi$ and let $w$ be the associated spinor. Assume that $\xi$ is defined on $\Sigma$ and let $\tilde{\Sigma}$ be its universal cover.
	Let $h=\SF(\alpha, x, y)$ with $x=\Phi_\alpha\inv \ell$ and $y=\spann w$ so that the dressed frame reads $\hat{\Phi} = g\Phi h\inv$ as in theorem~\ref{theorem:simple-factor-dressing}. 
	Note that $h$ has no monodromy around the extras singularities defined by \eqref{eq:extra-singularities}. 
	Let $\tau$ be a deck transformation of $\tilde{\Sigma}\to\Sigma$ and let $M$ be the corresponding monodromy matrix for $\Phi$.
	The potential $\xi$ is well-defined on $\Sigma$, so $\tau^*y = y$. Moreover,
	\begin{equation*}
		\tau^*x = \tau^* \Phi_\alpha\inv \ell 
		= \Phi_\alpha\inv M_\alpha\inv \ell 
		= \Phi_\alpha\inv l 
		= x
	\end{equation*}
	because $\ell$ is an eigenline of $M_\alpha$. Therefore $\tau^* h = h$, and because $g$ is $z$-independent,
	\begin{equation*}
		\tau^* \hat{\Phi} = g (\tau^*\Phi) h\inv = g M \Phi h\inv = gMg\inv \hat{\Phi}.\qedhere
	\end{equation*}
\end{proof}

\subsection{Example: dressed catenoids}

\begin{theorem}
	For any catenoid, there exists a two-real-parameter family of simple factor dressings, each one admitting a discrete rotational symmetry and an extra end on every fundamental piece. The first parameter $u>0$ determines the angle of the rotation, while the second parameter $\ell\in\RP^1$ determines the location of the end. The dressed surface closes over a cover of the catenoid's domain if and only if $u\in\Qq$. In this case, $u=n/r$ where $r$ is the wrapping number and $n$ is the number of extra ends.
\end{theorem}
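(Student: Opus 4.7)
The plan is to push the standard catenoid LWR data from Theorem~\ref{theorem:catenoids-example} through the simple factor dressing of Theorem~\ref{theorem:simple-factor-dressing} and extract the symmetry and closing behaviour from an explicit monodromy computation. For a catenoid, $\xi = Kz^{-1}dz$ with $K = \smatrix{0}{1}{q\la+p}{0}$ and frame $\Phi = Cz^K$; a direct computation shows that the spinor $w$ of $\xi$ satisfies $\spann w = \spann(0,1)$ for all $z$. Consequently, for any simple factor $g = \SF(\alpha,\ell,m)$ the auxiliary factor furnished by Theorem~\ref{theorem:simple-factor-dressing} is $h = \SF(\alpha, \Phi_\alpha\inv\ell, \spann(0,1))$, so only the first eigenline of $h$ carries $z$-dependence.

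Next I would analyse the rotational symmetry. Under $\tau_t(z) = e^{it}z$ the catenoid frame satisfies $\tau_t^*\Phi = R(t)\Phi$ with $R(t) = C\exp(itK)C\inv$, hence $\tau_t^*h = \SF(\alpha, R(t)_\alpha\inv\Phi_\alpha\inv\ell, \spann(0,1))$. By Theorems~\ref{theorem:symmetries-H3} and~\ref{theorem:symmetries-E3}, the dressed surface inherits $\tau_{t_0}$ as a symmetry precisely when $R(t_0)_\alpha \ell = \ell$ in $\CP^1$. To keep $\ell$ free I impose $R(t_0)_\alpha = \pm\I$; with $\mu_\alpha := \sqrt{q\alpha+p}$, this is $t_0\mu_\alpha \in \pi\Zz$. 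For any $u > 0$ I set $t_0 = 2\pi/u$ and $\mu_\alpha = u/2$, which determines $\alpha = (u^2/4 - p)/q \in \Rr$. Reality of the dressed immersion, imposed by the $\SU_2$-constraint at the relevant evaluation point, then fixes $m$ in terms of $\ell$ and restricts $\ell \in \CP^1$ to $\RP^1$, yielding the claimed two-real-parameter family $(u,\ell)$.

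To locate the extra end I would solve $\Phi_\alpha w \in \ell$ using~\eqref{eq:extra-singularities}. Diagonalising $K_\alpha$ with eigenvalues $\pm\mu_\alpha$ and using $w \propto (0,1)$ reduces the condition to $z^{2\mu_\alpha} = w_0(\ell)$ for some constant $w_0$, so the extra singularities form a single orbit under $z \mapsto e^{i\pi/\mu_\alpha}z = e^{2\pi i/u}z$; this places exactly one extra end in each fundamental sector of $\tau_{t_0}$. For the closing condition, the dressed monodromy is $\hat{M} = gMg\inv$ once $\ell$ is an eigenline of $M_\alpha$, which is automatic on the appropriate cover from $R(t_0)_\alpha = \pm\I$. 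Combining this with Theorem~\ref{theorem:closing-conditions} and the wrapping number $r = 2\sqrt{p}$ from Proposition~\ref{prop:catenoid-potential}, the closing conditions at $(\la_0,\la_1) = (0,1)$ reduce on a $k$-fold cover to the integrality of $ku/r$; equivalently $u \in \Qq$, and the minimal such $k$ identifies $u = n/r$ where $n$ counts the extra ends in one closed period.

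The main obstacle will be the final closing step: on the universal cover the dressing introduces a whole spiral of new punctures, and one must verify that their monodromy contributions telescope consistently against the original catenoidal monodromy to produce the clean constraint $u \in \Qq$. Tracking the $\pm\I$ sign factors contributed by the auxiliary factor $h$ simultaneously at $\la_0$ and $\la_1$, and ensuring that the $\RP^1$-reality of $\ell$ is compatible with the $\SU_2$-constraint in both $\Ee^3$ and $\Hh^3$, will be the delicate ingredients.
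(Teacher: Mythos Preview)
Your overall strategy matches the paper's: exploit the explicit frame $\Phi = Cz^K$, identify the second eigenline of $h$ as the constant spinor line, choose $\alpha$ so that $R(t_0)_\alpha = \pm\I$ forces $\tau_{t_0}^*h = h$, and locate the extra end by solving $\Phi_\alpha w \in \ell$. This part is fine and essentially what the paper does.

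The genuine gap is your handling of the $\SU_2$-constraint. You assert that ``reality of the dressed immersion \ldots fixes $m$ in terms of $\ell$ and restricts $\ell$ to $\RP^1$'', but neither claim is correct. In the paper $m\in\CP^1$ remains \emph{arbitrary}; instead of constraining $g=\SF(\alpha,\ell,m)$, one replaces it by the normalized factor $\hat g := \sqrt{\det g_{\la_0}}\,g_{\la_0}^{-1}g$ (respectively with $\la_1$ in $\Hh^3$), so that $\hat g_{\la_0}$ is scalar. The dressed monodromy $\hat R = \hat g R(\pi/\mu_\alpha)\hat g^{-1}$ then satisfies $\hat R_{\la_0} = R_{\la_0}\in\SU_2$ for free, and since $\hat g_{\la_0}^{-1}\hat g$ equals $\I$ at $\la_0$, Proposition~\ref{prop:rigid-motion-homomorphism-E3} shows $\hat R$ induces a genuine \emph{rotation} of the same angle as $R$, not merely a rigid motion---a point you do not address either. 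Without this normalization your bare conjugate $g_{\la_0}R_{\la_0}g_{\la_0}^{-1}$ has no reason to lie in $\SU_2$, and attempting to enforce it by choosing $m$ entangles $m$ with the eigenbasis of $R_{\la_0}$ rather than producing a clean two-parameter family. The restriction $\ell\in\RP^1$ is also unrelated to unitarity: it arises because the $\Ss^1$-phase of $\ell$ can be absorbed simultaneously into a diagonal dressing of $\Phi$ and a rotation of the ambient space, so $\ell$ is only well-defined modulo that phase.
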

\begin{proof}
	We first study the Euclidean case. Let $f$ be a catenoid in $\Ee^3$. By theorem~\ref{theorem:catenoids-example}, $f$ is obtained via the LWR with the data of theorem~\ref{theorem:catenoids-example}. Thus, $\Phi = C z^K$ with $\det K = -(q\la + p)$ with $q\in\Rr^*$ and $p>0$. Let
	\begin{equation*}
		u>0,\quad u\neq 1,\quad \ell\in\RP^1.
	\end{equation*}
	Write $\det K_\la = -\mu_\la^2$ with $\mu_0>0$. Let $\alpha:=p(u^2-1)/q\in\Rr^*$ so that $\mu_\alpha=u\sqrt{p}>0$ and $\mu_\alpha/\mu_0=u>0$. Let $g := \SF(\alpha, \ell, m)$ for any $m\in\CP^1$. Define
	\begin{equation*}
		\hat{\Phi} := \hat{g}\Phi h\inv,\quad \hat{g} :=
		\sqrt{\det g_0}g_0\inv g.
	\end{equation*}
	where $h$ is defined via $g\#\Phi = g\Phi h\inv$ as in theorem~\ref{theorem:simple-factor-dressing}.
	Let $\hat{f}$ be the immersion induced by $\hat{\Phi}$ at the evaluation points $(\la_0, \la_1) = (0,1)$. 
	
	To check that $\hat{f}$ admits a discrete rotational symmetry, let $\tau\colon z\mapsto e^{i\pi/\mu_\alpha}z$ and recall that $\tau^*\Phi = R(\pi/\mu_\alpha)\Phi$ with $R(t) = C\exp(i t K)C\inv$. Compute $R_\alpha(\pi/\mu_\alpha) = -\I$ and deduce with \eqref{eq:simple-factor-h} that $\tau^*x=x$. 
	Moreover,  $\tau^*y=y$, so $\tau^*h = h$. We then have
	\begin{equation*}
		\tau^* \hat{\Phi} = \hat{R}\hat{\Phi}
	\end{equation*}
	with
	\begin{equation}\label{eq:Rhat}
		\hat{R} := \hat{g}R(\pi/\mu_\alpha)\hat{g}\inv.
	\end{equation}
	Note that $\hat{R} = \tilde{g}R(\pi/\mu_\alpha)\tilde{g}\inv$ where $\tilde{g}:=g_0\inv g\in\SL_2\Cc$ is holomorphic for $\la$ in a neighborhood of $\la_0=0$ and $\tilde{g}_0=\I$. 
	Moreover, the wrapping number of the catenoid is $2\mu_0$, so $R(\pi/\mu_\alpha)$ induces a rotation of angle $\frac{2\pi \mu_0}{\mu_\alpha}$. By proposition~\ref{prop:rigid-motion-homomorphism-E3}, $\hat{R}$ induces a rotation of the same angle. Therefore, a rotation of angle $2\pi/(2u\sqrt{p})$ in the domain induces a rotation of angle $2\pi/u$ in space.
	
	One can compute explicitly the location of the singularities of $h$ by solving for $z$ with the notations of \eqref{eq:simple-factor-h}:
	\begin{equation*}
		\Phi_\alpha\inv \ell = y \quad \iff\quad  \Phi_\alpha w \in \ell \quad \iff\quad -\frac{1}{\mu_0}\cdot \frac{(\mu_0+\mu_\alpha)z^{2\mu_\alpha} - (\mu_0-\mu_\alpha)}{(\mu_0-\mu_\alpha)z^{2\mu_\alpha}-(\mu_0+\mu_\alpha)} = \frac{\ell_1}{\ell_2}.
	\end{equation*}
	Considering $z^{2\mu_\alpha}$ as the unknown of the equation, it has exactly one solution. Therefore, the dressed surface admits one extra end on each fundamental domain of its rotational symmetry.
	Note that the change
	\begin{equation*}
		\vectorr{\ell_1}{\ell_2}\mapsto\vectorr{\omega \ell_1}{\ell_2},\quad \Phi\mapsto \matrix{\omega^{1/2}}{0}{0}{\omega^{-1/2}}\Phi\qquad (\omega\in\Ss^1),
	\end{equation*}
	induces a rotation of the surface, and justifies our normalization $\ell\in\RP^1$. This parameter will determine the location $z_1$ of the extra singularity in each fundamental domain. By \eqref{eq:metric-lwr}, this singularity is an end because 
	\begin{equation*}
		\det h \tendsto{z\to z_1}0\implies \norm{h\inv}\tendsto{z\to z_1}\infty \implies ds^2\tendsto{z\to z_1}\infty
	\end{equation*}
	and the end is planar because the Hopf differential holomorphically extends to $z_1$ by corollary~\ref{corollary:hopf-simple-factor-dressing}.
	
	The proof in $\Hh^3$ follows the same arguments with 
	\begin{equation*}
		u>0,\quad u\neq 1,\quad u\neq \frac{\sqrt{p+q}}{\sqrt{p}},\quad \ell\in\RP^1
	\end{equation*}
	so that $\alpha\notin\{0,1\}$. Let $\hat{g} := \sqrt{\det g_1}g_1\inv g$. Use proposition~\ref{prop:rigid-motion-homomorphism-H3} to show that the surface admits a discrete rotational symmetry.
\end{proof}

\begin{figure}[h]
	\centering
	\begin{subfigure}{0.49\textwidth}
		\includegraphics[width=\linewidth]{./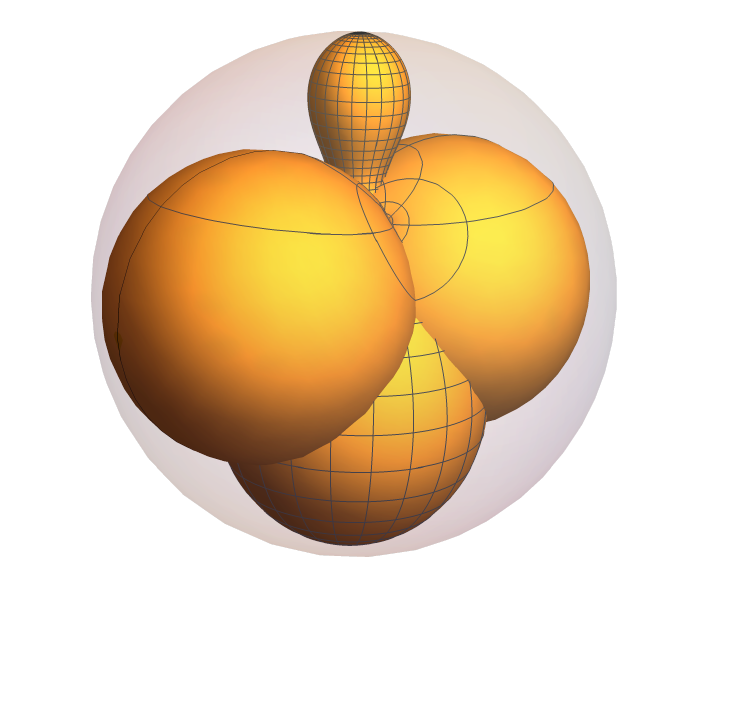}
		\caption{Full surface}
		\label{fig:fullH3}
	\end{subfigure}
	\hfill
	\begin{subfigure}{0.49\textwidth}
		\includegraphics[width=\linewidth]{./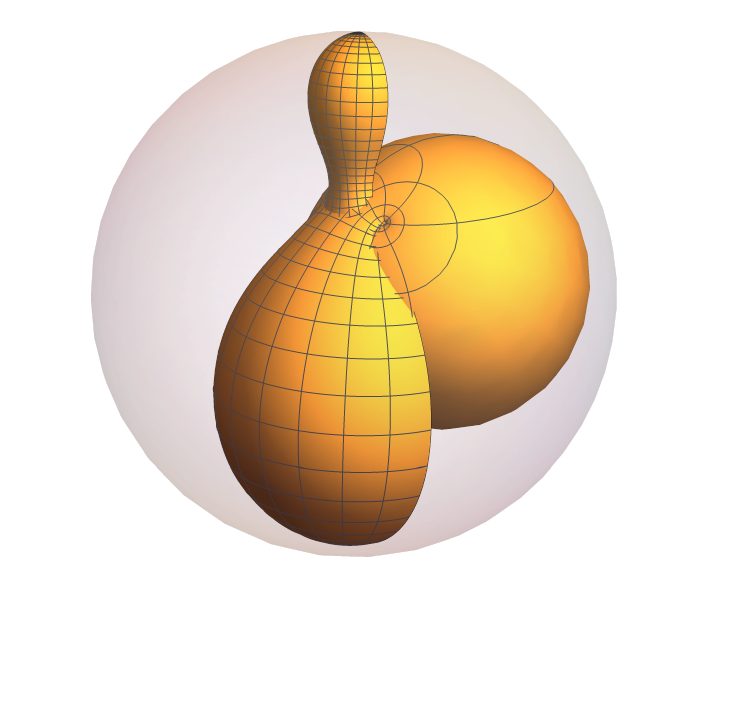}
		\caption{Half view}
		\label{fig:cutH3}
	\end{subfigure}
	\caption{Simple factor dressing of the catenoid in $\Hh^3$ viewed in the Poincaré ball model, as in example~\ref{ex:dressed-catenoid2} with $(p,q)=(\frac{1}{4},-0.1)$ and $(u,\ell) = (2,2)$.}
	\label{fig:dressed-catenoid2H3}
\end{figure}

\begin{example}\label{ex:dressed-catenoid2}
	Dressing of a singly-wrapped catenoid (figures~\ref{fig:dressed-catenoid2}--\ref{fig:dressed-catenoid2H3}).
	The original catenoid is given by
	\begin{equation*}
		p=\frac{1}{4}\quad \text{and}\quad q\in(-1/4,\infty)\backslash\{0\},
	\end{equation*}
	and the parameters for dressing are
	\begin{equation*}
		u \in\Zz_{\geq 2}\quad \text{and}\quad \ell\in\RP^1.
	\end{equation*}
	Then the dressed immersion $\hat{f}$ has a discrete rotational symmetry of order $u$ both in the domain and in space. In particular, it closes on $\Cc^*\backslash\{z_k\}_{k=1,\cdots,u}$ where the $z_k$ are the solutions of
	\begin{equation*}
		z_k^u = \frac{u(2\ell_2-\ell_1)-(2\ell_2+\ell_1)}{u(\ell_1-2\ell_2)-(2\ell_2+\ell_1)},\quad \ell := \spann(\ell_1,\ell_2).
	\end{equation*}
\end{example}

\begin{example}\label{ex:dressed-catenoid1plane}
	Dressing of a doubly-wrapped catenoid (figures~\ref{fig:dressed-catenoids-R3}--\ref{fig:dressed-catenoidR31plane}). The original catenoid is given by 
	\begin{equation*}
		p=1\quad \text{and}\quad q\in(-1,\infty)\backslash\{0\},
	\end{equation*}
	and the parameters for dressing are
	\begin{equation*}
		u = \frac{1}{2}\quad \text{and}\quad \ell\in\RP^1.
	\end{equation*}
	The fundamental piece is $\Cc^*\backslash\{z_1\}$ where
	\begin{equation*}
		z_1 = \frac{3\ell_1-\ell_2}{3\ell_2+\ell_1},\quad \ell := \spann(\ell_1,\ell_2) 
	\end{equation*}
	and the dressed immersion $\hat{f}$ closes on $\Cc^*\backslash\{z_1\}$. This construction is inspired by \cite{cho2022new}.
\end{example}

\begin{figure}[h]
	\centering
	\begin{subfigure}{0.49\textwidth}
		\includegraphics[width=\linewidth]{./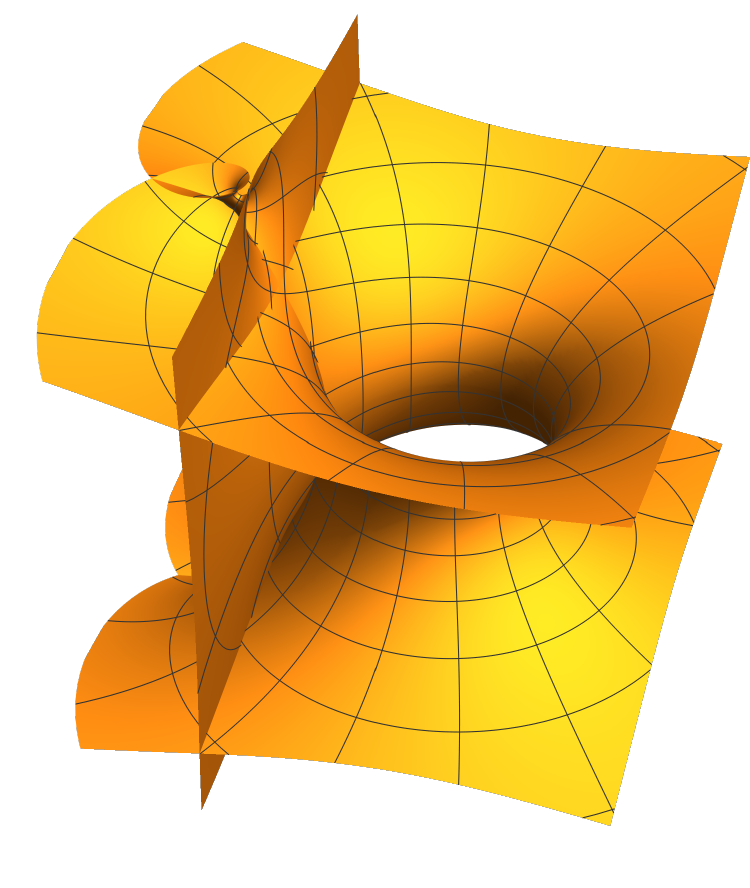}
		\caption{Full surface}
		\label{fig:fullR31plane}
	\end{subfigure}
	\hfill
	\begin{subfigure}{0.49\textwidth}
		\includegraphics[width=\linewidth]{./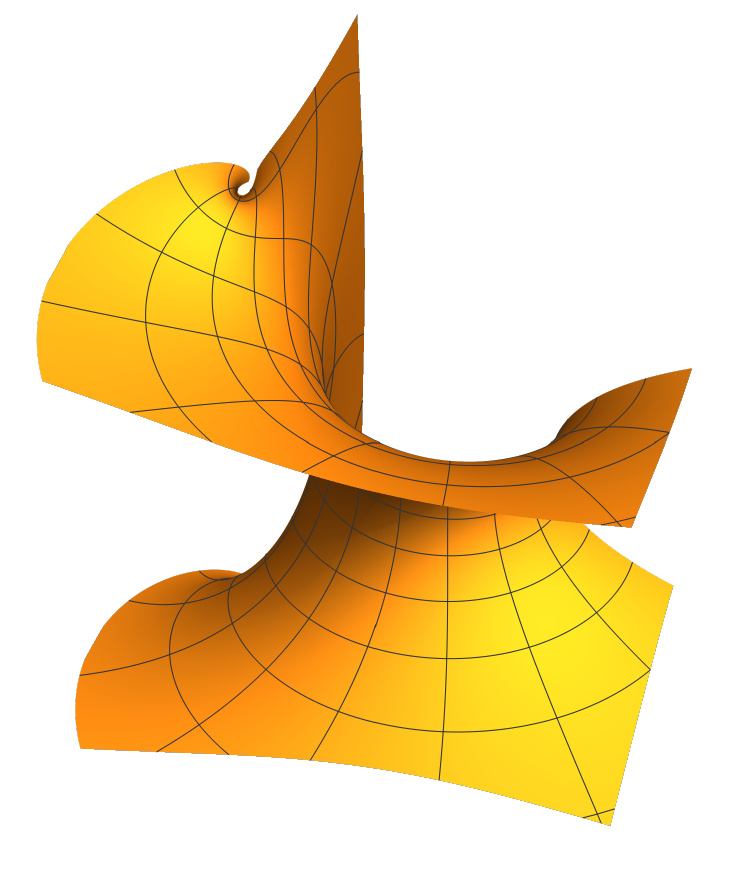}
		\caption{Half view}
		\label{fig:cutR31plane}
	\end{subfigure}
	\caption{Simple factor dressing of the doubly-wrapped catenoid in $\Ee^3$, as in example~\ref{ex:dressed-catenoid1plane} with $(p,q)=(1,1)$ and $(u,\ell)=(\frac{1}{2},1)$ (see also figure~\ref{fig:dressed-catenoids-R3}).}
	\label{fig:dressed-catenoidR31plane}
\end{figure}

\begin{remark}
	By computing explicitly the Gauss maps of the examples above, and with Corollary 1 of \cite{hertrich2017minimal}, one can show that the dressed catenoids are Darboux transformations of the standard catenoid.
\end{remark}

\bibliography{biblio}

\end{document}